\definecolor{Blue Sapphire}{HTML}{003050} 
\definecolor{Gamboge}{HTML}{ee9b00}
\definecolor{Ruby Red}{HTML}{9b2226}
\newcommand{\set}[1]{\{#1\}}
\title{Optimally Controlling a Random Population}
\author{Hugo Gimbert}{CNRS, LaBRI, Université de Bordeaux, France}{hugo.gimbert@labri.fr}{}{}{}
\author{Corto Mascle}{Max Planck Institute for Software Systems, Kaiserslautern, Germany}{cmascle@mpi-sws.org}{}{}{}
\author{Patrick Totzke}{University of Liverpool, UK}{totzke@liverpool.ac.uk}{}{}{}
\authorrunning{H.~Gimbert, C.~Mascle, P.~Totzke} 
\keywords{Controller synthesis, Parameterized verification} 
\theoremstyle{plain}
\begin{document}

\maketitle

\begin{abstract}
The population control problem is a parameterised problem
where a controller sends messages to a whole population of identical finite-state agents,
aiming to eventually move them all into a target state.
The decision problem asks whether this can be achieved
for \emph{arbitrarily large} finite populations.
We focus on the \emph{randomised} version of this problem,
where every agent 
is a copy of the same finite Markov Decision Process
and non-determinism in the {global} action chosen by the controller is resolved independently and uniformly at random.
Colcombet, Fijalkow and Ohlmann \cite{ColcombetFO21} showed that this problem is decidable, but without any complexity upper bound.

We show that the random population control problem is in fact \exptime-complete.


%
%



\end{abstract}

%



\section{Introduction}

We place $N$ tokens on the initial state of a non-deterministic finite automaton and update their positions in rounds, in each of which
a \Laetitia selects a letter $a$ and then every token moves along an $a$-labelled edge out of its current state.
The goal for the \Laetitia is to eventually synchronise all tokens, gathering them into accepting states at the same time.
Naturally, increasing $N$ can only make this task more difficult.
The decision question we study 
asks if, for a given automaton,
can the \Laetitia succeed for all values of $N$?

This population control problem{} has been studied both in the adversarial and random settings, which differ in how agents resolve choices and what guarantees the \Laetitia is after.
In the adversarial setting, an antagonistic environment resolves all agents' choices, trying to frustrate \Laetitia and avoid synchronisation.
Bertrand et al.~\cite{BertrandDGG17,lmcs:5647} showed that the adversarial population control problem is decidable and \exptime-complete.

In the random setting, all agents' choices are made uniformly at random and the \Laetitia aims to synchronise the agents almost surely, with probability one.

\begin{example}
	\label{ex:force-one}
	\begin{figure}[ht]
		\centering
		\begin{tikzpicture}[
		node distance=0.25cm and 1.5cm,
	]

	\node[state, initial] (Z)  {$s_0$};
	\node[state, right= of Z] (A)  {$s_1$};
	\node[state, below right= of A] (B)  {$s_3$};
	\node[state, above right= of A] (C)  {$s_2$};
	\node[state,accepting, above right= of B] (D) {$s_4$};

	\draw[->, bend right=20]
	(A) edge node[above] {$a$} (Z)
	(Z) edge node[below] {$a$} (A)

	(A) edge node[swap] {$b$} (B)
	(B) edge node[swap] {$b$} (D)
	;

	\draw[->, bend left=20]
	(A) edge node[] {$b$} (C)
	(C) edge node[] {$a$} (D)
	;

	\draw[->, thick, loop right] (D) edge node {$a,b$} 
	(D);
	\draw[->, thick, loop above] (A) edge node {$a$} (A);
	\draw[->, thick, loop above] (Z) edge node {$a,b$} 
	(Z);

\end{tikzpicture}
	\caption{An automaton for which \Laetitia wins against a random environment but not an adversarial one. 
	The initial and target states are $s_0$ and $s_4$, respectively.
Not shown, but implicitly present is a sink state, to which all states move on actions not shown
($b$ from $s_3$ and $a$ from $s_2$).}
		\label{fig-2}
		\label{fig:force-one}
	\end{figure}

	\Laetitia loses the adversarial population control game from \cref{fig-2}. For example, the opponent can win by keeping all tokens in the initial state in every round.

	However, \Laetitia does have a winning strategy for the corresponding random population control game:
	Regardless of the size $N$ of the population, she plays the action $a$ until exactly one token is in $s_1$,
	which happens eventually with probability $1$. She then plays $b$
	followed by either $a$ (if the isolated token moved to $s_3$) or $b$ (if the token moved to $s_2$) to send that token to the final state.
She can repeat this procedure until all tokens are in the target.
	Note that if instead she plays $b$ while more than one token is in $s_1$ then some of them may spread simultaneously to $s_2$ and $s_3$ and she is stuck: whatever action she plays, at least one token will move to the (invisible) sink state and cannot be recovered.
\end{example}

The random population control problem is \exptime-hard \cite{DBLP:journals/corr/abs-1909-06420} and decidable \cite{ColcombetFO21}.
Colcombet et al.'s decision procedure is based on two ingredients.
First, winning regions are downward-closed with respect to the natural product order on $\NN^S$
and can therefore be finitely represented and manipulated as a union of ideals. 
The second ingredient is solving the \emph{sequential flow problem}, which asks whether an unbounded number of tokens can be moved from one set of states to another, while respecting capacity constraints on the number of tokens occupying each transition. They show that one can compute the full corresponding pre-set of a given downward-closed set of target configurations,
by a reduction to the boundedness problem of {nested distance-desert automata}~\cite{DBLP:journals/ita/Kirsten05}.
Ultimately, this allows us to compute a representation of the winning region by iterative refinement.
The main limitation of this approach is that
the bounds on the representation of intermediate sets,
and termination of the resulting algorithm 
rely on well-quasi-orders and therefore only provide a non-elementary upper bound. 

\subparagraph*{Our Contribution}
%
%
We show that the random population control problem{} is \exptime-complete. 
As in \cite{ColcombetFO21}, our upper bound relies on symbolic representations.
However, instead of computing the whole winning region,
we show that one can witness positive instances
already with a potentially smaller, more compactly representable subset
(Theorem~\ref{thm:cortosconj}).
Computing this in exponential time requires solving
what we call the \problemm{},
which asks whether we can transfer arbitrarily many tokens,
with positive probability,
from an initial configuration to the set of final configurations
while staying in a given set of safe configurations.
We also include the original, previously unpublished lower bound \cite{DBLP:journals/corr/abs-1909-06420} (Theorem~\ref{thm:exp-hard}).

\medskip

\subparagraph*{Related work}
Models for biological, chemical, or computational systems 
with large crowds of simple finite-state components
include 
Petri nets~\cite{Petri62}, 
population protocols~\cite{AngluinADFP06},
or chemical reaction networks~\cite{SoloveichikCWB08}.
Parameterised verification ~\cite{SistlaG87,EmersonN95}
refers to a line of research that 
aims to verify distributed systems of arbitrary size.
Specifically, our work follows the line of population control problems, where the goal is to synchronise a large population of agents~\cite{UhlendorfMDCFBHB15,lmcs:5647, ColcombetFO21}. While our model considers a discrete set of agents, other works over-approximate the population as a probability distribution~\cite{AkshayGV18, Doyen23}.
Another model close to our setting is explorable automata~\cite{HazardIK24}, which generalise both (adversarial) population control problems
and {history-deterministic} automata on infinite words.

\section{Preliminaries}
\label{sec:prelims}




We assume familiarity with automata theory \cite{rozenberg1997handbook} and Markov Decision Processes \cite{Puterman:book}, and proceed to recall some necessary notations.
%


\newcommand{\arena}{\Gamma}

\newcommand{\config}{\Gamma}
%

\subparagraph*{Markov decision processes.}
\AP A ""Markov decision process"" (MDP) $\MM = (\states, \act, \prob)$
consists of a set $\states$ of states, $\act$ a set of actions, and a transition function $\prob : \states\times \act \to \Dist(\states)$,
where 
$\intro*\Dist(S)$ denotes the set of probability distributions over $S$.

We consider almost-sure reachability objectives, given by a set $F\subseteq \states$ of target states that \Laetitia aims to visit.
\AP A ""strategy"" is a function $\sigma : \states \to \Dist(\act)$.
Fixing such a strategy $\sigma$ and an initial state $s\in\states$
results in a Markov Chain with probability measure $\mathbb{P}_{\sigma,s}$ (see~\cite{Puterman:book} for details).
We call $\sigma$ ""winning@@strat"" from state $s$ if $F$ is reached $\mathbb{P}_{\sigma,s}$-almost surely.

\AP The ""winning region"" is the subset $W \subseteq \states$ of states from which a "winning strategy" exists.


\subparagraph*{Random walks in winning regions.}

In an MDP with finitely many states, if there is an almost surely winning strategy for \Laetitia, then there is a canonical one: play at random any action that guarantees staying in the winning region.
This is formalized using \emph{arenas} and \emph{safe random walks}, as follows.

%
{

\AP 
We call an MDP 
""simple"" if for all $s \in \states$, the set of states reachable from $s$ is finite.
A\,""commit"" is an element of $\states\x\act$. An ""arena"" is a set $\arena \subseteq \states \times \act$ of commits such that for all $(s,a) \in \arena$
and $t \in \states$, if $\prob(s,a)(t) > 0$ then there exists $b \in \act$
such that $(t,b) \in \arena$.
For brevity we will write $s \in \arena$ instead of $\exists b \in \act, (s,b) \in \arena$.
\AP 
A ""path"" in "arena" $\arena$ is a sequence $s_0 \xrightarrow{a_1} s_1 \xrightarrow{a_2} \ldots \xrightarrow{a_k}s_k \in S(\act S)^*$ such that $(s_{j-1},a_j) \in \arena$ and $\prob(s_{j-1},a_j)(s_j)> 0$ for all $j>0$.
A ""safe random walk"" in the "arena" $\arena$ is a strategy $\sigma$ defined such that $\sigma(s)$ is the uniform distribution on $\{a\in\act \mid (s,a) \in \arena\}$.
An "arena" is ""winning@@arena"" (with respect to $F$) if from every $s \in \arena$ there is a path in $\arena$ to $F$. This is closely linked with "winning region"s, as shown in the next lemma. A proof is in Appendix~\ref{app:winning-arenas}.
}

\begin{restatable}[Winning arenas]{lemma}{WinningArenas}
\label{lem:winningarena}
	Given a "simple" MDP and a set of target states $F$,
	\begin{enumerate}
		\item A union of "winning arenas" is a "winning arena", and the "winning region" is the projection onto $\states$ of the largest "winning arena".
		
		\item In a "winning arena", a "safe random walk" is a "winning strategy" from every state.
	\end{enumerate}
\end{restatable}



\subparagraph*{Random Populations.}
We consider \emph{populations} of agents (or \emph{tokens}), described by a common finite MDP $\?M=(\states, \act, \prob)$.
%
Our arguments require tracking the trajectory of selected (sets of) tokens along paths 
and the following definitions will be convenient for this purpose.

Write $T_\infty$ for the countably infinite set of tokens.
For any finite subset $T\subseteq T_\infty$,
let
$\?M^{T)}=(\states^{T},\act,\prob^{T})$
denote the MDP
whose states (called \emph{configurations}) are functions mapping each token of $T$ to a state of $\states$,
and $\prob$ is naturally lifted to $\states^{T}$:
$\prob^{T}(\gamma,a)(\gamma')=\prod_{t \in T}\prob(\gamma(t),a)(\gamma'(t))$
for all $\gamma, \gamma' \in \states^{T}$ and $a\in\act$.
For a subset $R\subseteq\states$,
$R^{T}$ contains those configurations where all tokens are mapped to states in $R$. In particular,
$\set{s}^T$ contains only the configuration mapping all tokens to $s\in S$,
and $F^{T}$ is the set of final configurations, mapping all tokens to some state of $F$.
%
\AP We study the following decision problem,
which asks if for every finite population $T$,
\Laetitia can almost surely ensure that all tokens simultaneously\footnote{Requiring all tokens to visit a final state \emph{simultaneously} or possibly at different times is irrelevant for our purposes:
	The two variants are logspace inter-reducible (see \cref{lem:simultaneously} in Appendix~\ref{app:winning-arenas}).
}
reach a final state.
\begin{center}
	\fbox{
		\begin{minipage}{38em}
			{\bf The \intro*\problem}\\
			\textbf{Given:}
			an MDP $\?M=(\states, \act, \prob)$, initial state $i\in\states$ and target set $F\subseteq\states$.\\
			\textbf{Question:}
			Is there a "winning strategy" to reach $F^{T}$ from $\set{i}^{T}$, for all finite $T \subseteq T_\infty$?
		\end{minipage}
	}
\end{center}

We define 
$\?M^{*}=\bigcup_{T \subseteq_f T_\infty} \?M^{T}$ as the disjoint union of all finite $\?M^{T}$.
%
Notice that, while the state space $\states^{*} =\bigcup_{T \subseteq_f T_\infty}\states^{T}$ is infinite, each $\states^{T}$ is finite and therefore $\?M^{*}$ is "simple".

According to \cref{lem:winningarena}, a given MDP $\?M$ is a positive instance if, and only if,
there is a winning arena $A \subseteq \states^{*}\x\act$ with
$\set{i}^{T}\subseteq A$ for all $T\subseteq_f T_{\infty}$.
Our main decidability result relies on identifying such a witness $A$.
By \cref{lem:winningarena}, the existence of such a witness $A$ does not depend on
the precise probabilities in $\prob$.
We will omit them in all examples
and just assume without loss of generality that all probability distributions are uniform.

%
\color{black}

\section{Symbolic Configurations}%
\label{sec:symbolic}
%

%
%

We first observe that the maximal "winning arena" $W$ of $\?M^{(*)}$ must be closed under renaming of tokens.
That is, for any bijective $\tau:T\to T'$ and configurations
$\gamma \in \states^{T}$ and $\gamma' \in \states^{T'}$,
if $\gamma = \gamma'\circ \tau$ then either both
$\gamma$ and $\gamma'$,
or neither is in $W$.
Furthermore, $W$ must be closed under removal of tokens:
if $W$ contains $\gamma\in\states^{T}$ and $T'\subseteq T$ then $W$ must also contain
the restriction of $\gamma$ to $T'$. 
Both these can be seen by simple transfers of winning strategies.

\newcommand{\Config}{\states^{T}}
The winning region can therefore conveniently be represented as a downward-closed set of $\abs{\states}$-dimensional vectors of integers, each of which just reflects how many tokens occupy each state.
Given a configuration $\gamma \in \Config$, we write $\intro*\counttokens{\gamma} $ for the vector of $\NN^S$ such that $\counttokens{\gamma}(s)$ is the number of tokens in $s$ in $\gamma$, for all $s \in S$.

Take
\(
\Nb = \{0 < 1<2<3<\ldots< \omega\}
\),
the non-negative integers with natural ordering extended by a maximal element $\omega$.
This ordering is lifted pointwise to
$\Nb^\states$ 
, whereby
$\vectv \leq \vectv'$ if and only if $\vectv(s) \leq \vectv'(s)$ for all $s \in \states$,
and $(\vectv,a) \leq (\vectv',a')$ if and only if $\vectv \leq \vectv'$ and $a=a'$.

\begin{definition}[Symbolic representations]
	\label{def:symbolic}
\AP A ""symbolic configuration"" is a vector $\vectv \in \Nb^S$. 
Given a "symbolic configuration", the ""ideal""  $\intro*\ideal{\vectv}$ is the set of all configurations $\gamma$ such that
$
\counttokens{\gamma} \leq \vectv.
$
For a set of "symbolic configurations" $\vectV$, we write $\ideal{\vectV}$ for the set $\bigcup_{\vectv \in \vectV} \ideal{\vectv}$.

Similarly, a ""symbolic commit"" $(\vectv, a)$ is a symbolic configuration $\vectv \in \Nb^S$ together with an action $a \in \Sigma$. 
The ""commit ideal"" $\ideal{(\vectv, a)}$ is the set $\set{(\gamma,a) \mid \gamma \in \ideal{\vectv}}$.
\end{definition}

We recall that the product ordering on commit ideals is a well-quasi-order. Every downward-closed set $A\subseteq\Nb^S\x\act$, including our maximal "winning arena", can therefore be written as a union $A=\bigcup_{0<i<m}\ideal{(\vec{v}_i,a_i)}$ of finitely many incomparable ideals \cite{schmitz:tel-01663266}.
We call such a finite union of commit ideals a ""population arena"", or simply an arena when clear from the context.
Throughout this work we will make use of the notion of \emph{$K$-definability}, that bounds the maximal finite constants in such ideal representations of population arenas.

\begin{definition}[$K$-definability]
	\label{def:definability}
	Let $K\in\NN$.
	A set $\arena$ of commits
	is ""$K$-definable"" if it is a finite union of "commit ideals"  of the form $(\ideal{\vectv},a)$ with $\vectv \in \set{0, \ldots, K, \omega}^S$.
The maximal $K$-definable subset of $\arena$
is denoted by
\(
\intro*\BoundK{\arena}{K}.
\)
\end{definition}


%
%
\begin{example}
	In the MDP from \cref{ex:force-one}, the maximal "winning arena" is 
	\[
		W = \ideal{((\omega, \omega, \omega, 0, \omega), a)}  
		~\cup~ \ideal{((\omega, 1, 0, \omega, \omega), b)}	      
	\]
	with states enumerated as $(s_0, \dots , s_4)$.
	A "safe random walk" in $W$ instructs to
	play, with positive probability, the action $a$ in any configuration where no tokens are on $s_3$.
	Further, to play $b$ with positive probability if at most one token is on $s_1$ and none on $s_2$.
	This set is clearly "$1$-definable" and therefore, $W = \BoundK{W}{1}$.  
	Its maximal "$0$-definable" subset is
	\[
		\BoundK{W}{0} = \ideal{((\omega, \omega, \omega, 0, \omega), a)}
		~\cup~ \ideal{((\omega, 0, 0, \omega, \omega), b)}
	\]
	While $\BoundK{W}{0}$ is still an "arena" (it is a forward invariant), it is not "winning@arena" because
	there is no path in it from initial to target configurations. This is because the action $b$ that takes tokens to $s_2$ or $s_3$ is never played when there are tokens on $s_1$.
\end{example}


 
 \section{Solving the {Random Population Control Problem}}
We present our main result: the \problem\ is solvable in exponential time.
According to \cref{lem:winningarena}, there is a maximal "winning arena" containing all configurations from which there is a "winning strategy". 
As a consequence, there is a "winning strategy" from all initial configurations if and only if there is a "winning arena" which contains them all.

Thus, all positive instances are witnessed by a set of commits that is
1) an "arena" (forward-closed),
2) "winning@@arena" (every included configuration can reach a target configuration without leaving this set),
and finally, 3) includes all initial configurations.

We will symbolically represent candidate sets as per \cref{def:symbolic}, as finite unions of "commit ideals" $Y=\bigcup_{0<i<m}\ideal{(\vec{v}_i, a_i)}$.
This makes it straightforward to check conditions 1) and 3) syntactically.
The two main remaining ingredients for our approach are \cref{thm:cortosconj}, that positive instances admit polynomially-definable winning arenas; and \cref{thm:pspace-path-problem}, that one can check condition 2) in exponential time (and even polynomial space).
We formally state these theorems below and derive an algorithm to solve the \problem{} in exponential time.
The following sections contain proofs.

We will symbolically represent candidate winning sets as per \cref{def:symbolic}, as finite unions of "commit ideals" $Y=\bigcup_{0<i<m}\ideal{(\vec{v}_i, a_i)}$.

In what follows, fix an MDP $\?M=(\states, \act, \prob)$, initial state $i\in\states$ and target set $F\subseteq\states$.
Let $W$ denote the "winning arena" in $\?M^{(*)}$, and recall that for every $K\in \NN$, 
\(
\BoundK{W}{K}
\)
is its maximal "$K$-definable" subset.
Let $\textbf{i},\vec{f}\in\Nb^\states$ 
be the "symbolic configuration"s mapping the state $i$ (resp. all states in $F$) to $\omega$ and other states to $0$. 
Then in particular, $\ideal{\vec{f}}$ is the set of configurations whose tokens are all in $F$.  
Observe that, since all those configurations are in the "winning region" by definition, we have $\ideal{\textbf{f}} \subseteq \BoundK{W}{0}$
and
the answer to the \problem\ is positive if and only if $\BoundK{W}{0}$ contains all configurations with all tokens on the initial state, i.e., $\ideal{\textbf{i}} \subseteq \BoundK{W}{0}$.


\begin{restatable}{theorem}{MainThm}
	\label{thm:cortosconj} 
	Let $W$ be the maximal "winning arena".
	There is a "winning arena"
	$Y \subseteq W$
	with
	\begin{itemize}
		\item
		$Y$ contains $\BoundK{W}{0}$; and
		\item
		$Y$ is "$|S|$-definable".
	\end{itemize}
\end{restatable}


\begin{remark*}
	Theorem~\ref{thm:cortosconj} implies that,
	if we can control arbitrarily many tokens then we can do so 
	with a "safe random walk" in some "$\card{\states}$-definable" "arena".
	This has no immediate consequences for the shape of the 
	"winning region" $W$.
	In fact, $W$
	may not be "$|S|$-definable" and its "ideal" representation may require doubly exponentially large constants
	(cf.~Section~\ref{app:example-2exp-cutoff}).
	Therefore, in general, $Y$ is strictly contained in $W$.
	Moreover, despite there existing memoryless and deterministic strategies
	from every configuration in the "winning region" \cite{Ornstein:AMS1969},
	the "$\card{\states}$-definable" "winning strategies" guaranteed by Theorem~\ref{thm:cortosconj} may still require randomization (cf.~Section~\ref{sec:butterfly}).
\end{remark*}

To verify that a candidate arena 
is "winning@arena", we need to be able to check that all configurations in it can reach a final one while staying in the arena.
We show (in \cref{sec:seq-flows}), how to
solve the following problem\footnote{
	This is a slightly different presentation of the ``sequential flow problem'' in \cite{ColcombetFO21}. They presented a solution in exponential space, which was recently improved to polynomial space
	\cite{GimbertMT25}.
}.
\AP
\begin{center}
	\fbox{
		\begin{minipage}{\textwidth-1em}
		{\bf The \intro*\problemm}\\
			\textbf{Given:}
			an MDP
			$\?M=(\states, \act, \prob)$,
			a finite set of "symbolic commits" $\vectV$,
			a "symbolic configuration" $\vectv_0 \in \vectV$ and a set of "symbolic configurations" $\vectF$.
			
			\textbf{Question:}
			Does every
			configuration in $\ideal{\vectv_0}$,
			admit a path to $\ideal{\vectF}$
			inside
			$\ideal{\vectV}$?
		\end{minipage}
	}
\end{center}

\begin{theorem}
	\label{thm:pspace-path-problem}
	The \problemm{} can be solved in polynomial space in $\log(K)$ and $|S|$, where $K$ is the largest integer appearing in the input.
\end{theorem}

Together, \cref{thm:cortosconj,thm:pspace-path-problem} suggest a simple algorithm to compute a suitable "winning@arena" arena $Y$
by refining a candidate set $\vectV$ of "symbolic commits" until the corresponding union of "ideals" is a "winning arena".
Recall that given a set of commits $\arena$ we use the notation $\gamma \in \arena$ for $\exists b \in \Sigma, (\gamma,b) \in \arena$. 
Similarly, given a set of configurations $C$ and a set of commits $\arena$, we write $C \subseteq \arena$ instead of $\forall \gamma \in C, \exists a \in \Sigma, (\gamma,a) \in \arena$.



\newcommand{\OLY}{\vectV}

\begin{algorithm}[ht]
	\caption{Algorithm for the \problem}\label{alg:EXPSPACE}
	\begin{algorithmic}[1]

		\State{$\OLY \gets {\set{0,\ldots, |S|, \omega}}^S \times \act$}  \Comment{start with maximal $\abs{\states}$-definable candidate}

		\Repeat
		\If{$\exists (\vectv,a) \in \OLY,  \gamma_1 \in \ideal{\vectv}$, $\gamma_2 \notin \ideal{\OLY}$ s.t.~$\Delta(\gamma_1,a)(\gamma_2)>0$}
		\State $\OLY \gets \OLY \setminus \set{(\vectv, a)}$
	\Comment{reduce if not an arena}
		\EndIf
		\If{$\exists (\vectv,a)\in \OLY$, $\gamma \in \ideal{\vectv}$ s.t.~there is no path from $\gamma$ to $\ideal{\textbf{f}}$ in $\ideal{\OLY}$}
		\State $\OLY \gets \OLY \setminus \set{(\vectv, a)}$
		\Comment{reduce if not a winning arena}
		\EndIf
		\Until{$\OLY$ does not change}
		\State \Return {$\textbf{i} \in \OLY$}

	\end{algorithmic}
\end{algorithm}

%
%
%
%
%
%
%
%


The idea is to compute a "$\abs{\states}$-definable" "winning arena" $Y$ as guaranteed by \cref{thm:cortosconj} by refinement. 
Start with the largest such candidate set (line 1),
and reduce 
it if it is not an "arena" (lines 3 and 4)
or if it is not a "winning@arena" one (lines 5 and 6).
Computing ideal representations for reduced candidate sets, and the check in line 3, are syntactic and therefore simple. The check in line 5 is an instance of the \problemm.
Once the procedure stabilizes, $\ideal{\vec{V}}$ is a winning arena that satisfies the conditions of \cref{thm:cortosconj}.
In particular, it contains all of $\BoundK{\arena}{0}$.
The input therefore describes a positive instance if and only if $\vec{i} \in \vec{V}$, the output of the algorithm in line 8.

Observe that $\vectV$ is deliberately initialised as the set of \emph{all} "symbolic configurations" that may appear in the decomposition of $Y$, and therefore only reduces during the computation.

%

\begin{theorem}\label{thm:main}
	The \problem\ is \exptime-complete.
\end{theorem}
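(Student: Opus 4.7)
The plan is to combine the ingredients already laid out in this section: the lower bound comes directly from Theorem~\ref{thm:exp-hard}, so the real work is to justify that Algorithm~\ref{alg:EXPSPACE} runs in single-exponential time in $|S|+|\Sigma|$. Correctness is already handled by Lemma~\ref{lem:alg-correct}, so I would structure the proof purely around a complexity analysis of the algorithm.

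First I would bound the number of iterations of the main loop. The candidate set $V$ lives in $\{0,\ldots,|S|,\omega\}^{S}\times\Sigma$, whose cardinality is $(|S|+2)^{|S|}\cdot|\Sigma|$, i.e.\ exponential in the input. Because each iteration that modifies $V$ strictly decreases it, the loop terminates after at most exponentially many passes; detecting the stopping condition $V_{i}=V_{i-1}$ costs no more than comparing two such sets.

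Second, I would argue that each of the two tests inside the loop is executable in exponential time. The test on Line~3 is handled by the auxiliary Lemma~\ref{lem:condition-1} cited in Appendix~\ref{app:main}, which bounds it by $\exp(|S|)$. The test on Line~5 is an instance of the \problemm\ over the $|S|$-definable set $\ideal{V}$; its largest constant is $|S|$, so it is not immediately of the form handled by Theorem~\ref{thm:problem-one}. This is where I would apply Lemma~\ref{lem:1toS} to reduce it, in polynomial time under unary encoding, to a \problemmone\ instance over a state space polynomially larger than $|S|$. Theorem~\ref{thm:problem-one} then solves that instance in time exponential in the enlarged state space, hence still exponential in $|S|$. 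Multiplying exponentially many iterations by exponential per-iteration cost remains exponential, which together with Lemma~\ref{lem:alg-correct} gives the \exptime\ upper bound. Combined with Theorem~\ref{thm:exp-hard}, this proves the claimed completeness.

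The main subtlety I would want to check carefully is the interaction between Lemma~\ref{lem:1toS} and the Line~5 subroutine call: the input to that call is $\ideal{V_i}$ together with the target $\ideal{\Fb}$, both represented by symbolic commits over $\{0,\ldots,|S|,\omega\}$. One must make sure that feeding this representation through Lemma~\ref{lem:1toS} really does yield an instance of polynomial size and that the resulting largest constant is $1$, so that Theorem~\ref{thm:problem-one} is directly applicable; this is the only non-routine step, everything else being a straightforward accounting argument.
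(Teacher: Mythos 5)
Your proposal is correct and follows essentially the same route as the paper: hardness from Theorem~\ref{thm:exp-hard}, correctness from Lemma~\ref{lem:alg-correct}, an exponential bound on the number of iterations of Algorithm~\ref{alg:EXPSPACE}, Lemma~\ref{lem:condition-1} for line~3, and Lemma~\ref{lem:1toS} combined with Theorem~\ref{thm:problem-one} for line~5. The subtlety you flag about the reduction from largest constant $|S|$ to constant $1$ is exactly the point the paper's argument also rests on, and it does go through as you anticipate.
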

\begin{proof}
\exptime-hardness is shown as \cref{thm:exp-hard}.
The matching upper bound is provided by \cref{alg:EXPSPACE}:
The full proofs that this is effective and correct are in Appendix~\ref{app:alg}.
It remains to argue that it takes at most exponential time.	
	\begin{itemize}
		\item $\vectV$ has $(|S|+2)^{|S|}|\Sigma|$ elements at the start, and every
			iteration removes at least one.
		
		\item The first condition (line 3) can be checked simply by computing the set of successors of each symbolic commit in $\vectV$.
			This can be straightforwardly done in exponential time in the number of states (\cref{lem:condition-1}).
		
		\item Checking the second condition (line 5) is solving an instance of the \problemm\ with largest constant $\leq |S|$, for each $(\vectv, a)$ in $\vectV$. This problem is solvable in polynomial space (and thus exponential time) in $|S|$ (\cref{sec:seq-flows}).
	\end{itemize}
	Therefore, Algorithm~\ref{alg:EXPSPACE} takes exponential time in the size of the input.
\end{proof}


\section{Theorem 6: Small Winning Arenas}
\label{sec:proof-main-thm}
%
%
%
Consider now a positive instance of the \problem{} with
maximal winning arena $W$.


Towards proving \cref{thm:cortosconj}, we 
now describe a "winning strategy" for \Laetitia that
remains in, and thus defines a suitable "winning@@arena" sub-arena $Y \subseteq W$  which satisfies the claim of the theorem. As discussed previously, the inclusion $Y \subseteq W$ might be strict.

We consider the tokens of any configuration
to be split between those that are part of the 
large \emph{cohort} and a few
\emph{individuals} that are tracked separately.
%
Increasing the number of tokens in states occupied by the cohort cannot result in configurations outside the winning region (see \cref{def:omega-base}).
By contrast, increasing the number of individuals on a state may turn a winning configuration into a losing one.

The strategy operates in two modes.
The first mode attempts to follow 
a ``lucky'' path in $W$ that leads to a final configuration:
one where at any step only a few tokens
leave the cohort
and those that do, subsequently never meet until they rejoin the cohort.
For instance, in \cref{ex:force-one} we would try sending tokens one by one via $s_0\to s_1\to s_2 \to s_4$, while keeping all others in their current states.
Formally, this means that the path stays within $\BoundK{W}{1}$, and therefore introduces at most $|S|$ individual tokens.
The existence of such paths is guaranteed by \cref{lem:onlyone}.
Successfully following such a path to the end has a positive probability that is bounded away from zero (for a given number of tokens).
%
%



%

In case the ``lucky''
path is exited prematurely, our strategy enters a second, recovery mode that, almost surely, brings the individual tokens back into the large cohort
(reaches $\BoundK{W}{0}$) in order to attempt a new ``lucky'' path to a final configuration again. See \cref{fig:sketch} for an illustration.
This must be possible because we have not left the maximal winning arena $W$, and therefore can still almost surely reach a final configuration in $\ideal{\vec{F}}\subseteq\BoundK{W}{0}$.
We again attempt to follow another ``lucky'' path towards this new target, using \cref{lem:onlyone}, thereby creating up to $|S|$ additional individuals along the way.
Again, following this path succeeds with probability bounded away from zero.

The issue with this approach is that we may continue to be unlucky
and track ever more individual tokens. 
However, as we show in \cref{lem:isolation},
one can play as described above while ensuring that 
\emph{individuals introduced at different steps never meet} before they are brought back to the cohort. 
As each step produces at most $|S|$ new individual tokens (necessarily on distinct states), we can see at most $|S|$ individual tokens in each state overall.


\begin{figure}[ht]
	\centering
	\tikzset{every state/.style={
            thick,
            fill=black!4,
            rounded corners=1mm,
            minimum size=1.5mm,inner sep=0.5mm,
}}

\tikzset{anode/.style={
        font=\small,
}}
\tikzset{estate/.style={state,rectangle,font=\scriptsize}}

 \definecolor{col1}{HTML}{FA6687} 
 \definecolor{col2}{HTML}{1E88E5} 
 \definecolor{col3}{HTML}{FFC107} 
 \definecolor{col4}{HTML}{A3E4D9} 

\newcommand{\HL}[2]{{\color{#1}{{#2}}}}
\begin{tikzpicture}[>=latex',shorten >=1pt,node distance=1.9cm,on grid,auto,
		lost/.style={densely dotted, ->, line width=1.5pt},
	]
        \coordinate (diff) at (1mm,1mm);
        \coordinate (tr0) at ($(35mm,35mm)-3*(diff)$);
        \coordinate (tr1) at ($(65mm,35mm)-2*(diff)$);
        \coordinate (tr2) at ($(90mm,35mm)-1*(diff)$);
        \coordinate (tr3) at ($(105mm,35mm)$);
        \coordinate (trS) at ($(\textwidth,35mm)+1*(diff)$);
	\draw[rounded corners, thick] (0, 0) rectangle (trS);
	\draw[color=col3,rounded corners, thick] ($(diff)$) rectangle (tr3);
	\draw[color=col4,rounded corners, thick] ($2*(diff)$) rectangle (tr2);
	\draw[color=col2,rounded corners, thick] ($3*(diff)$) rectangle (tr1);

	 \node (dots) at (11cm,1.5cm) {$\cdots$};

	\begin{scope}
	    \clip[rounded corners] ($4*(diff)$) rectangle (tr0);
		\fill[fill=black, opacity=0.1] (0,6) circle (3.6cm);
		\fill[fill=black, opacity=0.2] (1,-2) circle (3cm);
	\end{scope}
	\draw[rounded corners, thick] ($4*(diff)$) rectangle (tr0);

	 \node at (tr0) [anchor=north east]   {$\BoundK{\arena}{0}$};
	 \node at (trS) [anchor=north east]   {$Y = Y_{|S|}\subseteq W $};

	 \node (iF) at (1,7mm)  {$\ideal{\vec{f}}$};
	 \node (iI) at (tr0) [xshift=-2cm,anchor=north east]   {$\ideal{\vec{i}}$};

	 \node[state] (p0) at (15mm,30mm) {};
	 \node[estate] (p1) at (55mm,15mm) {$\omega,\HL{col2}{2},0,\omega,0$};
	 \node[estate] (p2) at (23mm,23mm) {$0,\omega,0,\omega,0$};
	 \node[estate] (p3) at (21mm,12mm) {$0,0,\omega,\omega,0$};
	 \node[state] (p4) at (15mm,5mm) {};

	 \node[estate] (p3') at (42mm,25mm) {$\omega,\HL{col2}{1},\HL{col2}{1},\omega, 0$};
	 \node[estate] (p4') at (42mm,10mm) {$0,\HL{col2}{1},\omega,\HL{col2}{1}, \omega$};

	 \node[estate] (p3'') at (72mm,24mm) {$\omega,\HL{col4}{1},\HL{col2}{2},0,0$};
	 \node[estate] (p4'') at (71mm,5mm) {$\HL{col4}{1},\HL{col2}{2},\omega,\HL{col4}{1},\omega$};

	 \node[estate] (p5) at (80mm,15mm) {$\HL{col4}{2},\omega,\HL{col2}{2},0,\omega$};
	 \node[estate] (p7) at (97mm,20mm) {$\HL{col2}{2},\HL{col4}{2},\HL{col3}{1},\omega,\HL{col3}{1}$};
	 
	 \draw[->]
	     (p0) edge (p2)
	     (p2) edge (p3')
	     (p3) edge (p4)
	     ;
	 
	 \draw[->]
	 (p3') edge[col1] node[yshift=-3pt]  {\footnotesize!} (p1) 
	 (p1) edge (p3'')
	     (p3') edge (p4')
	     (p4') edge (p3)
	     ;
	  \draw[->]
	      (p3'') edge (p4'') 
	      (p4'') edge (p4')
	      (p4'') edge[col1] node[yshift=-3pt] {\footnotesize !} (p5)
	      (p5) edge (p7)
	      ;
\end{tikzpicture}
	\caption{Controller tries to follow the black path from $\ideal{\vec{i}}$ to $\ideal{\vec{F}}$, along which individual tokens (in blue) may be produced temporarily yet never meet in the same state.
		If this path is exited (red arrow) those individuals can meet,
		in which case we leave $\BoundK{W}{1}$ as here.
		We then attempt to recover by following a new black path towards 
		$\BoundK{W}{0}$ which can spawn new individual tokens (as here, in blue-green) and so on.
		\cref{lem:isolation} guarantees that tokens of different colors never meet 
		and therefore that there are no more than $|S|$ layers.
		We define $Y$ as the union of all those layers. 
	}
	\label{fig:unlucky-paths}
	\label{fig:sketch}
\end{figure}

%

We will need some notation to formalize our argument.
In particular, we extend all previous definitions to (symbolic) configurations
that explicitly track finitely many individual tokens.
We thus extend the symbolic representations in \cref{def:symbolic}
accordingly.


\begin{definition}	
	\label{def:symbolic-tracking}
	Let $T_f$ be a finite set of tokens, called individuals, whereas other tokens in $T_\infty \setminus T_f$
	are called the cohort tokens.
	Given a "symbolic configuration" $\vectv \in \Nb^S$ and a configuration $\gamma_f : T_f \to S$, the ""ideal tracking $T_f$"" generated by $\vectv$ and $\gamma_f$ is written $\intro*\trackideal{\gamma_f}{\vectv}$ and defined as the set of configurations $\gamma : T \to S$ such that:
	\begin{itemize}
		\item individuals are placed on states according to $\gamma_f$, i.e. 
		$\forall t \in T_f, \gamma (t) = \gamma_f (t)$.
		
		\item the number of cohort tokens on a state is bounded from above by $\vectv$, i.e. $\forall s \in S, |\{ t \in T\setminus T_f, \gamma(t)=s\}| \leq \vectv(s)$.
	\end{itemize}
	
	Similarly, given an action $a \in \Sigma$, the ""commit ideal tracking $T_f$"" associated with $\vectv$, $\gamma_f$ and $a$ is denoted $\intro*\trackcommit{\gamma_f}{\vectv}{a}$, and is the set of commits $(\gamma, a)$ with $\gamma \in \trackideal{\gamma_f}{\vectv}$.
	A ""population arena tracking $T_f$"" is a finite union of "commit ideals tracking $T_f$".
	
	
	Let $\arena$ be an "arena"
	and $K \in \NN$.
	Then
	\(
	\intro*\BoundKT{\arena}{K}{T_f}
	\)
	denotes the union of all "ideals tracking $T_f$" of the form $\trackideal{\gamma_f}{\vectv}$ with $\vectv \in \set{0,\ldots, K,\omega}^S$ that are included in $\arena$.
\end{definition}








\subsection{The Existence of Lucky Paths\label{sec:luckyplays}}


We start from a fairly simple idea: Suppose that we are able to transfer arbitrarily many tokens from one state to another along paths that remain in the "winning region".
It may be necessary to transfer those tokens in small groups, as they have to go through a bottleneck. In this case, we might as well transfer them one by one.


%
%

\begin{restatable}{lemma}{FunnelingTheHerd}\label{lem:onlyone}
	Let $T_f$ be a finite set of tokens,  $\vectF$ an "ideal tracking $T_f$" and $\arena$ a "population arena tracking $T_f$".
	If $\arena$ is a "winning arena" with respect to $\vectF$, then for every configuration $\gamma_0 \in \BoundKT{\arena}{0}{T_f}$ 
	there is a path in $\BoundKT{\arena}{1}{T_f}$ from $\gamma_0$ to $\vectF$.
\end{restatable}

\begin{proof}[Sketch of proof]
	The full proof is presented in Appendix~\ref{app:funneling}. 
	As $\arena$ is a "population arena tracking $T_f$", by definition it is the union of finitely many "commit ideals tracking $T_f$".
	Let $B$ be the highest finite number used in the "symbolic commits" defining those "ideals".
	
	By definition, $\BoundKT{\arena}{0}{T_f}$ is also a finite union of "commit ideals tracking $T_f$". Let  $ \trackcommit{\gamma_f}{\vectv}{a}$ with $\vectv \in \set{0, \omega}^S$ be one of them.
	Define $\trackconf{\gamma_f}{\vectv}[N]$ as a configuration obtained by taking $\gamma_f$ and adding $N$ tokens on each state such that $\vectv(s) = \omega$. 
	Since all configurations of $ \trackideal{\gamma_f}{\vectv}$ can be obtained from one of the form $\trackconf{\gamma_f}{\vectv}[N]$ 
	by deleting and renaming tokens, we only need to show that for all $N$ we have a path from $\trackconf{\gamma_f}{\vectv}[N]$ to $\vectF$ in $\BoundKT{\arena}{1}{T_f}$. Let $d$ be the number of states $s$ such that $\vectv(s) = \omega$.
	
	We will use the fact that we have paths from $\trackconf{\gamma_f}{\vectv}[N]$ for arbitrarily large $N$, while there is a uniform bound $B$ on the constraints defining $\arena$.
	Intuitively, if we are able to transfer arbitrarily many tokens through a ``bottleneck'' of size $B$, then we can also transfer them one by one: essentially, we take the path used for $BN$ tokens and turn it into a path for $N$ tokens where they go through bottlenecks one by one.
	The transformation is done by interpreting the movement of tokens as a flow through a graph, where "commit ideals" are interpreted as capacities.
	More formally, let $N \in \NN$. We have $\trackconf{\gamma_f}{\vectv}[B \cdot N] \in \trackideal{\gamma_f}{\vectv} \subseteq \arena$, hence there is a path $\gamma_0 \xrightarrow{a_1} \cdots \gamma_n$ from $\gamma_0 = \trackconf{\gamma_f}{\vectv}[B\cdot N]$ to $\vectF$ in $\arena$.
	We interpret it as a directed graph $G$ whose set of vertices is $S \times \{0,\ldots, n\}$, plus a source and a target vertex.
	In $G$, there is an edge from $(s, j)$ to $(s',j+1)$ whenever there exists a token $t \notin T_f$ such that $\gamma_{j}(t) = s$ and $\gamma_{j+1}(t) = s'$.
	
	We assign capacities to all vertices of $G$ according to the "commit ideals" of $\arena$ the associated commits belong to. Those capacities are in $\set{0, \ldots, B, \omega}$. 
	The trajectories of tokens outside $T_f$ in the path define a flow in $G$ of value $dBN$. 
	We replace every positive finite capacity in $G$ by $1$ and use the max-flow min-cut theorem to show that the new graph has a flow $\geq dN$. we show that this flow defines a path from $\trackconf{\gamma_f}{\vectv}[N]$ to $\vectF$ in $\BoundKT{\arena}{1}{T_f}$.
\end{proof}


\subsection{The Isolation Lemma\label{ref:isolation}}

In a configuration, we say that a set of states is an \emph{$\omega$-base} if
an arbitrary amount of extra tokens could be placed 
on these states without exiting the arena $\arena$.
This is formally defined as follows.
\begin{definition}[$\omega$-base and finite base]
	\label{def:omega-base}
	Fix an "arena" $\arena$
	and $\gamma:T\to \states$ a configuration in $\arena$ over a set of tokens $T$.
	A set of states $S_\omega$ is an ""$\omega$-base"" of $\gamma$ in $\arena$ if
	\[
	\ideal{\gamma[{S_\omega * \omega}]} \subseteq \arena\enspace.
	\]
	with $\gamma[{S_\omega * \omega}]$ the "symbolic configuration" obtained from $\counttokens{\gamma}$ by mapping states of $S_\omega$ to $\omega$.
	
	By extension, a set of tokens $T_\omega \subseteq T$ is an \reintro{$\omega$-base} of $\gamma$ in $\arena$ if the set
	of states occupied by those tokens in $\gamma$ is.
	Dually, a set of tokens $T_f \subseteq T$ is a ""finite base"" of $\gamma$ in $\arena$
	iff $T \setminus T_f$ is an "$\omega$-base" of $\gamma$ in $\arena$.
\end{definition}

Note that a configuration $\gamma$ may have several "$\omega$-bases" and "finite bases": 
for instance, say we have two states $s_1, s_2$ and the "arena" $\Gamma$ is $\ideal{(\omega, 1)} \cup \ideal{(1,\omega)}$, then the configuration $[t_1 \mapsto s_1, t_2 \mapsto s_2]$ has $\set{t_1}$ and $\set{t_2}$ as "$\omega$-bases", but not $\set{t_1, t_2}$.

\AP We say that two tokens $t_1, t_2$ \intro{meet} in a
configuration $\gamma$ if they share the same state i.e. if $\gamma(t_1) = \gamma(t_2)$.
The following lemma says that if starting from a configuration with a finite base $T_f$ we can reduce the size of the finite base (bring back an individual token into the cohort) with a strategy that treats tokens symmetrically, then we can do so while making sure that the tokens of $T_f$ never meet the tokens outside $T_f$ before this goal is achieved.

Intuitively, while the tokens of $T_f$ are all in bounded places, it means that they only meet boundedly many other tokens. 
Since this strategy works with an arbitrarily large cohort, we can show that in fact we can strengthen it to make sure that no token meets the ones in $T_f$. 

{
\begin{restatable}[Isolation Lemma]{lemma}{IsolationLemma}
	\label{lem:isolation}
	Fix a "population arena" $\arena$ and $k \in \NN$, and assume that from all $\gamma \in \arena$ there is a strategy
	to almost surely reach a configuration with a finite base of size $<k$,
	without leaving $\arena$.
	
	Then for all $\gamma' \in \arena$ with a finite base $T_f$ of size $k$, there is a strategy $\sigma$ which, when starting from $\gamma'$:
	\begin{itemize}
		\item surely remains in $\arena$;
		\item almost surely reaches a configuration with a finite base strictly included in $T_f$, and
		\item guarantees that in every configuration before that, every state contains either only tokens of $T_f$ or no token of $T_f$.
	\end{itemize}
\end{restatable}

\begin{proof}[Sketch of proof]
	A crucial element in the assumption is that $\arena$ is a "population arena", i.e., it does not distinguish tokens.
	Since we have a "winning strategy" to reach a configuration with finite base of size $<k$ from everywhere in $\Gamma$, a "safe random walk" in $\Gamma$ is a winning strategy, from all configurations of $\Gamma$.
	Again, this random walk treats tokens symmetrically.
	
	Now consider an initial configuration $\gamma'$ in $\Gamma$, with $N$ tokens. On states of the "$\omega$-base", we can add many more tokens and still have a winning strategy. We add many imaginary tokens on all states of the "$\omega$-base".
	Then we make the following observation: as long as tokens in $T_f$ remain outside the cohort,
	no token of the initial "finite base" $T_f$ meets the cohort; they can only meet a few tokens at a time. 
	If a token $t$ of $T_f$ meets sufficiently many tokens along a path, then some of them must have re-entered the cohort since they met.
	Since the random walk treats all tokens symmetrically, $t$ also had a chance of reaching the cohort.
	This lets us bound the expected number of tokens $t$ meets before reaching the cohort, independently of the number of tokens in total.
	As we add more imaginary tokens, the probability that one of the tokens $t$ meets one of the  real ones converges to $0$.
	
	We use these observations to show that we can obtain a probability as small as we want that any of the tokens of $T_f$ meets tokens outside $T_f$ before reaching the cohort.
	Since the set of configurations reachable from $\gamma'$ is finite, this implies that this probability can be brought down to $0$.
	As a consequence, we can make sure that a token of $T_f$ reaches the cohort while no token of $T_f$ meets one outside $T_f$ beforehand.
	The proof is detailed in Appendix~\ref{app:isolation}.
\end{proof}
}

\subsection{Proof of \cref{thm:cortosconj}}

The proof of Theorem~\ref{thm:cortosconj} uses a partition $T_1,\ldots ,T_d$
of the individuals $T_f$
into non-empty groups with at most $|S|$ tokens per group.
Moreover, the induction hypothesis assumes that
\begin{itemize}
\item individuals of different groups never meet in the "arena",
unless in final configurations.
\item $T_f$ is a finite base, i.e., the set of states occupied by the cohort is an $\omega$-base.
\end{itemize}
Since every group occupies at least one state then $d\leq |S|$.
The induction step is done by induction on $|S|-d$.
The base case $|S|=d$ is trivial, because in that case all tokens are individuals: the cohort is empty,
and $|S|$-definability follows from the hypothesis that tokens of different groups never meet unless on $F$.
The value of $F$ is not fixed; the induction step uses the inductive hypothesis for $F=W_0$.
The induction step is done as follows,
full details are in Appendix~\ref{app:induction}.
\begin{itemize}
	\item We try to follow a path to the target set $\ideal{\vec{F}}$. We can choose this path so that we create at most $|S|$ individual tokens, by \cref{lem:onlyone}. These $|S|$ isolated tokens constitute a new group of individuals, denoted $T_{d+1}$.
	\item If we deviate from that path, we use \cref{lem:isolation} to  define a "sub-arena" that lets us recover all individual tokens (i.e. bring them into the cohort) while making sure that
		none of them meets any token from outside their group.
	We then apply the induction hypothesis (for $d+1$ and $F$ the set of configurations where tokens of $T_{d+1}$ have been recovered) to recover those individual tokens within an "$|S|$-definable" sub-arena.

	\item Once all individuals are recovered, we apply the first step again, until we successfully follow the path to the end.
\end{itemize}
We define the desired "sub-arena" $Y$ as the union of those paths and "sub-arenas". Since, in any included configuration, the number of individuals residing on any one state is at most $\abs{\states}$, this arena must be $\abs{\states}$-definable. Clearly, \Laetitia wins from any included configuration by following the strategy outlined above. Therefore, $Y$ is a "winning@arena" "arena".\qed

\section{Theorem 7: The Dynamic Flow Problem}
\label{sec:seq-flows}

To prove \cref{thm:pspace-path-problem}, we rely on a result by Gimbert, Mascle and Totzke~\cite{GimbertMT25}
about maximising the flow through directed graphs, where edge capacities are dynamically chosen by the maximiser.
We recall the notation and statement of the relevant theorem.

\knowledgenewrobustcmd{\globalflow}[1]{\cmdkl{g(#1)}}
\knowledgenewrobustcmd{\multiflow}{\cmdkl{\xi}}

\AP A ""capacity"" is a function $\alpha : S^2 \to \NN \cup \set{\omega}$ mapping each pair of states $(s_1, s_2)$ to a maximal number of tokens that can be transferred from $s_1$ to $s_2$.
Let $A \subseteq  (\NN \cup \set{\omega})^{S^2}$ be a finite alphabet of "capacities", and $E \subseteq S^2$ a set of pairs of states.
Given a word $w = \alpha_1 \dots \alpha_k \in A^*$ and a set of tokens $T$, a ""token flow"" over $w$ is a sequence of configurations $\tau = \gamma_0 \to \cdots \to \gamma_k \in (S^T)^*$ such that, for every $i \in [1, k]$
and $s, s' \in S$,
$|\set{t \in T \mid \gamma_{i-1}(t) = s \land \gamma_{i}(t) = s'}| \leq \alpha_i(s, s')$.
Define the ""global flow"" of $\tau$ as the function $\intro*\globalflow{\tau} : S^2 \to \NN$ counting the number of tokens moving between each pair of states: 
formally, for all $s,s' \in S$,
\[
	\globalflow{\tau}(s,s') = \abs{\set{t \in T : \gamma_0(t) = s \land \gamma_k(t) = s'}}.
\] 
 Now let $L \subseteq A^*$ be a language over $A$. 
The ""maximal sequential multi-flow on $E$ satisfying $L$"", denoted $\intro*\multiflow(E,L)$,  is the maximum number of tokens that can be simultaneously transferred between every pair of states in $E$ by a path over some word of $L$.
\[
\multiflow(E,L)=\sup_{w \in L}\quad\sup_{\pi \text{ over } w}\quad\min_{(s,s') \in E} \globalflow{\pi}(s,s').
\]

\begin{example}
Consider the capacities from \cref{fig:tiles},
and set of edges $E=\set{(s_1,s_4), (s_3,s_4)}$. For the language $L=\set{abba}$ containing only the capacity word $abba$, we get
$\multiflow(E,L) = 1$, witnessed by the token flow $\tau$ with global flow $\globalflow{\tau}(s_1,s_4)=\globalflow{\tau}(s_3,s_4)=1$.
Note that the maximal flow from $s_2$ to $s_3$ for the intermediate word $bb$ cannot exceed $2$.
However letting $L=\{ab^na\}$, we have 
$\multiflow(E,L) = \infty$, because for every $n$ there is a token flow $\tau_n$ over $ab^{2n}a$ so that $\globalflow{\tau_n}(s_1,s_4)=\globalflow{\tau_n}(s_3,s_4)=n$.
\end{example}

As Gimbert et al.~show, one can determine if the such flows are unbounded, and otherwise precisely compute their finite value, in polynomial space.
\begin{figure}[t]
  \centering
  \newlength{\subfigheight}
  \setlength{\subfigheight}{2.5cm} 
  \begin{subfigure}[b][\subfigheight][b]{0.16\textwidth}
    \centering
    \vspace*{\fill}
    \begin{tikzpicture}[node distance=1.2cm,font=\scriptsize]

\node[state] (s) at (0,0) {$s_1$};
\node[state,right of=s] (u) {$s_2$};
\node[state,below of=s] (v) {$s_3$};
\node[state,right of=v] (t) {$s_4$};

\path[use as bounding box] (s) rectangle (t);
\draw[extracolour1] (s) edge node[above] {$\omega$} (u);
\draw[extracolour1] (v) edge node {$\omega$} (u);
\draw[extracolour1,->] (v) edge node[above] {$\omega$} (t);
\end{tikzpicture}
    \vspace*{\fill}
    \caption*{\centering capacity $a$}
  \end{subfigure}
  \hfill
  \begin{subfigure}[b][\subfigheight][b]{0.18\textwidth}
    \centering
    \vspace*{\fill}
    \begin{tikzpicture}[node distance= 1.2cm,font=\scriptsize,scale=0.8]

\node[state] (s) at (0,0) {$s_1$};
\node[state,right of=s] (u) {$s_2$};
\node[state,below of=s] (v) {$s_3$};
\node[state,right of=v] (t) {$s_4$};

\path[use as bounding box] (s) rectangle (t);
\draw[extracolour2,->] (v) edge[loop left] node[extracolour2,left] {$\omega$} (v);
\draw[extracolour2,->] (u) edge[loop right] node[extracolour2,right] {$\omega$} (u);
\draw[extracolour2,->] (u) edge[] node[extracolour2,above] {$1$} (v);
\end{tikzpicture}
    \vspace*{\fill}
    \caption*{\centering capacity $b$}
  \end{subfigure}
  \hfill
  \begin{subfigure}[b][\subfigheight][b]{0.27\textwidth}
    \centering
    \vspace*{\fill}
    {
      \begin{tikzpicture}[scale=0.47]
	\flowlabel[xshift=-0.3cm]{4}{1/$s_1$,2/$s_2$,3/$s_3$,4/$s_4$}
        \flowwithletter[extracolour1]{$a$}{4}{1-2/$\omega$,3-2/$\omega$,3-4/$\omega$}
        \flowwithletter[extracolour2,xshift=1.5cm]{$b$}{4}{2-2/$\omega$,2-3/$1$,3-3/$\omega$}
        \flowwithletter[extracolour2,xshift=3cm]{$b$}{4}{2-2/$\omega$,2-3/$1$,3-3/$\omega$}
        \flowwithletter[extracolour1,xshift=4.5cm]{$a$}{4}{1-2/$\omega$,3-2/$\omega$,3-4/$\omega$}
      \end{tikzpicture}
    }
    \vspace*{\fill}
    \caption*{\centering capacity word $abba$}
  \end{subfigure}
  \hfill
  \begin{subfigure}[b][\subfigheight][b]{0.3\textwidth}
    \centering
    \vspace*{\fill}
    {
\begin{tikzpicture}[scale=0.47]
  \flowlabel[xshift=-0.4cm]{4}{1/$s_1$,2/$s_2$,3/$s_3$,4/$s_4$}
  \node[flow point content,colored tokens={col3}] (L1) at (0, 0cm) {};
  \node[flow point content] (L2) at (0, -1cm) {};
  \node[flow point content,colored tokens={col4}] (L3) at (0, -2cm) {};
  \node[flow point content] (L4) at (0, -3cm) {};
  \node[flow point content] (R1a) at (1.5, 0cm) {};
  \node[flow point content,colored tokens={col4,col3}] (R2a) at (1.5, -1cm) {};
  \node[flow point content] (R3a) at (1.5, -2cm) {};
  \node[flow point content] (R4a) at (1.5, -3cm) {};

  \path[extracolour1] (L1) edge[-] node[flow edge label] {$1$} (R2a);
  \path[extracolour1] (L3) edge[-] node[flow edge label] {$1$} (R2a);
  \node[yshift=0.5cm, flow point label,extracolour1] at (0.75,0) {$a$};

  \begin{scope}[xshift=1.5cm]
    \node[flow point content] (R1b) at (1.5, 0cm) {};
    \node[flow point content,colored tokens={col3}] (R2b) at (1.5, -1cm) {};
    \node[flow point content,colored tokens={col4}] (R3b) at (1.5, -2cm) {};
    \node[flow point content] (R4b) at (1.5, -3cm) {};
    \path[extracolour2] (R2a) edge[-] node[flow edge label] {$1$} (R2b);
    \path[extracolour2] (R2a) edge[-] node[flow edge label] {$1$} (R3b);
  \node[yshift=0.5cm, flow point label,extracolour2] at (0.75,0) {$b$};
  \end{scope}

  \begin{scope}[xshift=3cm]
    \node[flow point content] (R1c) at (1.5, 0cm) {};
    \node[flow point content] (R2c) at (1.5, -1cm) {};
    \node[flow point content,colored tokens={col3,col4}] (R3c) at (1.5, -2cm) {};
    \node[flow point content] (R4c) at (1.5, -3cm) {};
    \path[extracolour2] (R2b) edge[-] node[flow edge label] {$1$} (R3c);
    \path[extracolour2] (R3b) edge[-] node[flow edge label] {$1$} (R3c);
  \node[yshift=0.5cm,flow point label,extracolour2] at (0.75,0) {$b$};
  \end{scope}

  \begin{scope}[xshift=4.5cm]
    \node[flow point content] (R1d) at (1.5, 0cm) {};
    \node[flow point content] (R2d) at (1.5, -1cm) {};
    \node[flow point content] (R3d) at (1.5, -2cm) {};
    \node[flow point content,colored tokens={col3,col4}] (R4d) at (1.5, -3cm) {};
    \path[extracolour1] (R3c) edge[-] node[flow edge label] {$2$} (R4d);
  \node[yshift=0.5cm, flow point label,extracolour1] at (0.75,0) {$a$};
  \end{scope}
\end{tikzpicture}
    }
    \vspace*{\fill}
    \caption*{\centering token flow $\tau$ over $abba$}
  \end{subfigure}
  \caption{Two capacity constraints $a,b:\states\to\NN\cup\{\omega\}$,
a capacity word, and a token flow over it.}
\label{fig:tiles}
\end{figure}

\begin{theorem}[Theorem~37 in \cite{GimbertMT25}]\label{thm:citeSFP}
	Let $A$ be a set of capacities over a set of states $S$, with coefficients in $\set{0, \dots, K, \omega}$, and $L \subseteq A^*$ a regular language recognised by an NFA with $m$ states.
	Then, either $\multiflow(E,L) = +\infty$, or $\multiflow(E,L) \leq K(2|S|)^{(170 \log_2(m)+835)|S|^{12}}$.
\end{theorem}
This is almost what we need for deciding the \problemm, except for two things.
First, our constraints are on vertices while theirs are on edges. 
Second, in our initial "ideal" we may have both states marked $\omega$ and states bounded by finite numbers.
We will translate the constraints given by "ideals" over configurations into "capacities", thereby reinterpreting "paths" as "token flows".
This way one can use \cref{thm:citeSFP} to check that we can transfer arbitrarily many tokens from the states marked $\omega$, while using the regular constraint to track the finite number of extra tokens and make sure that all token end up in $F$.

%

Let $\?M = (S, \Sigma, \Delta)$, $\vectV$, $\vectv_0$, $\vectF$ be an instance of the \problemm.
We split $\vectv_0$ in two parts $\vectv_0 = \vectv_f + \vectv_\omega$, so that $\vectv_f \in \NN^S$ and $\vectv_\omega \in \set{0, \omega}^S$. 
Vector $\vectv_\omega$ describes the set of states with arbitrarily many tokens at the start, while $\vectv_f$ describes the remaining tokens in other states.
Intuitively, the \problemm{} comes down to checking that we can simultaneously transfer tokens of $\vectv_f$ and arbitrarily many tokens from every state marked  $\omega$ in $\vectv_0$ to $\ideal{\vectF}$.
We will construct a finite alphabet of capacities and a finite automaton $\?A$ which guesses the movement of the remaining tokens. Its language is the set of sequences of "capacities" which can be used while making sure that both the cohort of tokens from unbounded states and those remaining tokens are brought safely to $\vectF$. 
The largest finite number appearing in the "capacities" is the same as in $\vectV$, and $\?A$ has exponential size in the input.

See Appendix~\ref{app:reduction-to-SFP} for the automaton construction and a proof of the following lemma.

\begin{restatable}{lemma}{ReductionToSFP}
	\label{lem:reduction-to-SFP}
	$\?M$, $\vectV$, $\vectv_0$, $\vectF$ is a positive instance of the \problemm{} if and only if 
	there exists $E \subseteq S^2$ such that 
	\begin{enumerate}
		\item\label{cond1} $\multiflow(E,L(\?A)) = +\infty$
		
		\item\label{cond2} $\vectv_0^{-1}(\omega) \subseteq \set{s \in S  \mid \exists s' \in S, (s,s') \in E}$.
	\end{enumerate}
\end{restatable}

\begin{proof}[Proof of Theorem~\ref{thm:pspace-path-problem}]
	By \cref{lem:reduction-to-SFP}, in order to check if $\?M$, $\vectV$, $\vectv_0$, $\vectF$ is a positive instance of the \problemm, we can
	enumerate sets $E \subseteq S^2$
and for each such $E$, check the two conditions from \cref{lem:reduction-to-SFP}.		
	%
	Condition~\ref{cond2} can be checked easily in polynomial time (and space).
	For condition~\ref{cond1}, we can rely on \cref{thm:citeSFP}:
	Note that the automaton $\?A$ as constructed above is singly exponential in the our input.
	Consequently, the same holds for $m$ and the quantity $M = K(2|S|)^{(170 \log_2(m)+835)|S|^{12}}$.
	This means that in order to check whether $\multiflow(E, L(\?A))= + \infty$, it suffices to check that there is a word $w \in L(\?A)$ and a path $\pi$ over $w$ carrying $M+1$ tokens from $s$ to $s'$ for all $(s,s') \in E$.
	These can be guessed on the fly: it suffices to memorise the current state in the automaton and the number of tokens in each state, in binary.
	This only requires polynomial space in $\log(K)$ and $|S|$.
 \end{proof}

%



\section{Lower bound}
\label{sec:lower-bound}

An exponential-time lower bound for the \problem\ 
can be established by reduction from countdown games, as shown by 
Mascle, Shirmohammadi, and Totzke in \cite{DBLP:journals/corr/abs-1909-06420}.

\begin{restatable}{definition}{DefCDG}\label{def:CountDownGames}
\AP A ""Countdown Game""
is given by a directed graph $\?G=(V,E)$,
where edges carry positive integer weights, 
$E\subseteq (V\x\+N_{>0}\x V)$. 
For an initial pair $(v,c_0)\in V\x\+N$ of a vertex and a number, 
two opposing players (Player~1 and~2)
alternatingly determine a sequence of such pairs as follows.
In each round, from $(v,c)$,
Player~1 picks a number $d\le c$ such that $E$ contains at least one edge $(v,d,v')$;
then Player~2 picks one such edge and the game continues from $(v',c-d)$.
Player~1 wins the game iff the play reaches a pair in $V\x\{0\}$.
\end{restatable}

Determining the winner of a Countdown Game, where all constants are given in binary, 
is \exptime-complete \cite{JLS2007}.
We state the lower bound and a sketch of the construction. The full proof is detailed in Appendix~\ref{sec:app-lower-bound}.

\begin{restatable}{theorem}{ThmExpHard}\label{thm:exp-hard}
	The \problem\ is \exptime-hard.
\end{restatable}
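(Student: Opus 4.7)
The plan is to reduce from Countdown Games with binary-encoded weights, shown \exptime-complete in~\cite{JLS2007}. Given an instance $(\mathcal{G}, v_0, c_0)$, I will construct in polynomial time an MDP $\mathcal{M}$ with initial state $i$ and target set $F$ such that Controller wins the \problem\ if and only if Player~1 wins the Countdown Game from $(v_0, c_0)$.

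The high-level simulation is standard: the MDP has one state per vertex $v \in V$ together with polynomially many auxiliary states, Controller's actions correspond to Player~1's choices of edge weight $d$ (encoded bit-by-bit using $O(\log W)$ successive actions since $d$ may be exponential), and the random resolution of non-determinism models Player~2's selection among the outgoing edges of the chosen weight. The crux is encoding the exponentially large counter $c_0$ inside a polynomial-size MDP: the key resource is the unbounded token population, which lets us represent the counter as a binary value spread across logarithmically many bit-tracking states, with random transitions used to propagate carries and to test for underflow.

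The main obstacle is designing a decrement gadget whose \emph{almost-sure} behaviour encodes counter non-negativity exactly: after Controller commits to subtracting $d$, the resulting configuration must admit an almost-sure strategy to $F$ if and only if $d \le c$; any overspend must strand, with positive probability, at least one token that can never be brought to $F$. This asymmetry must hold uniformly in $N$, which rules out approaches based merely on concentration of measure. The natural fix is to run the decrement simulation on every token independently, so that a single unlucky random draw already prevents almost-sure winning, mirroring the way Player~2 can refute Player~1 with a single well-chosen edge.

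For correctness, if Player~1 has a winning strategy $\tau$, Controller mimics it round by round: the simulated counter never underflows, every decrement gadget cleans up, and all tokens reach $F$ almost surely. Conversely, we invoke the standard equivalence between adversarial and purely stochastic resolution of non-determinism for almost-sure reachability objectives: if Player~2 has a winning counter-strategy, then against every Controller strategy the random transitions realise such a winning branch with positive probability, leaving some token forever outside $F$ and violating almost-sure reachability. Combined with Theorem~\ref{thm:main}'s \exptime\ upper bound, this yields \exptime-completeness.
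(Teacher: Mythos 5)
Your high-level framing is right: you reduce from Countdown Games, you observe that a randomizing Player~2 is as strong as an adversarial one for almost-sure reachability because plays are finite-horizon, and you recognise that the population must be used to hold a binary counter across logarithmically many ``bit'' states (which is exactly what the paper does with its marked-state counters). Up to that point you match the paper's strategy.

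The gap is in how you propose to run the simulation. You say the fix is ``to run the decrement simulation on every token independently, so that a single unlucky random draw already prevents almost-sure winning.'' That is incompatible with the counter encoding you just committed to: the counter is a shared resource distributed across the population (a bit is $1$ iff some token marks the corresponding state), so there is no per-token counter to speak of, and if you push all tokens through the game graph at once they scatter over distinct vertices after a single random step, at which point no single global action is safe and there is no coherent notion of decrementing ``the'' counter by an amount depending on where each token went. The paper resolves exactly this tension with the waiting gadget: tokens are held in a reservoir, a single token is isolated (Example~\ref{ex:force-one} style, repeatedly playing \texttt{wait} until exactly one marks \texttt{Ready}), that token alone traverses the game graph while the shared counters are set and decremented deterministically (with daemonic actions enforcing zero-tests), and only then is the next token admitted. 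Your proposal is also slightly off on the action alphabet: you need no bit-by-bit encoding of $d$ across $O(\log W)$ rounds, since the game graph is given explicitly and one action per edge $(v,d,v')$ suffices; what does take $O(\log W)$ rounds is decrementing the counter by $d$ via per-bit decrement actions, which is a different phase. Until you add the one-at-a-time isolation mechanism and specify how the shared counter's decrement and zero-test are enforced as safety constraints, the construction as sketched does not go through.
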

\begin{proof}[Proof sketch]
%
The number of turns in a Countdown Game cannot exceed the initial value of the counter, as the initial counter value decreases at each turn. Thus, if Player~2 has a winning strategy, choosing actions at random yields a positive probability of applying that strategy, hence a positive probability of winning. Therefore, Player~1 wins the initial game if and only if she wins with probability one against a randomised adversary.

The main idea for the further construction is to require \Laetitia, who impersonates Player~1, to move tokens one-by-one 
from a waiting state, first into the control graph of the Countdown game, and ultimately into the target.
To avoid a loss in the intermediate phase, she must win an instance of the game against a randomising opponent.
This is enforced using a combination of gadgets, including two binary counters (encoded using two states per bit) that 
can effectively test for zero, be set to specific numbers,
and that are set up so that they can decrement at the same rate.
These are used to hold the global integral value $n$, and an auxiliary counter holding the value $d$ chosen by Player~1.
\Laetitia is compelled to reduce them both and can only continue once the auxiliary counter is exhausted.
She can only afford to safely end the simulation of the game if the first counter holds value $0$.
As a result, Player~1 has a winning strategy for the two-player Countdown Game if, and only if, \Laetitia
can synchronise the $n$-fold product of the constructed MDP
for all $n$.
\end{proof}

\section{Conclusion}
We have shown that the \problem\ is \exptime-complete.
We establish
that it is possible to win the population MDP
while staying in a part of the winning region
that has low descriptive complexity, in the sense of 
Theorem~\ref{thm:cortosconj}.
This is the key to defining an algorithm (Algorithm~\ref{alg:EXPSPACE}) to solve the \problem\ using an exponential number of calls to an oracle solving the \problemm.



These results shed new light on parameterised control
and pave the way for further positive results with more ambitious objectives.
For example, the tools developed in this paper may be used to address a generalised version of the problem, 
where infinite executions must satisfy $\omega$-regular conditions.

A natural question that remains open is the cut-off: if there is a number $n$ such that we cannot synchronize almost surely $n$ tokens, how large can the smallest such $n$ be? 
In the adversarial case studied in~\cite{BertrandDGG17}, it was shown that the cut-off was doubly exponential in the worst case. 
We conjecture that this is also the case in our setting. We show in Appendix~\ref{app:example-2exp-cutoff} that the cut-off may be doubly exponential, but we are missing a matching upper bound.

\bibliography{biblio.cleaned.bib}

\newpage
\appendix

\section{Missing Proofs in Section~\ref{sec:prelims}}
\label{app:winning-arenas}

\WinningArenas*

\begin{proof}
	\begin{enumerate}
		\item Let $(\Gamma_i)_{i \in I}$ be a collection of "winning arenas". Their union is still an "arena", since for all $i$ every commit of $\Gamma_i$ guarantees to stay in $\Gamma_i$.
		Further, it is "winning@@strat" since for all $i$, for every state $s \in \Gamma_i$ there is a path to $F$ within $\Gamma_i$.
		As a consequence, the \emph{largest winning arena} $W$ is well-defined, as the union of all "winning arenas".
		It remains to argue that $W$ is the "winning region".
		Let $s$ be a state of the "winning region", and $\sigma$ a "winning strategy" from $s$.
		Let $\Gamma$ be the set of "commits" $(s',a)$ such that there is a positive probability to reach $s'$ before $F$ from $s$ while applying $\sigma$ and $\sigma(s',a)>0$.
		Clearly $\Gamma$ contains $s$, and it is easy to check that $\Gamma$ is a "winning arena", hence included in $W$.
		Thus the "winning region" is included in the projection of $W$ on $S$.
		The other inclusion is proven by the following item, which shows that there is a "winning@@strat" "strategy" from every state in $W$.

		\item Let $s$ be a state in the "winning region".
		Since we are in a "simple" MDP, the set of states reachable from $s$ is finite.
		Let $B$ be its size, and $p$ the smallest positive probability appearing in it.
		When starting from $s$, at all times the probability to reach $F$ within the next $B$ steps with a "safe random walk" is thus at least $(\frac{p}{|\Sigma|})^B$.
		Thus, the probability to not reach $F$ within $N$ steps from $s$ decreases exponentially with $N$, and converges to $0$ as $N$ grows.
		As a result, a "safe random walk" in $W$ is a "winning strategy" from $s$.\qedhere
	\end{enumerate}
\end{proof}

\noindent
The following lemma justifies our focus on the \emph{simultaneous} reachability objectives, that \Laetitia aims to move all tokens into target states at the same time rather than just ensuring that all tokens eventually visit a target state. These two objectives are clearly different. This can be seen for example by a two-state automaton between which tokens must alternate on any action and where only one state is final.
However, the two respective variants of the random population control problem are easily inter-reducible in logarithmic space.

\begin{lemma}
	\label{lem:simultaneously}
	The \problem{}, where \Laetitia aims to move all tokens into target states \emph{at the same time},
	is (logspace) inter-reducible to the variant where she aims to ensure that every token eventually reaches a target state, not necessarily at the same time as others.
\end{lemma}
\begin{proof}
	Let $\?M=(\states, \act, \prob)$, initial state $i\in\states$ and target set $F\subseteq\states$ be an instance of the \problem{}.
	To construct an equivalent instance of the non-simultaneous problem, 
	\begin{itemize}
		\item introduce a state $f$ that acts as a sink (has self-loops on all actions $a\in\act$) and is the only target state, and
		\item for every original target state $s\in F$, add a new action $\$$ and transitions $s\step{\$} f$.
	\end{itemize}
	If the given instance was winning, then for 
	every set $T\subseteq T_\infty$, there is a strategy to simultaneously move them all into $f$. Following such a strategy and playing action $\$$ once $F^T$ is reached is a winning strategy for the non-simultaneous objective in the constructed instance.
	Conversely, if the constructed instance is positive, then for all $T\subseteq T_\infty$ there must be a strategy that almost surely reaches $F^T$, as the new target $f$ is only reachable from states of $F$ via the action $\$$, which is not safe to play in configurations outside $F^T$.
	\medskip

	Towards the other reduction, if given an instance of the non-simultaneous problem,
	\begin{itemize}
		\item introduce a state $f$ that acts as a sink (has self-loops on all actions $a\in\act$) and is the only target state, and 
		\item for every original target state $s\in F$, add a new action $a_s$ and transitions $s\step{a_s} f$ as well as $s'\step{a_s}s'$ for all other states $s'\neq s$.
	\end{itemize}
	We claim that the constructed automaton is a positive instance of the simultaneous problem iff the given instance of the non-simultaneous problem is positive.
	Indeed, for any set $T\subseteq T_\infty$, a winning strategy in the non-simultaneous instance ensures  almost surely that every token $t\in T$ eventually visits a target state. We can follow this strategy and whenever
	some tokens occupy a state $s\in F$, interrupt and play the action $a_s$. This will move tokens from $s$ to the accepting sink and have no effect on all others.
	Since doing this only removes tokens from the configuration, we are still in the "winning region", one can continue with a "winning strategy".
	By assumption, eventually, all tokens will reach  $F$, and thus gather in $f$.
	The described interrupting strategy must therefore be winning for the simultaneous objective in the constructed automaton.
	Conversely, any winning strategy
	for the simultaneous objective towards $f$ dictates a winning strategy in the original instance (ignore the actions $a_s$) because a token can only enter $f$ from original accepting states.
\end{proof}

\section{Correctness of Algorithm 1}
\label{app:alg}
Consider \cref{alg:EXPSPACE}.
The check in line 3, whether a candidate $\vec{V}$ is an "arena", is easy and syntactic, see the lemma below.

\begin{lemma}\label{lem:condition-1}
	Given a finite set $\OLY \subseteq \set{0,\ldots, |S|, \omega} \times \Sigma$ of "symbolic commits" with constants below $\card{S}$
	and $(\vectv,a)\in \OLY$,
	one can check, in exponential time in $|S|$,
	if there exist a commit $(\gamma_1,a) \in \ideal{(\vectv,a)}$ and a  configuration $\gamma_2 \notin \ideal{\OLY}$ with $\Delta(\gamma_1,a)(\gamma_2)>0$.
\end{lemma}
\begin{proof}
	Notice that if such configurations $\gamma_1$ and $\gamma_2$ exist, then already small ones.
	Indeed, let $\gamma_1^-$ be a configuration obtained as follows:
	in every state $s \in S$, if $\gamma_1$ has more than $|S|+1$ tokens in $s$, remove every token from $s$ except $|S|+1$ of them.
	Remove the same tokens from $\gamma_2$ to obtain a configuration $\gamma_2^-$.
	
	As constants in $\OLY$ do not exceed $|S|$, and as $\gamma_2 \notin \ideal{\OLY}$, $\gamma_2^-$ is not in $\ideal{\OLY}$ either.
	As $\gamma_1 \in \ideal{\OLY}$, we have $\gamma_1^- \in \ideal{\OLY}$.
	
	Observe that $\gamma_1^-$ and $\gamma_2^-$ both contain at most $(|S|+1)|S|$ tokens. 
	We can enumerate triples $\gamma_1,a,\gamma_2$ (up to renaming tokens) with at most $(|S|+1)|S|$ tokens in $\gamma_1$ and $\gamma_2$, and check that they satisfy the conditions, in exponential time in $|S|$.
\end{proof}

The condition in line 5 is much more challenging. It requires solving (the negation of) the \problemm.
\cref{sec:seq-flows} provides
an  algorithm which solves the \problemm{} in polynomial space in the number of states $|S|$ (and in the logarithm of the largest constant appearing in the input, which happens to be at most $|S|$ here) (Theorem~\ref{thm:pspace-path-problem}).

We now argue that \cref{alg:EXPSPACE} is correct.

\begin{lemma}
	\label{lem:alg-correct}
	Algorithm~\ref{alg:EXPSPACE} returns \emph{True} if and only if the answer to the \problem\ is positive.
\end{lemma}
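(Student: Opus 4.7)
The plan is to establish soundness and completeness of the fixed-point loop, using Theorem~\ref{thm:cortosconj} as the key ingredient for completeness, and reading the termination condition directly for soundness.

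\textbf{Soundness.} When the loop exits, neither of the two removal conditions fires. Reading this off the pseudocode yields two properties of the final set $\OLY$. First, for every $(\vb,a)\in\OLY$ and every $w\in\ideal{\vb}$, every $a$-successor $u$ of $w$ satisfies $u\in \ideal{\OLY}$. This means exactly that $\ideal{\OLY}$ is an "arena" in the sense of the Preliminaries. Second, for every $(\vb,a)\in\OLY$ and every $w\in\ideal{\vb}$, there is a "path" from $w$ to $\ideal{\Fb}$ inside $\ideal{\OLY}$, which is precisely the definition of a "winning@@arena" "arena". By Lemma~\ref{lem:winningarena}, $\ideal{\OLY}$ is contained in the "winning region". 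If the algorithm returns \emph{True}, some $(\vecti,a)\in\OLY$, so $\ideal{\vecti}\subseteq \ideal{\OLY}$, and $\ideal{\vecti}$ contains $i^{(n)}$ for every $n$; hence $i^{(n)}$ is almost-surely winning for all $n$ and the \problem\ is positive.

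\textbf{Completeness.} Assume the \problem\ is positive. By Theorem~\ref{thm:cortosconj} there is an "$|S|$-definable" "winning@@arena" "arena" $Y$ containing every configuration $i^{(n)}$ that is almost-surely winning. Let
\[
  \OLY_Y \;=\; \{(\vb,a)\in\{0,\ldots,|S|,\omega\}^S\times \act \;:\; \ideal{(\vb,a)} \subseteq Y\},
\]
so that, by "$|S|$-definability" of $Y$, the concrete arena is recovered as $\ideal{\OLY_Y} = Y$. The key claim is the loop invariant $\OLY_Y \subseteq \OLY$. It holds initially. For the inductive step, suppose $(\vb,a)\in \OLY_Y$ and $w\in\ideal{\vb}$; then $(w,a)\in Y$. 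Because $Y$ is an "arena", every $a$-successor $u$ of $w$ belongs to $Y = \ideal{\OLY_Y}\subseteq \ideal{\OLY}$, so the condition of line~3 cannot fire on $(\vb,a)$. Because $Y$ is a "winning@@arena" arena, there is a path from $w$ to $\ideal{\Fb}$ in $Y \subseteq \ideal{\OLY}$, so the condition of line~6 cannot fire either. Hence no element of $\OLY_Y$ is ever removed.

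\textbf{Conclusion and obstacles.} Since $i^{(n)}\in Y$ for arbitrarily large $n$ and $Y$ is "$|S|$-definable", there exists an action $a$ with $(\vecti,a)\in \OLY_Y$; by the invariant this pair survives in the final $\OLY$, so the algorithm returns \emph{True}. Termination is immediate since $\OLY$ only shrinks inside a finite set. The only subtle point I expect is making the notational passage between symbolic commits in $\{0,\ldots,|S|,\omega\}^S\x\act$, their downward-closed interpretation $\ideal{\OLY}$, and the abuse $u\in \ideal{\OLY}$ meaning $\exists b,(u,b)\in\ideal{\OLY}$, fully rigorous when matching them against the definition of "arena". Once that bookkeeping is in place, the argument is entirely a routine invariant-plus-fixpoint verification, with Theorem~\ref{thm:cortosconj} doing all the structural work.
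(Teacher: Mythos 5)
Your proof is correct and follows essentially the same route as the paper's: soundness by reading off the two fixed-point conditions and invoking Lemma~\ref{lem:winningarena}, completeness by defining the set of symbolic commits whose ideals lie in the $|S|$-definable witness arena $Y$ from Theorem~\ref{thm:cortosconj} and proving it is a loop invariant. One minor phrasing gap: Theorem~\ref{thm:cortosconj} states that $Y$ contains $\BoundK{W}{}$, not literally "every configuration $i^{(n)}$ that is almost-surely winning"; the paper bridges this by observing that positivity of the problem gives $\ideal{\vecti}\subseteq W_{0,\omega}\subseteq Y$ (using downward-closure of the winning region), which then yields $(\vecti,a)\in X$ for some $a$. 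Adding that one sentence would make your conclusion step fully rigorous, but the argument is otherwise the same.
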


\begin{proof}
Let $\vectV_0 \supsetneq \vectV_1 \supsetneq \ldots \supsetneq \vectV_n$ be the successive values of $\vectV$ throughout the execution,
which terminates by monotonicity.

	Suppose the algorithm returns \emph{True} in line 8. Let 	
	$Y = \ideal{\vectV_n}$.
	As we exited the loop in line 7, no "symbolic commit" was removed in lines 4 and 6.
	Hence the condition on line 3 was false, meaning that $Y$ is an "arena", and the condition on line 5 was false as well, meaning that $Y$ is a "winning arena" for reaching $\ideal{\textbf{f}}$. Furthermore, it contains $\ideal{\textbf{i}}$, since we returned \emph{True}.
	As a consequence, by Lemma~\ref{lem:winningarena},
	a "safe random walk in $Y$" almost surely reaches $\ideal{\textbf{f}}$ from all configurations in $\ideal{\textbf{i}}$.
	
	For the other direction, suppose there is a "winning strategy". Let $W$ be the maximal "winning arena". 
	By Theorem~\ref{thm:cortosconj}, there is an "arena" $Y \subseteq W$ satisfying the three conditions of Theorem~\ref{thm:cortosconj}. 
	Let $\vectU = \set{(\vectv,a) \in {\set{0,\ldots, |S|, \omega}}^S \times \act \mid \ideal{(\vectv,a)} \subseteq Y}$.
	As $Y$ is "$|S|$-definable", we have $Y = \ideal{\vectU}$.
	
	We show that the algorithm maintains the following invariant,
	defined for  $i \in \set{0\ldots n}$.
	\begin{align}
	\label{invariant1}
&	\vectU \subseteq \vectV_i.
	\end{align}
	This is clear for $i=0$.
	Assume the invariant holds for some $i< n$:
	since $\vectU \subseteq \vectV_i$, we have $Y \subseteq \ideal{\vectV_i}$.
	Since $\vectV_i \supsetneq \vectV_{i+1}$, some "symbolic commit" $(\vectv, a)$ is removed from $\vectV_i$ to obtain $\vectV_{i+1}$.
	As $Y$ is a "winning arena", for all $(\gamma,a) \in Y$, every successor of $\gamma$ by $a$ is in $Y$, and thus in $\ideal{\vectV_i}$.
	Also, for all $\gamma \in Y$ there is a path in $Y$ (and thus in $\ideal{\vectV_i}$) from $\gamma$ to $\ideal{\textbf{f}}$.
	
	By contrast, since $(\vectv,a)$ was removed from $\vectV_i$, it contains some $(\gamma,a)$ such that either $(\gamma,a)$ has a successor outside of $\vectV_i$, or $\ideal{\textbf{f}}$ is not reachable from $\gamma$ in $\vectV_i$.
	As a consequence, 
	 $\ideal{(\vectv,a)} \nsubseteq Y$,
	hence $(\vectv,a) \notin \vectU$, and thus $\vectU \subseteq \vectV_i\setminus \set{(\vectv,a)} = \vectV_{i+1}$. 

	Finally, the invariant holds for $i=n$. Thus, $ Y \subseteq \ideal{\vectV_n}$.
	Furthermore we have $ \BoundK{\arena}{0} \subseteq Y$, by definition of $Y$. 
	As there is a "winning strategy", we also have $\ideal{\textbf{i}} \subseteq \BoundK{\arena}{0}$. Hence $\ideal{\textbf{i}} \subseteq Y$, and thus $\textbf{i} \in \vectU$. As a result, the algorithm returns \emph{True}.
\end{proof}

\section{Missing Proofs for Section~\ref{sec:proof-main-thm}}
\subsection{Proof of Lemma~\ref{lem:onlyone}}
\label{app:funneling}

\newcommand{\RRp}{\overline{\mathbb{R}^+}}
\newcommand{\valueflow}{\mathbf{val}}
\newcommand{\capacity}{\mathbf{weight}}

Let us start by defining the necessary terminology on flows and cuts.
Let $\RRp$ be the set of non-negative real numbers, with an additional maximum element $\omega$. We extend the addition by setting $\omega + r = r+ \omega$ for all $r \in \RRp$.

Given a finite directed graph $G = (V, E)$, a capacity function $c : V \to \RRp$, and two vertices $src$ and $tgt$ (a source and a target), we define a \emph{flow} from $src$ to $tgt$ as follows.
It is a function $f : E \to \RRp$ such that for all $v \in V \setminus \set{src,tgt}$, we have 
\[\sum_{(v_-,v) \in E} f(v_-,v) = \sum_{(v,v_+) \in E} f(v,v_+) \leq c(v).\]
The \emph{value} of the flow is defined as $\valueflow_{G,c}(f) = \sum_{(src,v) \in E} f(src,v)$.

A \emph{cut} is a set $M$ of vertices such that every path from $src$ to $tgt$ contains a vertex of $M$. Its \emph{weight} is defined as $\capacity_{G,c}(M) = \sum_{v \in M} c(v)$.

\begin{theorem}[Max flow-min cut~\cite{ford1956maximal}]
	For every graph $G = (V,E)$, capacity function  $c : V \to \RRp$ and vertices $src, tgt \in V$, 
	\[
		\max_{f \text{flow}} \valueflow_{G,c}(f) = \min_{M \text{cut}} \capacity_{G,c}(M)
	\]
\end{theorem}

\begin{remark}
	We state this theorem with the capacities on the vertices, as it is convenient for the next proof.
	It is more commonly stated with capacities on the edges $c : E \to \RRp$.
	The constraints on the flow is then $\sum_{(v_-,v) \in E} f(v_-,v) = \sum_{(v_-,v) \in E} f(v_-,v)$ for all $v \in V$ and $f(e)\leq c(e)$ for all $e \in E$.
	A cut is then defined as a set of edges, and the theorem is stated analogously.
\end{remark}

Another classic result is the \emph{integer flow theorem}. It says that if all capacities in the graph are integers, then there is an integer maximal flow $f : E \to \Nb$.
This is  a by-product of the Ford-Fulkerson algorithm.

\begin{theorem}[Integer flow theorem~\cite{ford1956maximal}]
	For every graph $G = (V,E)$, capacity function  $c : V \to \Nb$ and vertices $src, tgt \in V$, 
	there exists a flow $f : E \to \RRp$ of maximal value such that $f(u,v) \in \Nb$ for all $(u,v) \in E$.
\end{theorem}

With those two results in mind, we can establish the following lemma.

\FunnelingTheHerd*

\begin{proof}
	As $\arena$ is a "population arena tracking $T_f$", by definition it is a finite union of "commit ideals tracking $T_f$", hence we can decompose it as \[\arena = \bigcup_{j=1}^k \trackcommit{\gamma_j}{\vectv_j}{a_j}.\]
	Let $B<\omega$ be the largest constant used to define those "commit ideals". That is, $B = \max (\set{\vectv_j(s) \mid 1 \leq j \leq k, s \in S, \vectv_j < \omega})$.
	
	By definition, $\BoundKT{\arena}{0}{T_f}$ is also a finite union of "commit ideals tracking $T_f$". Let $\trackcommit{\gamma_f}{\vectv}{a}$ with $\vectv \in \set{0, \omega}^S$ be one of them.
	Let $S_\omega$ be the set of states $s$ such that $\vectv(s) = \omega$ and let $d=\card{S_\omega}$. 
	For all $N \in \NN$, define $\trackconf{\gamma_f}{\vectv}[N]$ as a configuration obtained by taking $\gamma_f$ and adding $N$ tokens on each state of $S_\omega$. 
	The set of configurations $\gamma_0$ which satisfy the conclusion of the theorem is clearly stable by token renaming and token removal (because the set of paths in $\BoundKT{\arena}{1}{T_f}$ is stable by token deletion).
	Thus, it is enough to prove it for configurations $\gamma_0$ of the specific form $\trackconf{\gamma_f}{\vectv}[N], N \in \NN$.
	
	Fix $N \in \NN$, we show that a path from $\trackconf{\gamma_f}{\vectv}[N]$ to $\vectF$ in $\BoundKT{\arena}{1}{T_f}$ exists. 
	
	The configuration $\trackconf{\gamma_f}{\vectv}[B \cdot N]$ belongs to the "ideal tracking $T_f$" $\trackideal{\gamma_f}{\vectv}$, which is a subset of $\arena$, hence there is a path $y_0 \xrightarrow{a_1} y_1 \xrightarrow{a_2} \cdots y_n$ within $\arena$ with $y_0 = \trackconf{\gamma_f}{\vectv}[B\cdot N]$ and $y_n \in \vectF$.
	Let $T$ be the set of tokens used in $y_0$.
	
	\begin{figure}[ht]
		\begin{center}
		\input{Figures/fig-lucky-plays.tikz}
		\end{center}
		\caption{An illustration of the proof of Lemma~\ref{lem:onlyone}. 
			We do not describe $\Gamma$ in full, only the two relevant ideals (which by themselves, do not form an  "arena"). We have two actions, $a$ and $b$. We can play $a$ when there are no tokens in $q_2$, $b$ when there are at most $3$, and they let us transfer tokens along the indicated transitions. 
			Those two actions let us transfer arbitrarily many tokens from $q_1$ and $q_3$ to $q_4$. With the notations of the proof, we have $d=2$, $B =3$ and $T_f = \emptyset$. Let $N = 2$.
			The graph $G$ and flow given below represents the path transferring $BN =6$ tokens from $q_1$ and $q_3$ to $q_4$. The corresponding flow has value $dBN=12$.
			On the top right are restricted versions of our ideals, where finite positive numbers have been replaced by $1$: in order to play $b$, we must have at most one token on $q_2$.
			By applying those constraints in $G$, we get a graph where the maximal flow has been divided by at most $3$: by the max flow-min cut theorem and because all capacities have been divided by at most $B=3$. 
			We are thus guaranteed that there is a flow of value $\geq 4$ in this graph (such as the one on the bottom graph), which corresponds to the transfer of $N=2$ tokens from $q_1$ and $q_3$.
		}
	\end{figure}
	
	Consider the following directed graph $G = (V,E)$ and capacity function $c : V \to \Nb$.
	The set of  vertices is \(V = S \times \{0,\ldots, n\} \cup \set{src,tgt} \cup \set{r_s \mid s \in S}\), i.e., one vertex for each state and configuration in the path, plus a source, a target, and one intermediate vertex $r_s$ between $src$ and each $(s,0)$.


	We define the set of edges $E$ and the capacity function $c$ on vertices of $G$ as follows: 
	for all $j \in \{0,\ldots, n-1\}$, we pick a "commit ideal tracking $T_f$" $\trackcommit{\gamma_j}{\vectv_j}{a_{j+1}}$
	in the ideal decomposition of $\arena$ such that $(\gamma_j,a_{j+1})$ is in it.
	We also define $\gamma_n, \vectv_n$ such that $\vectF = \trackideal{\gamma_n}{\vectv_n}$ (they are well-defined since $\vectF$ is an "ideal tracking $T_f$").
	For all $j<n$ and states $s,s'$, there is an edge from $(s,j)$ to $(s', j+1)$ if and only if $\Delta(s,a_{j+1})(s')>0$.
	We also have edges from $src$ to $r_s$, $r_s$ to $(s,0)$ and from $(s,n)$ to $tgt$ for all $s\in S$.
	
	For every state $s$ and index $j$, the vertex $(s,j)$ is assigned capacity $c(s,j) = \vectv_j(s)$.
	Furthermore, for all $s \in S$, $r_s$ has capacity $c(r_s) = B \cdot N$ if $\vectv(s) = \omega$ and $0$ otherwise. 
	Observe that by definition of $B$, we only assigned capacities in $\set{0,\dots, B,\omega}$ to vertices $(s,j)$.

	The trajectories of tokens outside of $T_f$ in the path $y_0 \xrightarrow{a_1} y_1 \xrightarrow{a_2} \cdots \xrightarrow{a_n} y_n$ define a flow $\phi:E\to\+N$ from $src$ to $tgt$ in $G$ of value $dBN$: 
	for each edge $e = ((s,j-1),(s',j))$ we define $\phi(e)$ as the number of tokens of $T \setminus T_f$ going from $s$ to $s'$ at the $j$th step.
	We also define $\phi((src, r_s)) = \phi(r_s, (s,0)) = BN$ for all $s \in S_\omega$ and $0$ otherwise.
	For all $s \in S$, $\phi(((s,n), tgt))$ is the number of tokens of $T \setminus T_f$ in $s$ at the end of the path.
	This flow satisfies the capacity constraints of $G$ by definition of  $(\vectv_j)_{0 \leq j \leq n}$.
	
	Let us now define a new capacity function $c^1$ on $G$, as follows. 
	\begin{itemize}
		\item For all $s \in S$ we set $c^1(r_s) = N$ if $s \in S_\omega$ and $0$ otherwise. 
		
		\item For all other vertices $v \in V \setminus \set{r_s \mid s \in S}$, we set $c^1(v) = 1$ if $c(v) \in \set{1,\dots, B}$ and $c^1(v) = c(v)$ if $c(v) \in \set{0, \omega}$. 
	\end{itemize} 
	
	We claim that $G$ has an integer flow of value $dN$ satisfying the capacity constraint $c^1$. Indeed, suppose the contrary, then by the integer flow theorem all flows have value $<dN$. Therefore, by the max-flow min-cut theorem there is a cut $M$ in $G$ such that $\capacity_{G,c^1}(M)<dN$.
	As $c(v) \leq B\cdot c^1(v)$ for all $v \in V$, that same cut has capacity $\capacity_{G,c}(M)< dBN$ in $G$. By the max-flow min-cut theorem, this contradicts the existence of a flow of value $dBN$ in $(G,c)$.
	We obtain that $(G,c^1)$ has a flow of value $dN$, which is optimal as $\{r_s \mid s\in S\}$ is a cut of value $dN$.
	
	As all capacities defined by $c^1$ are integers, the integer flow theorem guarantees the existence of an optimal integer flow
	$\phi^1:E\to\+N$ of value $dN$, which in turn defines a path of length $n$
	from $\trackconf{\gamma_f}{\vectv}[N]$ to $\vectF$: at step $j$, the tokens of $T_f$ move in the same way as in the step $y_{j-1} \xrightarrow{a_j} y_j$,
	and the number of other tokens sent from state $s$ to $s'$ is $\phi^1((s,j-1),(s',j))$. 
	Note that by definition of $E$ tokens can only move from state $s$ to $s'$ if $\Delta(s,a_j)(s') >0$.

	Observe that the $j$th commit along this path belongs to the "ideal" $\trackcommit{\gamma_{j-1}}{\vectv_{j-1}}{a_j}$
	because the flow $f$ must respect the capacities of vertices $G^1$, which were derived from $\vectv_{j-1}$.
	The path therefore stays in $\arena$.
	In fact, by definition of $c^1$, the $j$th commit belongs to the smaller ideal $\trackcommit{\gamma_{j-1}}{\vectv_{j-1}}{a_j}$, where $\vectv^1_{j-1}$ is $\vectv_{j-1}$ where all finite positive coefficients  have been replaced by $1$.
	Consequently, it belongs to $\BoundKT{\arena}{1}{T_f}$.
	Since all $(r_s)_{s \in S_\omega}$ have capacity $N$, the resulting path starts with $N$ tokens outside $T_f$ in each state of $S_\omega$.
	The path ends in $\vectF$ as the capacities of $(s,n)_{s \in S}$ constrain the final configuration to be in $\trackideal{\gamma_n}{\vectv_n} \subseteq \vectF$.
	This concludes the proof.
\end{proof}

\subsection{Proof of Lemma~\ref{lem:isolation}}
\label{app:isolation}
	
In this section we prove the following lemma.

\IsolationLemma*
~

As $\arena$ is a "population arena", it is a finite union of "commit ideals". As a consequence, there exists a bound $B$ such that  $\arena =\BoundK{\arena}{B}$; $B$ is the largest finite number appearing in the description of $\arena$.
Let $T$ be the set of tokens of $\gamma_{init}$.

Denote $T_\omega = T \setminus T_f$,
and $S_\omega$ the states occupied in $\gamma_{init}$ by tokens in $T_\omega$ (i.e., $S_\omega=\gamma_{init}(T_\omega)$).
By hypothesis, $S_\omega$ is an "$\omega$-base" of $\gamma_{init}$ in $\arena$.
Let $M \in \NN$, and $\gamma_M$ be a configuration obtained from $\gamma_{init}$ by adding $M$ tokens for each token in $T_\omega$, on the state of that token. A state that contains $K$ tokens of $T_\omega$ in $\gamma_{init}$ contains those tokens plus $MK$ extra tokens in $\gamma_M$.
We denote $T_M$ the set of all these new tokens (i.e. the tokens of $\gamma_M$ which are not in $\gamma_{init}$).
In total the set of tokens of $\gamma_M$ is $T_f \cup T_\omega \cup T_M$. 
Since we only added tokens on states of $S_\omega$, we have $\gamma_M \in \arena$.

We say a token $t$ is \intro{saved} in a configuration if the singleton $\{t\}$ is an "$\omega$-base" of that configuration, i.e., if we can add as many tokens as we want on the state occupied by $t$ without leaving $\arena$.
By definition of $B$, if in a configuration of $\arena$ a token $t$ occupied a state on which there are at least $B$ other tokens,
 then $t$ is necessarily "saved" in this configuration.
Observe that, as a consequence, in a configuration of $\arena$ there can be at most $B|S|$ tokens which are not "saved".

Define the following sets of configurations:
\begin{itemize}
	\item $X$ be the set of configurations with a "finite base" of size $< k$
	
	\item $Z$ the set of configurations where one of the tokens in $T_f$
	is "saved".
\end{itemize}

\newcommand{\srw}{\sigma_{rw}}

We show first that a safe random walk in $\arena$ guarantees almost surely to reach $Z$ when starting in $\gamma_M$.
Observe that a token that is not "saved" has to be part of every finite base.
Therefore we have the inclusion $X \subseteq Z$.
Note that this inclusion may be strict. Indeed, it might be the case that there is a "finite base" not containing $t$ but containing some tokens of $T_\omega \cup T_M$ (albeit, initially, all tokens in $T_\omega \cup T_M$ belong to an "$\omega$-base" and are "saved", but this might not be the case all along the path, i.e. some of them might ``leave the cohort'').
By hypothesis of the lemma, from every configuration in $\arena$ there is a strategy to reach $X$ without leaving $\arena$.
Let $\srw$ be a "safe random walk" in $\arena$. 
A consequence is that $\srw$ guarantees to reach $X$ almost surely from everywhere in $\arena$.
In particular, $\srw$ guarantees to reach $Z$ almost surely from $\gamma_M$.

\begin{lemma}
	For all $\gamma_0 \in \arena$, for all token $t$ in a finite base of $\gamma_0$, the probability, when applying $\srw$, that $t$ "meets" $\geq B|S|$ tokens while not being "saved" is at most $\frac{(B|S|)!}{(B|S|)!+1}$.	
\end{lemma}

\begin{proof}
		\AP Recall that we say that two tokens $t_1,t_2\in T$ 
	\reintro{meet} in a configuration $\gamma \in S^T$
	if they are placed on the same state i.e. if $\gamma(t_1)=\gamma(t_2)$.
	
	Let $\pi = \gamma_0  \xrightarrow{a_1}  \dots \xrightarrow{a_k} \gamma_k$ be a "path" from $\gamma_0$, and $t \in T$. 
	\AP The ""heard-of sequence"" $H(\pi, t)$ is the sequence of sets of tokens $H_0, \dots, H_k$, defined as follows:
	\begin{itemize}
		\item $H_0 = \set{t' \in  T \mid \gamma_0(t') = \gamma_0(t)}$.
		\item $H_{i} = \set{t' \in T \mid \exists t'' \in H_{i-1}, \gamma_i(t') = \gamma_i(t'')}$
	\end{itemize} 
	It indicates, at each step, the set of tokens which have met a token which has met a token ... which has met $t$.
	We say that a token $t'$ ""hears of"" $t$ in a configuration $\gamma_i$ if $t' \in H_i \setminus H_{i-1}$. If $t' \in H_i$ we say $t_i$ has heard of $t$.
	
	Let $\gamma_0$ be a configuration, $T_0$ a "finite base" of it, and $t \in T_0$.  
	Our first goal is to show an upper bound on the probability that a token which has "heard of" $t$ is "saved" before $t$ itself.
	Let $\PP_{\srw, \gamma_0}$ be the probability distribution induced by $\srw$ on the paths from $\gamma_0$ in $\Gamma$.
	
	\newcommand{\gb}{\overline{\gamma}}
	
	We define a transformation on paths.
	Given $\pi = \gamma_0 \to \dots \to \gamma_k$ a path over a set of tokens $T$, 
	and $t, t'\in T$ such that $t$ and $t'$ "meet" in some $\gamma_j$,
	we define a new path $\pi^{t \leftrightarrow t'}$ as follows.
	In words, $\pi^{t \leftrightarrow t'}$ is the run obtained from $\pi$ by switching the trajectories of $t$ and $t'$ after their first meeting. Formally, 
let $j$ be the minimal index such that $\gamma_j(t) = \gamma_j(t')$.
	Let $\tau_{t,t'}: T \to T$ be the function mapping $t$ to $t'$, $t'$ to $t$ and all other tokens to themselves, i.e., the transposition swapping $t$ and $t'$.
	We set $\pi^{t \leftrightarrow t'} = \gb_0 \to \dots \to \gb_k$
where $\gb_i = \gamma_i$ for all $i\leq j$, and $\gb_i = \gamma_i \circ \tau_{t,t'}$ for all $i > j$.

	\begin{claim}
		\label{claim:heard-of-facts}
		For all finite path $\pi$ from $\gamma_0$, $t' \in T$ which "meets" $t$ at some point in $\pi$:
		\begin{enumerate}
			\item $(\pi^{t \leftrightarrow t'})^{t \leftrightarrow t'} = \pi$
			
			\item $\PP_{\srw,\gamma_0}(\pi^{t \leftrightarrow t'}) = \PP_{\srw,\gamma_0}(\pi)$
			
			\item $H(\pi^{t \leftrightarrow t'},t) = H(\pi, t)$
		\end{enumerate}
	\end{claim}
	
	\begin{claimproof}
		\begin{enumerate}
			\item It suffices to observe that if $t$ and $t'$ first "meet" in the $j$th configuration of $\pi$, then they also first "meet" in the $j$th configuration of $\pi^{t \leftrightarrow t'}$.
			
			\item Since $\srw$ is a "safe random walk" in $\arena$, and $\arena$ is invariant under renaming tokens, the probability to go from $\gamma_i \circ \tau_{t,t'}$ to $\gamma_{i+1} \circ \tau_{t,t'}$ in a step is the same as the probability to go from $\gamma_i$ to $\gamma_{i+1}$.
			Note that if $\gamma_i(t) = \gamma_i(t')$ then $\gamma_i = \gamma_i \circ \tau_{t,t'}$.
			A simple induction on the run length shows that $\PP_{\srw,\gamma_0}(\pi^{t \leftrightarrow t'}) = \PP_{\srw,\gamma_0}(\pi)$.
			
			\item This follows from a simple induction on the run length.\claimqedhere
		\end{enumerate}
	\end{claimproof}

	Let $p$ be the probability that a token which has "heard of" $t$ is "saved".
	Let $\Pi$ be the set of finite runs such that a token which has "heard of" $t$ is "saved" in the last configuration, and no token having "heard of" $t$ is saved beforehand.
	Let $\Pi_t$ be the subset of $\Pi$ such that $t$ is "saved" in the last configuration, and $\Pi' = \Pi \setminus \Pi_t$. We obtain
	
	\[ p = \sum_{\pi \in \Pi} \PP_{\srw,\gamma_0}(\pi) = \sum_{\pi \in \Pi_t} \PP_{\srw,\gamma_0}(\pi) + \sum_{\pi \in \Pi'} \PP_{\srw,\gamma_0}(\pi).\]
	

	\begin{claim}
		\label{claim:switches}
		Let $\pi \in \Pi$, and $\pi^-$ the path obtained by removing its last step.
		Let $TH$ be the set of tokens which "hear of" $t$ in $\pi^-$.
		Then $|TH| \leq B|S|$.
		
		Furthermore, there exist distinct tokens $t_1, \dots, t_k \in TH$ and runs $\pi_0, \dots, \pi_k$ such that $\pi = \pi_0$, $\pi_k \in \Pi_t$, and for all $i$ we have $(\pi_i)^{t \leftrightarrow t_i} = \pi_{i+1}$.
	\end{claim}

	\begin{claimproof}
		We first prove that $|TH| \leq B|S|$.
		By definition of $\Pi$, no token which has "heard of" $t$ is "saved" in $\pi^-$.
		In particular,  in the last configuration of $\pi_-$ no token which has "heard of" $t$ is "saved".  
		By definition of $B$, at most $B|S|$ tokens can be not "saved" in a configuration.
		Therefore, $|TH| \leq B|S|$.

		For the second part of the claim, 
		Define $\pi = \gamma_1 \to \cdots \to \gamma_m$.
		By definition of $\Pi$, there is a token $t'$ which "hears of" $t$ in $\pi$ and is "saved" in the last configuration.
		We can assume without loss of generality that $t'$ "hears of" $t$ in $\pi^-$ (i.e.,$t' \in TH$): a token which "hears of" $t$ only in the last configuration must be in the same state as a token which has "heard of" $t$ before.
		Therefore, there is a sequence of distinct tokens $t_0, \dots, t_k \in TH$ and indices $i_1 < \dots < i_k$ such that $k \leq B|S|$, $t = t_0$, $t_k = t'$ and for all $j$, $t_{j-1}$ and $t_{j}$ "meet" in $\gamma_{i_j}$.

		We define $\pi_0 = \pi$, and for all $j < k$, $\pi_{j+1} = (\pi_j)^{t \leftrightarrow t_j}$.
		An easy induction on $j$ shows that $t$ and $t_{j}$ "meet" in the $i_j$th configuration of $\pi_j$. 
		Therefore, all $\pi_j$ are well-defined.
		Moreover, in $\pi_k$ token $t$ must eventually follow the trajectory of $t_k$ in $\pi$, and is therefore saved at the end.
		The "heard-of sequence" remains the same by Claim~\ref{claim:heard-of-facts}.
		Since
		 all we did was switch trajectories of tokens that are not "saved" before the last configuration, no token which has heard of $t$ is "saved" before the last configuration in $\pi_k$.
		 As a result, $\pi_k \in \Pi'$.
	\end{claimproof}

	\begin{figure}[ht]
		\centering
				\begin{tikzpicture}[node distance=1.8cm,auto,>= triangle 45, scale=0.5]

		
			\foreach \y in {1,...,5}{
				\node[label] (q\y) at (17.2,\y) {$q_{\y}$};
			\foreach \x in {1,...,6}{
				\pgfmathparse{\x+8}
				\node[minimum size=0mm] (n\x\y)  at (2*\pgfmathresult,\y) {};
				%
			}	
		}
		
			\fill[fill=gray!20!white] (n14.north) -- (n23.north) -- (n23.south) -- (n14.south); 
		
		\fill[fill=gray!20!white]  (n33.north) --  (n33.south) -- (n23.south)   -- (n23.north); 
		\fill[fill=gray!20!white]  (n35.north)  -- (n35.south) -- (n23.south) -- (n23.north) ; 
		
		\fill[fill=gray!20!white]  (n33.north) --  (n33.south) -- (n41.south)   -- (n41.north); 
		\fill[fill=gray!20!white]  (n35.north)  -- (n35.south) -- (n45.south) -- (n45.north) ;
		\fill[fill=gray!20!white]  (n33.north) --  (n33.south) -- (n42.south)   -- (n42.north); 
		
		\fill[fill=gray!20!white]  (n51.north) --  (n51.south) -- (n41.south)   -- (n41.north); 
		\fill[fill=gray!20!white]  (n55.north)  -- (n55.south) -- (n45.south) -- (n45.north) ; 
		\fill[fill=gray!20!white]  (n51.north) --  (n51.south) -- (n42.south)   -- (n42.north); 
		
		\fill[fill=gray!20!white]  (n51.north) --  (n51.south) -- (n61.south)   -- (n61.north); 
		\fill[fill=gray!20!white]  (n55.north)  -- (n55.south) -- (n64.south) -- (n64.north) ;

		\foreach \y in {1,...,5}{
			\foreach \x in {9,...,14}{
				\draw[very thick, fill=white] (2*\x,\y) circle (2mm);
			}	
		}


			\draw[color=col2, ultra thick] (n12) -- (n21) -- (n32) -- (n44) -- (n52) -- (n61);
		
		\draw[loosely dashed, color=col4, ultra thick] (n14) -- (n22) -- (n32);
		\draw[color=col4, ultra thick] (n32) -- (n43) -- (n53) -- (n65);
		
		\draw[color=col1, ultra thick] (n44) -- (n54) -- (n63);
		\draw[loosely dashed, color=col1, ultra thick] (n13) -- (n24) -- (n35) -- (n44);
		
		\draw[color=col3, ultra thick] (n52) -- (n62);
		\draw[loosely dashed, color=col3, ultra thick] (n15) -- (n24) -- (n34) -- (n42) -- (n52) ;

		\node (arr) at (14.5,10) {\rotatebox{330}{\Huge $\Rightarrow$}};
		\node (lab) at (14.5,11) { \rotatebox{330}{\color{col2} $\bullet$ \color{black} $\leftrightarrow$ \color{col4} $\bullet$}};
		
		
		\foreach \y in {1,...,5}{
			\pgfmathparse{\y+9}
			\node[label] (q\y) at (1.2,\pgfmathresult) {$q_{\y}$};
			\foreach \x in {1,...,6}{
				\node[minimum size=0mm] (n\x\y)  at (2*\x,\pgfmathresult) {};
%
			}	
		}

		\fill[fill=gray!20!white] (n14.north) -- (n23.north) -- (n23.south) -- (n14.south); 
		
		\fill[fill=gray!20!white]  (n33.north) --  (n33.south) -- (n23.south)   -- (n23.north); 
		\fill[fill=gray!20!white]  (n35.north)  -- (n35.south) -- (n23.south) -- (n23.north) ; 
		
		\fill[fill=gray!20!white]  (n33.north) --  (n33.south) -- (n41.south)   -- (n41.north); 
		\fill[fill=gray!20!white]  (n35.north)  -- (n35.south) -- (n45.south) -- (n45.north) ;
		\fill[fill=gray!20!white]  (n33.north) --  (n33.south) -- (n42.south)   -- (n42.north); 
		
		\fill[fill=gray!20!white]  (n51.north) --  (n51.south) -- (n41.south)   -- (n41.north); 
		\fill[fill=gray!20!white]  (n55.north)  -- (n55.south) -- (n45.south) -- (n45.north) ; 
		\fill[fill=gray!20!white]  (n51.north) --  (n51.south) -- (n42.south)   -- (n42.north); 
		
		\fill[fill=gray!20!white]  (n51.north) --  (n51.south) -- (n61.south)   -- (n61.north); 
		\fill[fill=gray!20!white]  (n55.north)  -- (n55.south) -- (n64.south) -- (n64.north) ; 
		
		\foreach \y in {10,...,14}{
			\foreach \x in {1,...,6}{
				\draw[very thick, fill=white] (2*\x,\y) circle (2mm);
			}	
		}
		
		
	\draw[color=col2, ultra thick] (n12) -- (n21) -- (n32) -- (n43) -- (n53) -- (n65);

	\draw[color=col4, ultra thick] (n32) -- (n44) -- (n54) -- (n63);
	\draw[loosely dashed, color=col4, ultra thick] (n14) -- (n22) -- (n32);
	
	\draw[color=col1, ultra thick] (n44) -- (n52) -- (n62);
	\draw[loosely dashed, color=col1, ultra thick] (n13) -- (n24) -- (n35) -- (n44);
	
	\draw[color=col3, ultra thick] (n52) -- (n61);
	\draw[loosely dashed, color=col3, ultra thick] (n15) -- (n24) -- (n34) -- (n42) -- (n52) ;

	\node (arr) at (14.5,7) {\rotatebox{210}{\Huge $\Rightarrow$}};
	\node (lab) at (14.5,8) { \rotatebox{210}{\color{col2} $\bullet$ \color{black} $\leftrightarrow$ \color{col1} $\bullet$}};

		\foreach \y in {1,...,5}{
			\pgfmathparse{\y+6}
			\edef\myvar{\pgfmathresult}
			\node[label] (q\y) at (17.2,\pgfmathresult) {$q_{\y}$};
			\foreach \x in {1,...,6}{
				\pgfmathparse{\x+8}
				\node[minimum size=0mm] (n\x\y)  at (2*\pgfmathresult,\myvar) {};
				%
			}	
		}

	\fill[fill=gray!20!white] (n14.north) -- (n23.north) -- (n23.south) -- (n14.south); 
	
	\fill[fill=gray!20!white]  (n33.north) --  (n33.south) -- (n23.south)   -- (n23.north); 
	\fill[fill=gray!20!white]  (n35.north)  -- (n35.south) -- (n23.south) -- (n23.north) ; 
	
	\fill[fill=gray!20!white]  (n33.north) --  (n33.south) -- (n41.south)   -- (n41.north); 
	\fill[fill=gray!20!white]  (n35.north)  -- (n35.south) -- (n45.south) -- (n45.north) ;
	\fill[fill=gray!20!white]  (n33.north) --  (n33.south) -- (n42.south)   -- (n42.north); 
	
	\fill[fill=gray!20!white]  (n51.north) --  (n51.south) -- (n41.south)   -- (n41.north); 
	\fill[fill=gray!20!white]  (n55.north)  -- (n55.south) -- (n45.south) -- (n45.north) ; 
	\fill[fill=gray!20!white]  (n51.north) --  (n51.south) -- (n42.south)   -- (n42.north); 
	
	\fill[fill=gray!20!white]  (n51.north) --  (n51.south) -- (n61.south)   -- (n61.north); 
	\fill[fill=gray!20!white]  (n55.north)  -- (n55.south) -- (n64.south) -- (n64.north) ;

		\foreach \y in {7,...,11}{
			\foreach \x in {9,...,14}{
				\draw[very thick, fill=white] (2*\x,\y) circle (2mm);
			}	
		}
		
		
			\draw[color=col2, ultra thick] (n12) -- (n21) -- (n32);
		\draw[color=col2, ultra thick] (n32) -- (n44) -- (n54) -- (n63);
		
		\draw[loosely dashed, color=col4, ultra thick] (n14) -- (n22) -- (n32);
		\draw[color=col4, ultra thick] (n32) -- (n43) -- (n53) -- (n65);
		
		\draw[color=col1, ultra thick] (n44) -- (n52) -- (n62);
		\draw[loosely dashed, color=col1, ultra thick] (n13) -- (n24) -- (n35) -- (n44);
		
		\draw[color=col3, ultra thick] (n52) -- (n61);
		\draw[loosely dashed, color=col3, ultra thick] (n15) -- (n24) -- (n34) -- (n42) -- (n52) ;

		\node (arr) at (14.5,4) {\rotatebox{330}{\Huge $\Rightarrow$}};
		\node (lab) at (14.5,5) { \rotatebox{330}{\color{col2} $\bullet$ \color{black} $\leftrightarrow$ \color{col3} $\bullet$}};
		
		
		\foreach \y in {1,...,5}{
			\pgfmathparse{\y+3}
			\node[label] (q\y) at (1.2,\pgfmathresult) {$q_{\y}$};
			\foreach \x in {1,...,6}{
				\node[minimum size=0mm] (n\x\y)  at (2*\x,\pgfmathresult) {};
			}	
		}
		
		
		\fill[fill=gray!20!white] (n14.north) -- (n23.north) -- (n23.south) -- (n14.south); 
		
		\fill[fill=gray!20!white]  (n33.north) --  (n33.south) -- (n23.south)   -- (n23.north); 
		\fill[fill=gray!20!white]  (n35.north)  -- (n35.south) -- (n23.south) -- (n23.north) ; 
		
		\fill[fill=gray!20!white]  (n33.north) --  (n33.south) -- (n41.south)   -- (n41.north); 
		\fill[fill=gray!20!white]  (n35.north)  -- (n35.south) -- (n45.south) -- (n45.north) ;
		\fill[fill=gray!20!white]  (n33.north) --  (n33.south) -- (n42.south)   -- (n42.north); 
		
		\fill[fill=gray!20!white]  (n51.north) --  (n51.south) -- (n41.south)   -- (n41.north); 
		\fill[fill=gray!20!white]  (n55.north)  -- (n55.south) -- (n45.south) -- (n45.north) ; 
		\fill[fill=gray!20!white]  (n51.north) --  (n51.south) -- (n42.south)   -- (n42.north); 
		
		\fill[fill=gray!20!white]  (n51.north) --  (n51.south) -- (n61.south)   -- (n61.north); 
		\fill[fill=gray!20!white]  (n55.north)  -- (n55.south) -- (n64.south) -- (n64.north) ;

		\foreach \y in {4,...,8}{
			\foreach \x in {1,...,6}{
				\draw[very thick, fill=white] (2*\x,\y) circle (2mm);
			}	
		}

	\draw[color=col2, ultra thick] (n12) -- (n21) -- (n32) -- (n44) -- (n52) -- (n62);
	
	\draw[loosely dashed, color=col4, ultra thick] (n14) -- (n22) -- (n32);
	\draw[color=col4, ultra thick] (n32) -- (n43) -- (n53) -- (n65);
	
	\draw[color=col1, ultra thick] (n44) -- (n54) -- (n63);
	\draw[loosely dashed, color=col1, ultra thick] (n13) -- (n24) -- (n35) -- (n44);
	
	\draw[color=col3, ultra thick] (n52) -- (n61);
	\draw[loosely dashed, color=col3, ultra thick] (n15) -- (n24) -- (n34) -- (n42) -- (n52) ;

		\end{tikzpicture}	
		\caption{A visual of the proof of Claim~\ref{claim:switches}. 
			Thin lines describe the trajectory of single tokens: the blue one is the trajectory of a token $t$, and others the ones of tokens which hear of $t$ at some point along the path. The wide gray line describes the movement of the cohort. Tokens which are not in the cohort but do not hear of $t$ are ignored.
			Lines of other tokens are dashed before the token hears of $t$, and full afterwards.}
		\centering
	\end{figure}

	\begin{claim}
		$(B|S|)(B|S|)! \sum_{\pi \in \Pi_t} \PP_{\srw,\gamma_0}(\pi) \geq  \sum_{\pi' \in \Pi'} \PP_{\srw,\gamma_0}(\pi')$. 
	\end{claim}
	
	\begin{claimproof}
		Let $P_\Pi$ be the countable set $\set{r \in ]0,1] \mid \exists \pi \in \Pi, \PP_{\srw,\gamma_0}(\pi) = r}$.
		We have $\sum_{\pi' \in \Pi'} \PP_{\srw,\gamma_0}(\pi') = \sum_{r \in P_\Pi} r \cdot |\set{\pi' \in \Pi' \mid \PP_{\srw,\gamma_0}(\pi') = r}|$.
		
		Let $r \in P_\Pi$.
		Consider the set $X_r$ of tuples $(\pi, t_1, \ldots, t_k)$ where $\pi$ is a path of $\Pi_t$ with $\PP_{\srw,\gamma_0}(\pi)=r$ and $t_1, \ldots, t_k$ is a list of distinct tokens which "hear of" $t$ in $\pi$ before the last configuration. 
		Note that $X_r$ is a finite set, since the events $(\pi)_{\pi \in \Pi}$ are mutually exclusive and $r>0$.
		
		By Claim~\ref{claim:switches}, each path of $\Pi$ (in particular of $\Pi_t$) has at most $B|S|$ such tokens. As a consequence, for a given $\pi$ there are at most $(B|S|)(B|S|)!$ possibilities for $t_1, \ldots, t_k$ (choose $k$ and then choose the sequence). Hence $|X_r| \leq |\set{\pi \in \Pi_t \mid \PP_{\srw,\gamma_0}(\pi)=r}|(B|S|)(B|S|)!$.
		
		By Claim~\ref{claim:switches}, every path $\pi \in \Pi$ can be turned into a path of $\Pi_t$ through a sequence of switches of $t$ with distinct tokens which "hear of" $t$ in $\pi$ before the last configuration. 
		By Claim~\ref{claim:heard-of-facts}, this means that every path $\pi \in \Pi$ can be obtained from a path $\pi_t \in \Pi_t$ through a sequence of switches of $t$ with distinct tokens which "hear of" $t$ before the last configuration in $\pi_t$. Furthermore, $\PP(\pi_t) = \PP(\pi)$.
		
		This gives us a natural surjection from $X_r$ to $\set{\pi' \in \Pi' \mid \PP_{\srw,\gamma_0}(\pi') = r}$, meaning that $|\set{\pi' \in \Pi' \mid \PP_{\srw,\gamma_0}(\pi') = r}| \leq |X_r| \leq |\set{\pi \in \Pi_t \mid \PP_{\srw,\gamma_0}(\pi)=r}|(B|S|)(B|S|)!$. 
		
		Finally, from the equality at the beginning of the proof we get
		\begin{align*}
			\sum_{\pi' \in \Pi'} \PP_{\srw,\gamma_0}(\pi') 
			&\leq \sum_{r \in P_\Pi} r \cdot \abs{\set{\pi \in \Pi_t : \PP_{\srw,\gamma_0}(\pi)=r}} (B\abs{S})(B\abs{S})!\\
			&= (B|S|)(B|S|)! \sum_{\pi \in \Pi_t} \PP_{\srw,\gamma_0}(\pi).
		\end{align*}
			\claimqedhere
	\end{claimproof}
	The previous claim implies that
	\[ p \geq \frac{(B|S|)!+1}{(B|S|)!}\sum_{\pi \in \Pi'} \PP_{\srw,\gamma_0}(\pi).\]
	Now, since $p \leq 1$, we obtain $\sum_{\pi \in \Pi'} \PP_{\srw,\gamma_0}(\pi)\leq \frac{(B|S|)(B|S|)!}{(B|S|)(B|S|)!+1}$.
	
	As noted before, the number of tokens that are not saved in a configuration of $\arena$ is at most $B|S|$.
	Hence, if in a finite run $\pi$ we have that $t$ "meets" $B|S|$ distinct tokens and is not "saved" then one of them must be saved at some point after meeting $t$.
	Thus $\pi$ must have a prefix in $\Pi'$.
\end{proof}

The safe random walk $\srw$ is positional, i.e., the probability distribution chosen in a configuration is independent from what happened before in the path.
For all $\ell\geq 1$, the probability for $t$ to "meet"  $\geq B|S| \ell$ tokens is at most the probability to meet $B|S|$ tokens, then $B|S|$ others, and so on $\ell$ times.
An easy induction then shows that the probability for $t$ to "meet" more than $B|S|\ell$ tokens is bounded by  $\left(\frac{(B|S|)(B|S|)!}{(B|S|)(B|S|)!+1}\right)^\ell$.
We infer that the expected number of different tokens met by $t$ is at most \[\sum_{\ell \in \NN} B\cdot |S| \cdot (\ell+1) \cdot \left(\frac{(B|S|)(B|S|)!}{(B|S|)(B|S|)!+1}\right)^\ell= B\cdot |S| \cdot ((B|S|)(B|S|)!+1)^2.\]

	We will now bound the probability that a token of $T_\omega$ either meets a token of $T_f$ before we reach $Z$, or is in a state with $\leq B$ tokens at the end.
	We say that a token that does one of those two things ""misbehaves"".
	By definition of $B$, we have $|T_f|\leq B$.
	By linearity of the expected value, the number of tokens met by at least one token of $T_f$ is at most $ B^2\cdot |S|^2 \cdot ((B|S|)!+1)^2$.
	Furthermore, when $Z$ is reached the number of tokens which are in states with $\leq B$ tokens at the end is bounded by $B|S|$.
	Hence the expected number of "misbehaved" tokens is at most $B^2 |S|  ((B|S|)!+1)^2 + B|S|$.

	Let $h$ be a token of $T_M$, we can then bound its probability to "misbehave".			
	As $h$ starts in the same state as $M$ tokens of $T_M$, and $\srw$ treats all those tokens symmetrically, 
	they all have the same probability $p_{mis}$ of "misbehaving".
	By linearity of the expected value, the expected number of them which "misbehave" is $(M+1) \cdot p_{mis}$.
	On the other hand, this number is bounded by the expected total number of tokens "misbehaving".
	We obtain $M p_{mis} \leq B^2 |S|^2  ((B|S|)!+1)^2 + B|S|$ and thus $p_{mis} \leq \frac{B^2 |S|^2  ((B|S|)!+1)^2 + B|S|}{M}$. 
	As a result, the probability that a token $h \in T_\omega$ misbehaves converges to $0$ as $M$ grows.

	Recall that $T_\omega$ denotes the set of tokens in $S_\omega$ in $w_0$,
	and $T_M$ the set of additional tokens in $S_\omega$ in  $w_M$.
	The "safe random walk" $\srw$ from $w_M$ can be projected onto a strategy from $w_0$ by simulating the additional tokens in $T_M$.
	The resulting strategy is randomized, and uses a finite memory in order to store the 
	positions of all simulated tokens in $T_M$. Note that this strategy still guarantees to reach $X$, and thus $Z$, almost surely.
	By choosing $M$ large enough,
	the probability that a token in $T_\omega$ "misbehaves" can be made arbitrarily small.

	Since the set of configurations reachable from $w_0$ is finite,
	and the corresponding condition is a reachability condition under safety constraint, 
	the probability can be turned to $0$: 
	there is a memoryless strategy that achieves a probability $1$ of reaching $Z$~\cite{Ornstein:AMS1969} and a probability $0$ of a token in $T_\omega$ "misbehaving". 
	
	Reaching $Z$ with no token "misbehaving" implies that each token of $T_\omega$ is in a state with at least $B$ other tokens.
	As a consequence, it cannot be part of a "finite base" of the current configuration.
	Since we are in $Z$, any "finite base" of this configuration has to be a strict subset of $T_f$.
	
	We have shown that there exists a strategy to reach a configuration with a "finite base" strictly included in $T_f$ almost surely, while making sure that no token of $T_f$ meets a token outside $T_f$ beforehand.

\subsection{Proof of Theorem~\ref{thm:cortosconj}}
\label{app:induction}

	\knowledgenewrobustcmd{\closure}[2]{\cmdkl{\overline{#1}^{#2}}}

\MainThm*

In this section we combine the funnelling lemma (Lemma~\ref{lem:onlyone}) 
and the isolation lemma (Lemma~\ref{lem:isolation}) in order
to almost surely gather the individual tokens back into the cohort,
while keeping the total number of individuals bounded.
This section is the most technical part of the paper.

In order to articulate those lemmas together, we need the following definitions.
Throughout the proofs, we track specific subsets of tokens.
However, we sometimes need to re-anonymise some of them: for instance, to apply Lemma~\ref{lem:isolation}, we need $\arena$ to be a "population arena", which does not track any specific token.
This is why we introduce the following closure operation, which closes an "arena" under renaming and removing tokens (apart from a set $T_f$), making it a "population arena" (tracking $T_f$).

\begin{definition}
	Let $\arena$ be an "arena" and $T_f$ a finite set of tokens. 
	We define $\intro*\closure{\arena}{T_f}$ as its closure under renaming and deleting tokens outside $T_f$. 
	When $T_f = \emptyset$ we simply write $\closure{\arena}{}$.
	
	Formally, for all configuration $\gamma \in \arena$ we define $\phi_{T_f}(\gamma)$ as the pair $(\gamma_f, \vectv) \in S^T \times \NN^S$, where $\gamma_f$ is the projection of $\gamma$ on $T_f$ and $\vectv$ counts the number of other tokens in each state.
	Then we define $\closure{\arena}{T_f} = \bigcup_{(\gamma_f,\vectv) \in \phi_{T_f}(\arena)} \trackideal{\gamma_f}{\vectv}$.
		%
\end{definition}

\begin{lemma}
	\label{lem:closure-pop}
	Let $T_f$ be a finite set of tokens, let $F$ be a union of "ideals tracking $T_f$".
	Let $\arena$ be a "winning@@arena" "arena" with respect to $F$.
	Then $\closure{\arena}{T_f}$ is a  "population arena tracking $T_f$", and is "winning@@arena" with respect to $F$.
\end{lemma}

\begin{proof}
	The fact that $\closure{\arena}{T_f}$ is an "arena" is straightforward.
	Let $\gamma_f \in S^{T_f}$. 
	By Dickson's lemma~\cite{dickson1913finiteness}, for each fixed $\gamma_f$ there are finitely many maximal vectors $\vectv$ such that $(\gamma_f,\vectv) \in \phi_{T_f}(\arena)$.
	Since there are finitely many configurations in $S^{T_f}$, this implies that the set  $\bigcup_{(\gamma_f,\vectv) \in \phi_{T_f}(\arena)} \trackideal{\gamma_f}{\vectv}$ is in fact a finite union of "ideals tracking $T_f$".
	There exists a finite set $V \subseteq \Nb^S$ such that $\closure{\arena}{T_f} = \bigcup_{(\gamma_f,\vectv) \in V} \trackideal{\gamma_f}{\vectv}$.
	
	Consequently, $\closure{\arena}{T_f}$ is a "population arena tracking $T_f$". It is "winning@@arena" as a consequence of the fact that $F$ is a union of "ideals tracking $T_f$", and is thus closed under renaming and deleting tokens outside $T_f$.
\end{proof}

We will now tackle the induction on the number of groups of individual tokens that lets us prove Theorem~\ref{thm:cortosconj}.
\AP In all that follows we call a set of tokens ""secluded"" in a configuration $w$ if every state contains either only tokens of that set or no token of that set.

\begin{remark}
	\label{rem:dummy}
	We can assume without loss of generality that $\Sigma$ contains a letter $\dum$ that labels a loop on each state of $\A$, and no other transition. Adding that letter clearly does not affect the answer to the \problem.
\end{remark}

Finally, we naturally extend definitions of "$\omega$-base" and "finite base" to commits naturally: a set of states $S_\omega$ is an $\omega$-base of $(\gamma,a)$ in an "arena" $\arena$ if $\ideal{(\gamma[{S_\omega * \omega}],a)} \subseteq \arena$, with $\gamma[{S_\omega * \omega}]$ the "symbolic configuration" obtained from $\abs{\gamma}$ by replacing the coefficients in all states of $S_\omega$ with $\omega$. 
A set of tokens $T_\omega$ occupying a set of states $S_\omega$ is an $\omega$-base of $(\gamma,a)$ if $S_\omega$ is an "$\omega$-base" of $(\gamma,a)$. A set of tokens $T_f$ is a finite base of $(\gamma,a)$ if its complement is an $\omega$-base.

\begin{restatable}[Induction]{lemma}{InductionLemma}\label{lem:recstartfine}
	
	Let $T_1, \ldots, T_d$ be disjoint non-empty sets of tokens of size at most $|S|$. Let $T_f = \bigcup_{i=1}^d T_i$.
	Let $I$ be a set of initial configurations containing at least one token of each $T_i$. 
	Let $F$ be a finite union of "ideals tracking $T_f$" of the form $\trackideal{\gamma_f}{\vectv}$ with $\vectv \in \set{0,\omega}^S$.

	Let $\arena$ be a "population arena tracking $T_f$" such that:
	\begin{itemize}
		\item  $I \subseteq \arena$, and for all $\gamma \in I$,  $T_f$ is a "finite base" of $\gamma$ in $\arena$, and
		
		\item for all $\gamma \in \arena \setminus F$, for all $s$ and $T_i$, either $\gamma^{-1}(s) \subseteq T_i$ or $\gamma^{-1}(s) \cap T_i =\emptyset$,and
		
		\item $\arena$ is "winning@@arena" with respect to reaching $F$
		
	\end{itemize}
	
	Then there exists a "winning@@arena" "sub-arena" $Y$ of $\arena$ such that $I \subseteq Y$ and $Y = \BoundKT{Y}{|S|}{T_f}$. 
	
\end{restatable}

\begin{proof}
	We proceed by induction on $|S|-d$.
	
	Suppose $|S| = d$.
	By hypothesis on $\arena$, the tokens of $T_1, \ldots, T_d$ occupy disjoint sets of states in every configuration of $\arena \setminus F$.
	Furthermore, there is at least one token of each set and no other token can be on the states they occupy. 
	Hence for all $\gamma \in \Gamma \setminus F$, the only tokens in $\gamma$ are the ones of $T_1, \ldots, T_d$, and they each occupy one state.
	As a consequence, we have $\arena = \BoundKT{\arena}{|S|}{T_f}$.
	Hence setting $Y = \arena$ yields the result.

	Now suppose $|S| > d$. 
	By Lemma~\ref{lem:onlyone}, for all $\gamma_0 \in \BoundKT{\arena}{0}{T_f}$ there is a path in $\arena$ from $\gamma_0$ to $F$ along which every commit is in $\BoundKT{\arena}{1}{T_f}$.

	Let $V$ be the set of configurations $v \in \arena$ such that there is a non-empty set $T^v$ of at most $|S|$ tokens such that:
	\begin{itemize}
		\item $T_f \cup T^v$ is a finite base of $v$, and
		
		\item  all $T_i$ and $T^v$ are "secluded" in $v$.
	\end{itemize}
	For each $v \in V$, we choose such a set $T_v$ of minimal size.
	We also define $F^v$ the set of configurations of $\arena$ with a strict subset of $T_f \cup T^v$ as a "finite base".
	
	Note that as all $T_i$ are "secluded" in every configuration of $\arena \setminus F$, they must be contained in every "finite base" of every configuration of $\arena \setminus F$. 
	Indeed, adding tokens on states occupied by tokens of $T_i$ immediately gets us out of $\arena$.
	As a consequence, every configuration of $F^v$ is either in $F$ or has a finite base of the form $T_f \cup T'$ with $T' \subsetneq T^v$.
	
	\begin{claim}\label{claim-path}
		For every commit $(\gamma,a)$ in $\arena_{0,1,\omega, T_f}$, every successor $v$ of $(\gamma,a)$ is either in $F$, in $\BoundKT{\arena}{0}{T_f}$ or in $V$.
	\end{claim}

	\begin{claimproof}
		First of all, as $\arena_{0,1,\omega, T_f} \subseteq \arena$ and $\arena$ is an "arena", $v \in \arena$.
		
		Suppose $v \notin F$, then $T_1, \ldots, T_d$ are "secluded" in $v$.
		Let $IT$ be the set of tokens that are not in any $T_i$ and that were alone in their state in $\gamma$.  
		Let $T^v$ be the set of tokens of $IT$ whose state in $v$ only contains tokens of $IT$.
		Note that by definition of $IT$, we have $|T^v| \leq |IT| \leq |S|$.
		
		We now show that $T_f \cup T^v$ is a "finite base"  of $v$ in $\arena$.
		As $(\gamma,a) \in \Gamma_{0,1,\omega, T_f}$, we know that $T_f'= IT \cup \bigcup_{i=1}^d T_i$ is a "finite base" of $(\gamma,a)$.
		Let $S_\omega$ be the set of states containing tokens outside $T_f'$ in $w$.
		It is an "$\omega$-base" of $(\gamma,a)$.
		
		Let $S'_\omega$ be the set of states reachable from $S_\omega$ by playing $a$. 
		As $S_\omega$ is an "$\omega$-base" of $(\gamma,a)$, we can safely play $a$ from any configuration obtained from $w$ by adding tokens in states of $S_\omega$. 
		As we may then get arbitrarily many tokens in each state of $S'_\omega$, it is an "$\omega$-base" of $v$ in $\arena$.
		Every token outside $T_f'$ must end up in $S'_\omega$ in $v$. 
		Consequently, $T_f'$ is a finite base of $v$ in $\arena$.
		
		Furthermore, since we assumed $v \notin F$, tokens of $T_f$ cannot meet other tokens, hence states containing tokens of $IT$ either contain only those or are in $S'_\omega$.
		
		If $T_f$ is a "finite base" of $v$ then $v$ is in $\BoundKT{\arena}{0}{T_f}$.
		Otherwise, $T_f$ is not a "finite base" of $v$ in $\arena$ and thus $T^{v} \neq \emptyset$.
		Then, as	$ T_f \cup T^v$ is a "finite base" of $v$ in $\arena$, and all $T_i$ and $T^v$ are "secluded" in $v$, we have $v\in V$.
	\end{claimproof}
	
	Now, for all $v \in V$, we define an "arena" $Y^v$ that allows us to bring back some tokens of $T^v$ into the cohort while keeping all $T_i$ and $T^v$ "secluded".
	
	\begin{claim}
		For all $v \in V$,  there is a "sub-arena" $Y^{v}$ of $\arena$ such that:
		\begin{itemize}
			\item $v \in Y^v$
			
			\item $Y^v = \BoundKT{Y^v}{|S|}{ T_f \cup T^v}$
			
			\item a "safe random walk in $Y^v$" almost surely reaches $F^v$
			
			\item $T_1, \ldots, T_{d}, T^v$ are "secluded" in every configuration of $Y^v \setminus F^v$
		\end{itemize}
	\end{claim}
	
	\begin{claimproof}
		Observe that $F^v$ is a finite union of "ideals tracking $T_f \cup T^v$".
		As $F$ only contains configurations of which $T_f$ is a "finite base", $F \subseteq F^v$.
		Furthermore, $F^v$ is a subset of the set $F'$ of configurations with a finite base of size $< |T_f \cup T^v|$. Hence $\arena$ is also "winning@@arena" for $F'$.
		
		Since $F'$ is a finite union of "ideals", we can apply Lemma~\ref{lem:closure-pop} with $T_f = \emptyset$ to conclude that $\closure{\arena}{}$ is a "population arena" and is "winning@@arena" with respect to $F'$.
		
		By Lemma~\ref{lem:isolation}, there is a strategy to almost surely reach $F^v$ from $w$ while remaining in $\arena$ and ensuring that $T_f \cup T^v$ stays "secluded" while we have not reached  $F^v$.
		
		As we stay in $\arena$, we furthermore maintain the fact that the groups of tokens $T_1, \ldots, T_d$ are "secluded".
		As a consequence, $T_1, ,\ldots, T_d, T^v$ and the rest of the tokens occupy disjoint sets of states at all times.
		We can therefore define $R$ the set of commits reachable from $v$ while following this strategy. It is clearly an "arena".
		It is a "winning@@arena" "arena" with respect to reaching $F'$. 
		By Lemma~\ref{lem:closure-pop}, so is $\closure{R}{T_f \cup T^v}$. 
		Furthermore, since $R \subseteq \arena$ and $\arena$ is a "population arena tracking $T_f$", we have $\closure{R}{T_f \cup T^v} \subseteq \closure{\arena}{T_f \cup T^v} = \arena$.
		
		We apply the induction hypothesis to get a "winning@@arena" "sub-arena" $Y^v$ of $R$ (and of $\arena$) such that $\set{v} \subseteq Y^v$ and $Y^v = \BoundKT{Y^v}{|S|}{ T_f \cup T^v}$.	
	\end{claimproof}
	
	Define $X  = \BoundKT{\Gamma}{0}{T_f} \cup \bigcup_{v \in V} Y^v \cup \set{(\gamma, \dum) \mid \gamma \in F}$.
	Note that as $v \in Y^v$ for all $v$, $V \subseteq X$.
	The letter $\dum$ is given by Remark~\ref{rem:dummy}.
	It is used here to integrate $F$ in $X$ without conflicts in definitions (an "arena" being a set of commits and not configurations).
	
	\begin{claim}
		$X$ is a "winning@@arena" "arena" with respect to $F$.
	\end{claim}
	\begin{claimproof}
		We start by showing that $X$ is an "arena".
		Let $(x,a) \in X$. If $a = \dum$ then the only successor is $x$ itself.
		If $(x,a) \in Y^v$ for some $v \in V$, as $Y^v$ is an "arena", the successors are all in $Y^v$.
		If  $(x,a)\in \BoundKT{\Gamma}{1}{T_f}$, then by Claim~\ref{claim-path} every successor $v$ is either in $V$, or $\BoundKT{\Gamma}{0}{T_f}$ (thus also $\BoundKT{\Gamma}{1}{T_f}$), or $F$. In all three cases it is in $X$.
		
		We now show that for all $x \in X$, there is a path from $x$ to $F$ within $X$.
		Again, every configuration in $X$ is either in $F$, in $ \BoundKT{\arena}{1}{T_f}$, or in $V$.
		In the first case, we are done. In the second case, if $x \in \BoundKT{\arena}{0}{T_f}$ then
		we simply apply Lemma~\ref{lem:onlyone} to get a path from $x$ to $F$ in $\BoundKT{\arena}{1}{T_f}$, hence in $X$.
		If $x \in \BoundKT{\arena}{1}{T_f} \setminus  \BoundKT{\arena}{0}{T_f}$, then $x$ must be in $V$.
		
		So all is left to prove is that there is a path in $X$ from every configuration of $V$ to $F$.
		We show this by strong induction on $|T^v|$.
		Let $v \in V$, and $x \in Y^v$.  As $Y^v$ is "winning@@arena" with respect to $F^v$, there is a path in $Y^v$ from $y$ to some configuration $z \in F^v$.
		Recall that $F^v$ is the set of configurations of $\Gamma$ with a strict subset of $T^v \cup T_f$ as a "finite base".
		\begin{itemize}
			\item If $z \in F$ then we are done
			
			\item  If $z \notin F$ then $T_1, \ldots, T_d$ are all "secluded" in $z$. 
			\begin{itemize}
				\item  If $z \in \BoundKT{\arena}{0}{T_f}$ then by Lemma~\ref{lem:onlyone} there is a path from $z$ to $F$ in $\BoundKT{\Gamma}{1}{T_f}$, thus in $X$.
				
				\item If $z \notin \BoundKT{\arena}{0}{T_f}$ then $z$ is in $V$. As $z \in F^v$, we must have $|T^z| < |T^v|$. We can apply the induction hypothesis to get a path from $z$ to $F$ in $X$.
			\end{itemize}
			In both cases we have a path in $X$ from $x$ to $z$ and from $z$ to $F$, thus from $x$ to $F$.
		\end{itemize}
	\end{claimproof}
	
	Clearly $I \subseteq \BoundKT{\arena}{0}{T_f} \subseteq  X$.
	In order to obtain an "$|S|$-definable" "arena", we set $Y = \BoundKT{X}{|S|}{ T_f}$, i.e., we close $X$ under renaming and removing tokens outside $T_f$.
	Clearly $\BoundKT{\arena}{1}{T_f}$ and $F$ are closed under those operations. 
	For all $v$,

	as $Y^v = \BoundKT{Y^v}{|S|}{T_f \cup T^v}$, and $T^v$  has at most $|S|$ elements and is "secluded" in all of $Y^v \setminus F^v$, and all configurations of $F^v$ are in $F$ or have a "finite base" included in $T^v \cup T_f$, the set $\closure{Y^v}{T_f}$ is a union of "ideals tracking $T_f$" with largest constant $|S|$.
	Hence $Y = \BoundKT{Y}{|S|}{T_f}$. 
	
	Finally, by Lemma~\ref{lem:closure-pop}, $Y$ is a  "population arena tracking $T_f$" that is "winning@@arena" for $F$.
\end{proof}

\begin{proof}[Proof of Theorem~\ref{thm:cortosconj}]
	Since $\arena$ is the maximal "winning@@arena" "arena", by Lemma~\ref{lem:closure-pop}, we have $\arena = \closure{\arena}{}$, hence $\arena$ is a "population arena". 
	We can then straightforwardly apply Lemma~\ref{lem:recstartfine} with $d = 0$ (and $T_f =\emptyset$).
\end{proof}

\section{Missing Proofs for Section~\ref{sec:seq-flows}}
\label{app:reduction-to-SFP}
%

Let $\?M = (S, \Sigma, \Delta)$, $\vectV$, $\vectv_0$, $\vectF$ be an instance of the \problemm.
We split $\vectv_0$ in two parts $\vectv_0 = \vectv_f + \vectv_\omega$, so that $\vectv_f \in \NN^S$ and $\vectv_\omega \in \set{0, \omega}^S$. 
We start by defining an automaton $\?A$ controlling the movement of the tokens from $\vectv_f$, and translating the constraints given by $\vectV$ into "capacities" for the movement of the tokens from $\vectv_\omega$. 

Let $m  = \sum_{s \in S} \vectv_f(s)$. 
We define pairs $(\alpha, \beta)$ describing the set of allowed transfers of tokens. The movement of tokens from $\vectv_f$ is described by $\beta$, the one of other tokens by $\alpha$.
Formally, define $P$ the set of pairs $(\alpha, \beta)$ such that there exists $(\vectv, a) \in \vectV$ with:
\begin{itemize}
	\item $\alpha : S^2 \to \Nb $ and $\beta : S^2 \to \NN$ and $\sum_{s,s' \in S} \beta(s,s') = m$
	
	\item  for all $s \in S$, $\sum_{s' \in S}\alpha(s,s') + \beta(s,s') \leq \vectv(s)$ 
	
	\item for all $s,s' \in S$, if $\Delta(s,a)(s')=0$ then $\alpha(s,s') = \beta(s,s') =0$
	
	\item for all $s \in S$, if $\vectv(s) = \omega$ then for all $s' \in S$ such that $\Delta(s,a)(s')>0$, $\alpha(s,s') = \omega$
\end{itemize}
The first condition says that $\beta$ should describe the movement of $k$ tokens. The second and third one make sure that the transfers of tokens described by $\alpha$ and $\beta$ are allowed by $\vectV$.
The fourth one is there to ensure that $P$ is finite: if we have a commit letting us transfer arbitrarily many tokens from $s$ to $s'$, we want a single "capacity" with an $\omega$ label between $s$ and $s'$, and not one for each number $n \in \NN$.

\newcommand{\vectf}{\vect{f}}

Similarly, we define $P_\vectF$ the set of pairs $(\alpha, \beta)$ such that there exists $\vectf \in \vectF$ with:
\begin{itemize}	
	\item $\alpha : S^2 \to \Nb $ and $\beta : S^2 \to \NN$ and $\sum_{s \in S} \beta(s,s) = m$
	
	\item  for all $s \in S$, $\alpha(s,s) + \beta(s,s) \leq \vectf(s)$ 
	
	\item For all $s \neq s'$, $\alpha(s,s') = \beta(s,s') = 0$
	
	\item for all $s \in S$, if $\vectf(s) = \omega$ then $\alpha(s,s) = \omega$
\end{itemize}
 
 Those pairs do not let tokens change states: they are here to check, at the end, that the repartition of tokens is within $\vectF$.

We define the automaton $\mathcal{B}$ whose set of states is the set of vectors \[Q =  \set{\vectv \in\NN^S \mid  \sum_{s \in S}\vectv(s) =m} \cup \set{q_{\mathbf{end}}}.\]
The initial state of $\?B$ is $\vectv_f$ and the final one is $q_{\mathbf{end}}$. Transitions are of two types:
\begin{itemize}
	\item We have a transition $(\vectv, (\alpha,\beta), \vectv')$ whenever $(\alpha,\beta) \in P$ and $\vectv(s) = \sum_{s' \in S} \beta(s,s')$ and $\vectv'(s') = \sum_{s \in S} \beta(s,s')$ for all $s,s' \in S$.
	
	\item We have a transition $(\vectv, (\alpha,\beta), q_{\mathbf{end}})$ whenever $(\alpha,\beta) \in P_{\vectF}$ and $\vectv(s) = \beta(s,s)$ for all $s \in S$.
\end{itemize}

The last transition is there to check that we have reached a configuration of $\vectF$.

Then, we define $A = \set{\alpha \mid \exists \beta, (\alpha, \beta) \in P}$. This will be our alphabet of "capacities".
Let $\mathcal{A}$ be the automaton obtained by projecting $\mathcal{B}$ on $A$: every transition $(\vectv, (\alpha,\beta), \vectv')$ is replaced by 
$(\vectv, \alpha, \vectv')$ and $(\vectv, (\alpha,\beta), q_{\mathbf{end}})$ is replaced by 
$(\vectv, \alpha, q_{\mathbf{end}})$, the rest remains the same.

Let $K$ be the largest number appearing in the input.
Observe that the number of states is bounded by $(K|S|+1)^{|S|}$ (since $m \leq K|S|$) and the size of the alphabet by $(K+2)^{|S|}$ (since the defined "capacities" use values in $\set{0,\dots, K, \omega}$).
Hence the size of the resulting automaton is singly exponential in the number of states and the binary representation of  $K$

The following lemma formally relates "token flows" constrained by $L(\?A)$ and "paths" constrained by $\vectV$.
Roughly speaking, the former is the projection of the latter on the tokens which come from unbounded states in $\vectv_0$.
The automaton $\?A$ takes care of simulating the rest of the tokens, and recognizes the sequences of "capacities" that let us reach $\vectF$ from $\vectv_0$  while staying in $\vectV$ and carrying those remaining tokens.

\begin{lemma}
	\label{lem:flow-to-path}
	Let $N \in \NN$ and $\gamma^N$ a configuration such that on each state $s$, there are $N$ tokens if $\vectv_\omega(s)=\omega$ and $0$ token otherwise.  
	Let $\gamma^{N,f}$ a configuration obtained from $\gamma^{N}$ by adding $\vectv_f(s)$ tokens on each $s$ such that $\vectv_\omega(s)=0$.
	The following are equivalent:
	\begin{itemize}
		\item There is a "token flow" from $\gamma^N$ over a word of $L(\?A)$
		
		\item There is a "path" from $\gamma^{N,f}$ to a configuration of $\ideal{\vectF}$ in $\ideal{\vectV}$.
	\end{itemize}
\end{lemma}

\begin{proof}
		Let $T_\omega$ the set of tokens used in $\gamma^N$ and $T_f$ the set of additional tokens in $\gamma^{N,f}$.
	
		We start with the left-to-right direction.
		Suppose we have a "token flow" $\tau = \gamma_0 \to \cdots  \to \gamma_k$ over a word $w = \alpha_1 \dots \alpha_k$ in $L(\?A)$, with $\gamma_0 = \gamma^N$.
		By construction of $\?A$ there exist $\beta_1, \ldots, \beta_k$ such that $(\alpha_1, \beta_1) \dots (\alpha_k, \beta_k) \in P^* P_\vectF$ is accepted by the automaton $\?B$.
		Let $\vectv_0, \dots, \vectv_{k-1}, q_{\mathbf{end}}$ be the sequence of states visited by an accepting run of $\?B$ on this word. 
		
		We construct a path $\pi = \gamma^f_0 \xrightarrow{a_1} \dots \xrightarrow{a_{k-1}} \gamma^f_{k-1}$  in $\ideal{\vectV}$ from $\gamma_0^f = \gamma^{N,f}$ by making tokens of $\gamma^N$ take the same transitions at every step, and moving the other tokens according to $\beta_i$ at the $i$th step.
		
		Formally, we proceed by induction on $i\in \set{0, \dots, k-1}$ to define a "path" $\gamma^f_0 \xrightarrow{a_1} \dots \xrightarrow{a_{i}} \gamma^f_{i}$ so that:
		\begin{itemize}
			\item $\gamma^f_0 = \gamma^{N,f}$
			
			\item for all $s \in S$, there are $\vectv_i(s)$ tokens of $T_f$ in $s$ in $\gamma^f_i$
			
			\item The projection of $\gamma^f_i$ on $T_\omega$ is $\gamma_i$
			
			\item for all $j<i$, $(\gamma^f_j,a_{j+1}) \in \ideal{\vectV}$. 
		\end{itemize}
	
		The base case $i=0$ is immediate from the definition of $ \gamma^{N,f}$. 
		Let $i>0$, suppose we constructed $\gamma^f_0 \xrightarrow{a_1} \dots \xrightarrow{a_{i-1}} \gamma^f_{i-1}$. 
		\begin{itemize}
			\item The first item is immediate
			
			\item Since there is a transition reading $(\alpha_{i}, \beta_i)$ from $\vectv_{i-1}$ to $\vectv_i$ in $\?B$, for each $s \in S$, we have $\sum_{s' \in S} \beta(s,s') = \vectv_{i-1}(s)$ and for all $s' \in S$, $\sum_{s \in S} \beta(s,s') = \vectv_{i}(s')$.
			By the second item of the induction hypothesis, we can thus transfer $\beta(s,s')$ tokens of $T_f$ from $s$ to $s'$ for each pair $s,s'$, after which the number of tokens of $T_f$ in each $s'$ is $\vectv_i(s')$.
			
			\item We move tokens of $T_\omega$ as in the step $\gamma_{i-1} \to \gamma_{i}$ in $\tau$, hence the projection of the resulting configuration on $T_\omega$ is $\gamma_i$. Also, this means that the number of tokens of $T_\omega$ going from $s$ to $s'$ is at most $\alpha_i(s,s')$
			
			\item 	Since $(\alpha_i, \beta_i)\in P$, there exists $(\vectv, a) \in \vectV$ such that, for all $s, s' \in S$, we have $\sum_{s' \in S} \alpha_i(s,s') + \beta_i(s,s') \leq \vectv(s)$ and if $\Delta(s,a)(s')=0$ then $\alpha_i(s,s') = \beta_i(s,s')=0$. 
			We thus have, for each $s \in S$, \[|(\gamma_{i-1})^{-1}(s)| = |(\gamma_{i-1})^{-1}(s) \cap T_f| + |(\gamma_{i-1})^{-1}(s)\cap T_\omega| \leq \sum_{s' \in S} \alpha_i(s,s') + \sum_{s' \in S} \beta_i(s,s') \leq \vectv(s).\]
			Also, for all $s,s'$, if $\Delta(s,a)(s')=0$ then 
			\begin{align*}
				&|(\gamma_{i-1})^{-1}(s) \cap (\gamma_i)^{-1}(s)|\\
				= & |(\gamma_{i-1})^{-1}(s) \cap (\gamma_i)^{-1}(s') \cap T_f| + |(\gamma_{i-1})^{-1}(s) \cap (\gamma_i)^{-1}(s') \cap T_\omega| \\
				\leq & \alpha_i(s,s') + \beta_i(s,s') = 0.
			\end{align*}
			As a result, by setting $a_i =a$ we get  $(\gamma^f_{i-1},a_{i}) \in \ideal{(\vectv,a)} \subseteq \ideal{\vectV}$
		\end{itemize}

		In the end, we obtain a "path" $\pi = \gamma^f_0 \xrightarrow{a_1} \dots \xrightarrow{a_{k-1}} \gamma^f_{k-1}$  in $\ideal{\vectV}$ as wanted.
		It remains to show that $\gamma^f_{k-1}$ is in $\ideal{\vectF}$.
		There is a transition from $\vectv_{k-1}$ to $q_{\mathbf{end}}$ labeled $(\alpha_k, \beta_k)$, hence there exists $\vectf \in \vectF$ such that the conditions in the definition of $P_\vectF$ are satisfied, and $\vectv_{k-1}(s) = \beta_k(s,s)$ for all $s$.  
		We also have, since $\gamma_{k-1} \to \gamma_k$ is a "token flow" over $(\alpha_k, \beta_k) \in P_\vectF$, $|(\gamma_{k})^{-1}(s)| = |(\gamma_{k-1})^{-1}(s)| \leq \alpha_k(s,s)$ for all $s$.
		As a result, for all $s$ we have 
		\begin{align*}
			& |(\gamma^f_{k-1})^{-1}(s)|\\
			= & |(\gamma^f_{k-1})^{-1}(s) \cap T_f| + |(\gamma^f_{k-1})^{-1}(s) \cap T_\omega|\\
			= & \vectv_{k-1}(s) + |(\gamma_{k-1})^{-1}(s)|\\
			\leq & \beta_k(s,s)+ \alpha_k(s,s) \\
			\leq & \vectf(s).
		\end{align*}
	This concludes the proof of the left-to-right direction.
		
		We proceed with the right-to-left direction.
		
		Suppose we have a "path" $\pi = \gamma^f_0 \xrightarrow{a_1} \dots \xrightarrow{a_{k-1}} \gamma^f_{k-1}$ from $\gamma^{N,f}$ to $\ideal{\vectF}$ in $\ideal{\vectV}$.
		Let $\vectf \in \vectF$ be such that $\gamma^f_{k-1} \in \ideal{\vectf}$.
		
		For all $i \leq k-1$ we define the following objects. 
		Let $\gamma_i$ be the projection of $\gamma_i^f$ on $T_\omega$. Let $\vectv_i$ be the vector counting the number of tokens of $T_f$ in each state in $\gamma_i^f$.
		Since $\pi$ is a path in $\ideal{\vectV}$, there exists $(\vectv, a) \in \vectV$ such that $|(\gamma^f_i)^{-1}(s)|\leq \vectv(s)$ for all $s$ and for all $s,s'$, $\Delta(s,a)(s') = 0$ implies $|(\gamma^f_{i})^{-1}(s) \cap (\gamma^{f}_{i+1})^{-1}(s')| =0$. Let $\beta_i$ be the vector such that $\beta_i(s,s') = |(\gamma^f_{i})^{-1}(s) \cap (\gamma^{f}_{i+1})^{-1}(s') \cap T_f|$ for all $s,s'$.
		Let $\alpha_i$ be the vector such that:
		\begin{itemize}
			\item if $\Delta(s,a)(s')=0$ then $\alpha_{i}(s,s') = 0$,
			
			\item otherwise if  $\vectv(s) =\omega$ then $\alpha_{i}(s,s') =\omega$,
			
			\item otherwise $\alpha_i(s,s')= |(\gamma^f_{i})^{-1}(s) \cap (\gamma^{f}_{i+1})^{-1}(s) \cap T_\omega|$.
		\end{itemize}
		It is easy to check that $(\alpha_i, \beta_i) \in P$, that it labels a transition from $\vectv_{i-1}$ to $\vectv_i$ in $\?B$, and that $|(\gamma_{i-1})^{-1}(s) \cap (\gamma_{i})^{-1}(s')| \leq \alpha(s,s')$.
		
		Similarly, we construct $(\alpha_k, \beta_k)$ by setting $\alpha_k(s,s) = \omega$ if $\vectf(s)=\omega$ and  $|(\gamma^f_{k-1})^{-1}(s) \cap  T_\omega|$ otherwise, and $\beta_k(s,s) = |(\gamma^f_{k-1})^{-1}(s) \cap  T_f|$.
		It is easy to check that $(\alpha_k, \beta_k)$ is in $P_\vectF$ and labels a transition from $\vectv_{k-1}$ to $q_{\mathbf{end}}$ in $\?B$.
		Therefore, $(\alpha_1,\beta_1) \dots (\alpha_k,\beta_k) \in L(\?B)$, hence $\alpha_1 \dots \alpha_k \in L(\?A)$.

		Let $\tau = \gamma_0 \to \dots \to \gamma_{k-1} \to \gamma_k$, with $\gamma_k = \gamma_{k-1}$. By the properties shown above, this is a "token flow" over the word $\alpha_1 \dots \alpha_k$.
This concludes the proof.
\end{proof}

With this equivalence in hand, we can prove the reduction from the \problemm{} to the computation of $\multiflow$.

\ReductionToSFP*

\begin{proof}
	For all $N$ let $\gamma^N$ and $\gamma^{N,f}$ be as in Lemma~\ref{lem:flow-to-path}.
	Since every configuration of $\vectv_0$ can be obtained from $\gamma^{N,f}$ for some $N$ by removing and renaming tokens, $\?M$, $\vectV$, $\vectv_0$, $\vectF$ is a positive instance of the \problemm{} if and only if for all $N$ there is a path from $\gamma^{N,f}$ to $\ideal{\vectF}$ within $\ideal{\vectV}$.
	By Lemma~\ref{lem:flow-to-path}, this is the case if and only if for all $N$ there is a "token flow" over a word of $L(\?A)$ from $\gamma^{N}$.
	We now show that the latter property is true if and only if the two items of the lemma hold.
	
	Suppose for all $N$ there is a "token flow" $\tau_N$ over a word of $L(\?A)$ from $\gamma^{N}$.
	For all $N$, let $\gamma_{\mathbf{end}}^N$ be the last configuration in $\tau$, and $E_N$ the set $\set{(s,s') \mid \globalflow{\tau_N}(s,s') \geq \frac{N}{|S|}}$.
	Observe that for all $s$ such that $\vectv_\omega(s) = \omega$, since $s$ contains $N$ tokens initially in $\gamma^N$, there must exist $s'$ such that $\globalflow{\tau_N}(s,s') \geq \frac{N}{|S|}$.
	Thus, $\vectv_0^{-1}(\omega)\subseteq E_N$ for all $N$.
	Since there are finitely many possibilities for $E_N$, we can select an infinite subsequence of $(\tau_N)$ which all agree on $E_N$. 
	Formally, we select a set $E$ and an infinite sequence $n_1 < n_2 < \cdots$ of indices such that $E_{n_j} = E$ for all $j$.
	For all $j$ we have $\globalflow{\tau_{n_j}}(s,s') \geq \frac{n_j}{|S|}$. Since $(n_j)_{j \geq 1}$ tends to infinity as $j$ grows, we have $\multiflow(E, L(\?A)) = +\infty$.
	
	Now suppose we have $\multiflow(E, L(\?A)) = +\infty$ for some $E$ such that  $\vectv_0^{-1}(\omega)\subseteq E$.
	Then for all $N$ there exists a "token flow" $\tau_N$ over a word $w_N$ of $L(\?A)$ such that $\globalflow{\tau_N}(s,s') \geq N$ for all $(s,s') \in E$.
	In particular, since $\vectv_0^{-1}(\omega)\subseteq E$, for all $s$ such that $\vectv_0(s)=\omega$ we must have at least $N$ tokens in $s$ at the start of $\tau_N$.
	Hence, by removing and renaming tokens, we obtain a "token flow" from $\gamma^N$ over $w_N$, concluding the proof. 
\end{proof}

\section{The EXPTIME Lower Bound}\label{sec:app-lower-bound}


\ThmExpHard*

A \reintro{Countdown Game}
is given by a directed graph $\?G=(V,E)$,
where edges carry positive integer weights, 
$E\subseteq (V\x\+N_{>0}\x V)$. 
Given an initial pair $(v,c_0)\in V\x\+N$ consisting of a vertex and a number, two opposing players (Players 1 and 2) alternately determine a sequence of such pairs as follows.
In each round, from $(v,c)$,
Player~1 picks a number $d\le c$ such that $E$ contains at least one edge $(v,d,v')$;
then Player~2 picks one such edge and the game continues from $(v',c-d)$.
Player~1 wins the game if and only if the play reaches a pair in $V\x\{0\}$.

\textsc{CountdownGame} is the decision problem{} which asks if Player~1 has a strategy to win a given game for a given initial pair $(v_0,c_0)$. 
All constants in the input are represented in binary.

\color{black}

\begin{proposition}[Thm.~4.5 in \cite{JLS2007}]
    \label{lemma}
    \textsc{CountdownGame} is \exptime-complete.
\end{proposition}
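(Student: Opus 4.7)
The plan is to prove the two matching bounds separately. For the \exptime\ upper bound, even though the initial counter $c_0$ is written in binary, the explicit game arena consists of configurations $(v,c)\in V\times\{0,\ldots,c_0\}$ and so has at most $|V|(c_0+1)$ positions, which is exponential in the input size. On this explicit arena the Countdown Game is a standard two-player reachability game with target $V\times\{0\}$, solvable by backward attractor computation in time polynomial in the arena size. Concretely, Player~1's winning region is the least fixed point of the operator that adds a pair $(v,c)$ whenever there exists $d\le c$ such that for every edge $(v,d,v')\in E$ the pair $(v',c-d)$ is already in the set, seeded with $V\times\{0\}$. The iteration converges in at most $|V|(c_0+1)$ rounds, giving an \exptime\ algorithm.

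For the \exptime\ lower bound I would reduce from the acceptance problem of alternating Turing machines working in polynomial space, which is \exptime-complete. Given such a machine $M$ with space bound $p(n)$ on input $w$, every configuration of $M$ is a string of length $\ell=O(p(n))$ over a fixed alphabet and so encodes into an integer with $O(\ell)$ bits. I would design the Countdown Game so that the counter value, through a fixed invertible affine transformation, records the current configuration of $M$. The vertex set keeps the control state of $M$ together with a small phase component tracking which subpart of a transition is currently being simulated. Existential states of $M$ become Player~1 vertices, universal states become Player~2 vertices, and edges carry binary-encoded (hence polynomial-size) weights which subtract from the counter exactly the amount corresponding to the effect of the chosen transition on the encoded configuration; the accepting configurations of $M$ are arranged to match counter value $0$.

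The main obstacle will be ensuring faithfulness of the simulation: a Turing machine transition modifies only a constant-size window of the tape, whereas the counter encoding is global, so the simulator should not be free to pick an arbitrary decrement. The standard fix is to let the opposing player choose the tape position (or bit index) that must be read or rewritten; since the edge weight is then determined by the local transition rule and the chosen position, the simulator cannot cheat on the numerical effect. Combined with small gadgets that make any encoding inconsistency unrecoverable, i.e.\ blocking every path to $0$, this ensures a one-to-one correspondence between winning plays in the Countdown Game and accepting computation trees of $M$. All gadgets, weights, and bit-lengths remain polynomial in $|M|$, $|w|$ and $p(n)$, so the reduction is polynomial-time, which completes the \exptime-hardness argument and hence the proposition.
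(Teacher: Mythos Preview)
The paper does not prove this proposition; it is quoted verbatim as Theorem~4.5 of \cite{JLS2007} and used as a black box in the hardness reduction that follows. So there is no paper-side argument to compare against.

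Your upper bound is fine: the explicit arena has at most $|V|(c_0+1)$ positions, and the backward attractor solves the resulting two-player reachability game in time polynomial in that size, hence exponential in the binary input.

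Your lower bound, however, has a genuine gap. In a Countdown Game all edge weights are strictly positive, so the counter is \emph{strictly decreasing} along every play. Your plan is to let the counter encode the current Turing-machine configuration and to realise each transition as a decrement equal to the numerical difference between consecutive encodings. But that difference has no fixed sign: rewriting the cell at position $i$ from symbol $a$ to symbol $b$ changes the encoding by $(b-a)\cdot B^{i}$ for your base $B$, which may be negative. Nothing in your sketch forces the encoded configuration to decrease at every step, and without that the reduction does not fit the model at all. The standard repair is to fold in a dominating clock component, encoding a pair $(t,\gamma)$ as $t\cdot K+\mathrm{enc}(\gamma)$ with $K$ exceeding any one-step change of $\mathrm{enc}(\gamma)$, so that decrementing $t$ guarantees a net positive weight; but you never mention this, and ``accepting configurations match counter value $0$'' is then no longer automatic and needs its own gadget. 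Your verification idea (``let the opposing player choose the bit index'') is also only a slogan: you must exhibit a concrete sub-game in which a wrong decrement leaves Player~1 in a position from which Player~2 can force the counter away from $0$, and this is where the real work of the reduction lies.
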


\label{sec:lower-bound-gadgets}
To reduce \textsc{CountdownGame} to the \problem{}, we first observe that the number of turns in a "Countdown Game" cannot exceed the initial value $c_0$ of the counter, as the counter decreases at each turn. Thus, if Player~2 has a winning strategy, choosing actions at random yields a positive probability of following that strategy, and therefore a positive probability of winning. Consequently, Player~1 wins the initial game if and only if she wins with probability one against a randomised adversary.

The main idea for our construction is to require \Laetitia to move tokens one by one 
away from a waiting state, first into 
the control graph of the "Countdown Game", and ultimately into the goal.
To avoid a loss in the intermediate phase, she must win an instance of that game against a randomizing opponent.
This is enforced using a combination of gadgets, including two
binary counters that 
can effectively test for zero, be set to specific numbers,
and that are set up so that they can decrement at the same rate.
%
%
As a result, Player~1 has a winning strategy for the two-player "Countdown Game" if, and only if, \Laetitia
can synchronize the $n$-fold product of the constructed MDP
for all $n$.


%
%





\medskip
For a given "Countdown Game" $\?G$ with an initial pair~$(v_0,c_0)$ we construct an MDP $\?M$ as follows.
We write that action $a$ ""takes state $s$ to successor $t$"" to mean that $\delta(s,a)(t)>0$.

A state $s$ is ""marked"" in a configuration $\gamma$ if at least one token occupies it: $\exists t. \gamma(t)>0$.
Whenever action $a$ takes state $s$ only back to itself we say that $s$ ""ignores"" $a$.
There are states $\HEAVEN$ (the target) and $\HELL$ which ignore all actions.
For a given state $s$, an action $a$ is ""angelic"" if it takes $s$ only to $\HEAVEN$,
and ""daemonic"" if it takes $s$ to $\HELL$.
An action $a$ is ""safe"" in a configuration if it is not "daemonic" for any "marked" state (in any gadget).

\begin{figure}[t]
\begin{subfigure}[T]{0.4\textwidth}
\begin{tikzpicture}[
  node distance=0.75cm and 1.25cm,
  ]
\node[estate] (I)                    {\texttt{wait}};
\node[estate]         (R)  [right= of I]      {\texttt{ready}};

\path  (I.north east) edge node[anode,auto] {$\texttt{wait}$}(R.north west);
\path  (R.south west) edge node[anode,auto] {$\texttt{wait}$}(I.south east);

\path  (I) edge[loop left] node[anode]{$\texttt{wait}$} (I);
\end{tikzpicture}
\end{subfigure}
\begin{subfigure}[T]{0.6\textwidth}
\hfill
\begin{tikzpicture}[
  node distance=0.75cm and 1.5cm,
  ]

\node[estate] (1)  {$W$};
\node[estate,right= of 1] (2)  {$G$};
\node[estate,right= of 2] (3)  {$A$};
\node[estate,right= of 3] (4)  {$B$};

\path (1) edge[loop left] node[anode,auto] {$\texttt{wait}$}(1);
\path (1.north east) edge node[anode,auto] {$\texttt{go}$} (2.north west);
\path (2.north east) edge node[anode,auto] {$\Sigma_{\?G}$} (3.north west);
\path (3.south west) edge node[anode,auto] {$\texttt{next}$} (2.south east);
\path (4.south west) edge node[anode,below] {$\Sigma_{AC}$}(3.south east);
\path (3.north east) edge node[anode,above] {$\Sigma_{MC}$}(4.north west);
\path (2.south west) edge node[anode,auto] {$\texttt{win}$} (1.south east);

\end{tikzpicture}
\end{subfigure}
\caption{The waiting (on left) and
  the control gadgets (on right).
Edges labelled by $\Sigma_X$ are shorthand
    for several edges, one for each action in $\Sigma_X$.
    All but the depicted actions are daemonic.
}
\label{fig:control}
\label{fig:waiting-room}
\end{figure}
Besides the special states $\HEAVEN$ and $\HELL$, $\?M$ contains several gadgets described below.
\vspace{-1em}
\subparagraph*{Waiting.}
\newcommand{\WaitAct}{\texttt{wait}}
\newcommand{\WaitState}{\texttt{Wait}}
\newcommand{\ReadyState}{\texttt{Ready}}

The waiting gadget
has two states $\WaitState$ and $\ReadyState$
which react to the action $\texttt{wait}$
as depicted in Figure~\ref{fig:waiting-room} (left).
%
%
Whenever a configuration marks one of these states,
a strategy that continuously plays $\WaitAct$ will almost surely reach a configuration
in which exactly one component marks $\ReadyState$.


A special action $\texttt{go}$ (to indicate successful isolation of one component) takes
$\ReadyState$ to the initial state $v_0$ of the game $\?G$.
All other actions (in gadgets described below) are ignored.
This is similar to what happens in Example~\ref{ex:force-one}.


\subparagraph*{Game.}
The game $\?G=(G,E)$ is directly interpreted as MDP: 
For every edge $(s,d,s')\in E$ there is an action $(s,d)$ 
which takes $s$ to $s'$ 
and which is daemonic for all states $s'\neq s$.

The action $\texttt{win}$ is angelic for every state of $G$.  All other actions are ignored.
%
%


\subparagraph*{Binary Counters.}
A ($k$-bit) Counter consists of states $\BIT{i}{j}$ for all $0\le i< k$ and $j\in\{0,1\}$.
For every bit $i$ there is a decrement action $\dec{i}$ which
\begin{itemize}
    \item takes $\BIT{j}{0}$ only to $\BIT{j}{1}$ for all $0\le j < i$,
    \item takes $\BIT{i}{1}$ only to $\BIT{i}{0}$,
    \item is daemonic for $\BIT{i}{0}$, and
    \item is ignored by all $\BIT{j}{l}$, for all $i<j$ and $l\in\{0,1\}$.
\end{itemize}

We say that a configuration \emph{holds} the number 
$c<2^k$ in this counter if 
it marks those states that represent the binary expansion of $c$:
for all~$0\le i\le k-1$,
state $\BIT{i}{j}$ is marked iff
the $i$th bit in the binary expansion of $c$ is~$j$. 
An action~$a$ \emph{sets} the counter to number $d$
if for all $0\leq i<k$, 
it takes $\BIT{i}{0}$ to only $\BIT{i}{j}$ where $j\in \{0,1\}$ is the $i$th bit in the binary expansion of~$d$,
and is daemonic for all $\BIT{i}{1}$ (to ensure that the counter can only be set if it holds~$0$).
%
%

Additionally, for every bit $i$ the gadget has an error action $\error{i}$,
which is daemonic for $\BIT{i}{0}$ and $\BIT{i}{1}$, and 
angelic for every other state (of $\?M$).
These actions can be used to quickly synchronise any configuration in which the counter is not correctly initialised,
i.e., does not hold a number.
See Figure~\ref{fig:counter} for a depiction of a $4$-bit counter.

\medskip
The MDP $\?M$ will contain two distinct counter gadgets. 
A main counter $MC$ 
has 
$\log_2(n_0)$ bits to hold possible counter values of the "Countdown Game".
An auxiliary counter $AC$ has $\log_2(d_{\texttt{max}})$ many bits to hold the largest edge weight~$d_{\texttt{max}}$ in $\?G$.
These have distinct sets of states and actions,
so for clarity, 
we write $\DIV{C}{x}$ to refer to state (or action) $x$ in gadget $C$.
We connect some new actions to these two counters as follows.
%
%
%
%
\begin{itemize}
    \item The action $\texttt{go}$ sets $MC$ to~$n_0$;
this ensures that $MC$ holds $n_0$ when starting to simulate~$\?G$.
\item The action $\texttt{win}$ is daemonic for every state $\DIV{MC}{\BIT{i}{1}}$.
 This enforces that the $MC$ must hold $0$ when a strategy claims Player~1 wins $\?G$.
\item Any action~$(v,d)\in \Sigma_{\?G}$ sets $AC$ to $d$;
\item The action $\texttt{next}$ is daemonic for every state $\DIV{AC}{\BIT{i}{1}}$.
This enforces that a strategy must first count down from~$d$ to~$0$ before it can simulate the next move in~$\?G$.
\end{itemize}
\begin{figure}[t]
    \centering
    \begin{tikzpicture}[
  node distance=1cm and 2.75cm,
  ]

\node[estate] at (0,0) (Z0)  {$\Bit{0}{0}$};
\node[left= of Z0, estate] (Z1)  {$\Bit{1}{0}$};
\node[left= of Z1, estate] (Z2)  {$\Bit{2}{0}$};
\node[left= of Z2, estate] (Z3)  {$\Bit{3}{0}$};

\node[above=  of Z0, estate] (O0)  {$\Bit{0}{1}$};
\node[left= of O0, estate] (O1)  {$\Bit{1}{1}$};
\node[left= of O1, estate] (O2)  {$\Bit{2}{1}$};
\node[left= of O2, estate] (O3)  {$\Bit{3}{1}$};

\path  (Z0.north west) edge node[anode,align=left,auto] {$\Dec{1}$,\\$\Dec{2}$,\\$\Dec{3}$ }(O0.south west);
\path  (O0.south east) edge node[anode,right] {$\Dec{0}$}         (Z0.north east);
\path  (Z1.north west) edge node[anode,align=left,left] {$\Dec{2}$,\\$\Dec{3}$}(O1.south west);
\path  (O1.south east) edge node[anode,auto] {$\Dec{1}$}         (Z1.north east);
\path  (Z1) edge[loop left] node[anode,auto] {$\Dec{0}$}(Z1);
\path  (O1) edge[loop left] node[anode,auto] {$\Dec{0}$}(O1);

\path  (O2.south east) edge node[anode,auto] {$\Dec{2}$}         (Z2.north east);
\path  (Z2) edge[loop left] node[anode,auto,align=left] {$\Dec{0}$,\\$\Dec{1}$}(Z2);
\path  (O2) edge[loop left] node[anode,auto,align=left] {$\Dec{0}$,\\$\Dec{1}$}(O2);
\path  (Z2.north west) edge node[anode,align=left,left] {$\Dec{3}$ }(O2.south west);

\path  (O3.south east) edge node[anode,auto] {$\Dec{3}$}         (Z3.north east);
\path  (Z3) edge[loop left] node[anode,auto,align=left] {$\Dec{0}$,\\$\Dec{1}$,\\$\Dec{2}$}(Z3);
\path  (O3) edge[loop left] node[anode,auto,align=left] {$\Dec{0}$,\\$\Dec{1}$,\\$\Dec{2}$}(O3);
\end{tikzpicture}
\caption{A (4-bit) Binary Counter.
    Not displayed are edges labelled by $\dec{i}$ that make the respective
    actions daemonic for state $\BIT{i}{0}$, and error actions $\error{i}$,
    which are daemonic for $\BIT{i}{0}$ and $\BIT{i}{1}$, for all bits
    $i\in\{0,1,2,3\}$.
}
\label{fig:counter}
\end{figure}
\subparagraph*{Control.}
The control gadget
will enforce that a synchronising strategy proposes actions in a proper order; see Figure~\ref{fig:control}.
It consists of states $W,G,A,B$, and contains actions of all gadgets above (including $\texttt{go}$,  $\texttt{win}$, $\texttt{next})$
and a new $\error{}$ action, which is angelic for all states except~$W$, for which it is daemonic.
All omitted edges in~Figure~\ref{fig:control} are daemonic.

\subparagraph*{Start/End.}
%
To complete the construction of~$\?M$, we introduce 
an initial state $\StartState$ and
actions~$\texttt{start}$ and $\texttt{end}$.
The action $\texttt{start}$ 
takes $\StartState$ to $\WaitState$ (Waiting gadget), $W$    
(Control gadget), and all $\BIT{i}{0}$ states of counters $AC$ and $MC$.
It is daemonic for every other state.

The  action $\texttt{end}$ is daemonic for $\WaitState$ and $\ReadyState$,
and angelic for every other state in~$\?M$.

\begin{theorem}
    \label{theorem}
    $\?M, \mathtt{Init}, \mathtt{Heaven}$ is a positive instance of the \problem{} iff Player~1 wins $\?G$.
\end{theorem}
\begin{proof}
Suppose Player~1 wins the game~$\?G$.
Fix $n$ and a set of $n$ tokens $T$. Recall that in $\?M^{(T)}$  all components of the initial configuration  mark~$\StartState$. A synchronising strategy proceeds as follows:
\begin{itemize}
        \item Play $\texttt{start}$ to initialize the Waiting and Control gadgets, and to set~$AC$ and $MC$ to~$0$.
        If any of the gadgets is not correctly initialised afterwards, play the respective error action to win directly.
           For instance, if $W$ is unmarked, play $\error{}$ to synchronise.
        \item 
            Reduce the number of components marking $\WaitState$ one by one 
            until a configuration is reached in which $\WaitState$ is not marked.
            Once this is true, play~$\texttt{end}$ to synchronise.
        \item To reduce the number of components marking $\WaitState$,
            isolate one of them, and move it to $\HEAVEN$ by simulating the "Countdown Game":
            \begin{enumerate}
            \item Play $\WaitAct$ until only a single component marks $\ReadyState$, then play $\texttt{go}$.
                This will mark~$v_0$ in the game gadget and sets~$MC$ to $n_0$. Recall that $(v_0,n_0)$ is the initial pair of~$\?G$.
            \item Simulate rounds of the game~$\?G$:
           assume state~$v$ in the game gadget is marked and
           the counter~$MC$ holds~$c$, then let $d$ be
           the number Player~$1$ plays to win
           from the pair~$(v,c)$ in~$\?G$. Play~$(v,d)$.
           This action will  set  $AC$ to $d$.
           Alternate between (safe) decrement actions in $AC$ and $AB$ until they hold $0$ and $c-d$, respectively.
           Play $\texttt{next}$.
       \item The above simulation of rounds in~$\?G$ is repeated until both $AC$ and $AB$ hold $0$, by assumption that Player~1 wins~$\?G$, this is possible. At this point it is safe to play $\texttt{win}$.
            \end{enumerate}
    \end{itemize}

    
   Conversely, assume that Player~1 cannot win~$\?G$.
    Suppose that after the (only possible) initial move $\texttt{start}$, all gadgets are correctly initialized.
    Clearly, for every $n$, this event has strictly positive probability. We argue that no strategy can synchronise
    such a configuration. Indeed, a successful strategy had to play a sequence in
    $\WaitAct^*\cdot \texttt{go}$ first, 
    followed by actions in $(\Sigma_{\?G}\cdot(\Sigma_{AC}\cdot\Sigma_{MC}\cdot\texttt{next})^*)^*$,
    by construction of the control gadget.
    If after playing $\texttt{go}$, more than one component mark~$v_0$, there is a non-zero chance that
    these will diverge, making subsequent actions in $\Sigma_{\?G}$ unsafe.
    If exactly one component marks~$v_0$ then the second sequence of actions (assuming all actions are safe) corresponds to a  play of~$\?G$.
    This inevitably leads to a configuration in which
    counter $MC$ holds $0$ and the control enforces that the next action is in $\Sigma_{MC}$.
    But any such action will be daemonic for some state in $MC$ and thus not be safe.
    We conclude that every strategy will lead to a configuration in which at least one component marks $\HELL$
    and thus cannot be synchronised.    
\end{proof}

Theorem~\ref{thm:exp-hard} follows immediately from \cref{lemma,theorem}.

\section{Hard Cases}
Throughout this section, we construct an automaton relying on the 
the language introduced in \cref{sec:lower-bound-gadgets}.
Specifically,
a state $s$ is \emph{marked} in a configuration $w$ if at least one token occupies it: $\exists t. w(t)>0$.
Whenever an action $a$ takes state $s$ only back to itself we say that $s$ \emph{ignores} $a$.
There are states $\HEAVEN$ (the target) and $\HELL$ which ignore all actions.
For a given state $s$, an action $a$ is \emph{angelic} if it takes $s$ only to $\HEAVEN$,
and \emph{daemonic} if it takes $s$ to $\HELL$.
An action $a$ is \emph{safe} in a configuration if it is not daemonic for any marked state (in any gadget).

\subsection{A Butterfly}%
\label{sec:butterfly}
\begin{figure}[ht]
    \centering
\tikzset{every path/.append style={
  shorten <=2pt,
  shorten >=2pt,
  -{Triangle}, 
}}
\tikzset{rstate/.style={estate,
          fill=col1!10,
          draw=col1,
}}
\tikzset{lstate/.style={estate,
          fill=col2!10,
          draw=col2,
}}
\tikzset{lcol/.style={col2,
}}
\tikzset{rcol/.style={col1,
}}

\newcommand{\ACT}[2]{#1_{#2}}

\begin{tikzpicture}[
  node distance=1cm and 0.75cm,
  ]

\node[lstate] (L1)               {L};
\node[lstate,below= of L1] (L2)  {};
\node[lstate,below= of L2] (L3)  {};
\node[lstate,above left=0.25cm and 1cm of L3] (L4a)  {};
\node[lstate,below left=0.25cm and 1cm of L3] (L4b)  {};

\path (L1) edge node[anode,auto] {$l$}(L2);
\path (L2) edge[loop left] node[anode,auto] {$l,\ACT{l}{1}$}(L1);

\path (L2) edge[ {Triangle}-{Triangle} ] node[anode,auto,swap] {$l$} (L3);

\path (L3) edge node[anode,below,pos=0.75] {$\ACT{l}{1}$}(L4a);
\path (L3) edge node[anode,above,pos=0.75] {$\ACT{l}{1}$}(L4b);

\draw[lcol] (L4a) -- ([shift={(-0.50,0)}]L4a.center)  |- node[anode,right, yshift=-1cm] {$\ACT{l}{2}$}  (L1.south west);
\draw[lcol] (L4b) -- ([shift={(-0.75,0)}]L4b.center)  |- node[anode,right, yshift=-3cm] {$\ACT{l}{3}$} (L1.north west);

\node[lstate,above= of L1] (L0)  {};
\path (L1) edge node[anode,auto] {$r$}(L0);
\path (L0) edge[loop left] node[anode,auto] {$\ACT{r}{},\ACT{r}{1}$}(L0);

\node[rstate, right= 2cm of L1] (R1)               {R};
\node[rstate,below= of R1] (R2)  {};
\node[rstate,below= of R2] (R3)  {};
\node[rstate,above right=0.25cm and 1cm of R3] (R4a)  {};
\node[rstate,below right=0.25cm and 1cm of R3] (R4b)  {};

\path (R1) edge node[anode,auto] {$r$}(R2);
\path (R2) edge[loop right] node[anode,auto] {$r,\ACT{r}{1}$}(R1);

\path (R2) edge[ {Triangle}-{Triangle} ] node[anode,auto,swap] {$r$} (R3);

\path (R3) edge node[anode,below,pos=0.75] {$\ACT{r}{1}$}(R4a);
\path (R3) edge node[anode,above,pos=0.75] {$\ACT{r}{1}$}(R4b);

\draw[rcol] (R4a) -- ([shift={(0.50,0)}]R4a.center)  |- node[anode,left, yshift=-1cm] {$\ACT{r}{2}$} (R1.south east);
\draw[rcol] (R4b) -- ([shift={(0.75,0)}]R4b.center)  |- node[anode,left, yshift=-3cm] {$\ACT{r}{3}$} (R1.north east);

\node[rstate,above= of R1] (R0)  {};
\path (R1) edge node[anode,auto] {$l$}(R0);
\path (R0) edge[loop right] node[anode,auto] {$\ACT{l}{},\ACT{l}{1}$}(R0);

\path[lcol] (R0) edge node[anode,above,pos=0.2,rotate=25] {$\ACT{l}{2},\ACT{l}{3}$}(L1);
\path[rcol] (L0) edge node[anode,above,pos=0.2,rotate=-25 ] {$\ACT{r}{2},\ACT{r}{3}$}(R1);
\path[rcol] (R2) edge node[anode,below,pos=0.2,rotate=-25] {$\ACT{r}{2},\ACT{r}{3}$}(L1);
\path[lcol] (L2) edge node[anode,below,pos=0.2,rotate=25 ] {$\ACT{l}{2},\ACT{l}{3}$}(R1);

\end{tikzpicture}
\caption{An automaton where \Laetitia can synchronise any finite number of tokens but no deterministic, $k$-definable winning strategy exists.
}
\label{fig:01w}
\end{figure}

Consider now the example in \cref{fig:01w}. 
We will set it up so that
initially, all tokens are randomly distributed onto states $L$ and $R$, and \Laetitia must eventually place all of them on only one side of the graph. To do so,
\newcommand{\winl}{\texttt{win}_l}
\newcommand{\winr}{\texttt{win}_r}
\begin{itemize}
    \item add an initial action which moves tokens from an initial state to $L$ and $R$
        and is daemonic everywhere else;
    \item add a fresh winning action that is angelic for all red states and daemonic for all green states; and
    \item add a fresh winning action that is angelic for all green states and daemonic for all red states.
\end{itemize}

\subparagraph*{Idea}
Each round starts with all tokens on $L$ and $R$.
\Laetitia stepwise proposes a sequence of actions,
either in $l^+ l_1[l_2l_3]$ or $r^+ r_1[r_2r_3]$.
Notice that until one side is empty, these are the only safe sequences to play.
At the end of each round, all tokens (except possibly one) will switch sides. \Laetitia can choose to isolate one token and keep it on its side, thereby getting closer to her goal of moving everyone to one side.
The relevant decisions to make are

\begin{enumerate}
    \item whether to go left or right at the start of a round
    \item when to stop playing $l$ (or $r$, resp.)
\end{enumerate}

We say that two configurations $\gamma_1, \gamma_2$ are $K$-equivalent if for all states $s$, they either have the same number of tokens in $s$ or both have at least $K$ tokens in $s$. 
A strategy is $K$-deterministic if it ``counts up to $K$''. That is, it is deterministic and for all $K$-equivalent $\gamma_1, \gamma_2$, it plays the same action from both.

\begin{lemma}
The example is a positive instance of the \problem.
However, for every $K\ge 0$, every  $K$-deterministic strategy is losing.
\end{lemma}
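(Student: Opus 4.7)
The plan is to prove the two claims separately; both hinge on a ``round'' structure of safe plays in the butterfly.

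\textbf{Positive instance.} I describe a randomized $O(1)$-definable winning strategy that proceeds in rounds, each starting from the symbolic configuration $(L = \omega, R = \omega)$. In that configuration the strategy randomizes uniformly between an \emph{$L$-round}, which plays the sequence $l^+ l_1 \{l_2 \text{ or } l_3\}$, and a symmetric \emph{$R$-round}. In an $L$-round, the strategy plays $l$ until the symbolic configuration shows exactly one token on $L_3$—almost surely reached by recurrence of the finite irreducible Markov chain induced by $l$ on the $L$-side states—then plays $l_1$ to place that single token uniformly at random on $L_{4a}$ or $L_{4b}$, and finally chooses $l_2$ or $l_3$ according to the symbolic outcome to route the isolated token safely back to $L$. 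A successful $L$-round deterministically transforms $(n_L, n_R) \mapsto (n_R + 1, n_L - 1)$ and symmetrically an $R$-round gives $(n_R - 1, n_L + 1)$. Composing two consecutive rounds, $n_L$ shifts by $-2$ on $LR$ and by $+2$ on $RL$, so under uniform randomization between round-types the sequence $n_L$ performs a symmetric random walk on $\{0, 1, \dots, n\}$ with steps in $\{-2, 0, +2\}$. By recurrence, $n_L$ almost surely reaches $0$ or $n$, after which the strategy safely plays $\winr$ or $\winl$ to synchronize.

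\textbf{Lower bound.} Fix $k \geq 0$ and take $n$ sufficiently large (say $n \geq 2k + 4$). With constant probability, after the initial action distributing tokens, both $n_L, n_R > k$, so the symbolic configuration is $(L = \omega, R = \omega)$; I argue no deterministic $k$-definable strategy $\sigma$ can win from there. In $(L = \omega, R = \omega)$ the only safe actions are $l$ and $r$ (all others are daemonic for $L$ or $R$), and by the left--right symmetry of the automaton I may assume without loss of generality that $\sigma$ plays $l$. A case analysis on the symbolic configurations reachable under $\sigma$ shows that the only safe way back to $(L = \omega, R = \omega)$ is via a successful $L$-round as above: the only safe moment to play $l_1$ is when $L_1 = 0$ and $L_3 = 1$ symbolically, and the only safe resolution of the $l_2$/$l_3$ choice is to follow the symbolic split on $L_{4a}$ vs $L_{4b}$. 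A successful $L$-round updates $(n_L, n_R) \mapsto (n_R + 1, n_L - 1)$, still symbolically $(L = \omega, R = \omega)$ provided $n_L, n_R > k + 1$. Hence under $\sigma$ the numeric configuration oscillates between $(n_L, n_R)$ and $(n_R + 1, n_L - 1)$ forever, neither side ever dropping to $0$; consequently $\winl$ and $\winr$ are never safe, the target is never reached, and $\sigma$ is not almost-surely winning.

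\textbf{Main obstacle.} The delicate point is confirming that $\sigma$ cannot exploit low-probability symbolic excursions (for example, reaching $(L_3 = \omega, L_2 = 0, R_0 = \omega)$ by a rare event during the $l$-phase) to synchronize in one shot. A deterministic choice to play $l_1$ at such a configuration lands, with probability $1 - 2^{1-n}$, in the symbolic dead-end $(L_{4a} = \omega, L_{4b} = \omega)$ from which both $l_2$ and $l_3$ are daemonic; hence any such attempt carries positive probability of catastrophic loss and cannot be part of an almost-sure winning play. The only fully safe deterministic plays are the $L$- and $R$-rounds above, which, absent randomization between them, produce the oscillation just described and never synchronize.
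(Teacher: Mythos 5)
Your proof is correct, and the overall architecture (analyze rounds of the butterfly; show a round deterministically swaps $(n_L,n_R)\mapsto(n_R+1,n_L-1)$; show a deterministic $k$-definable strategy must repeat the same round-type at $(L=\omega,R=\omega)$ and hence cycles) matches the paper. The interesting divergence is in the \emph{positive} direction. The paper witnesses the positive instance with the strategy ``always attack the smaller side,'' whose imbalance $|n_L-n_R|$ grows by $2$ each round until one side empties; this is short, but the strategy is not $k$-definable (it compares exact counts). You instead give a \emph{randomized, $O(1)$-definable} strategy and argue by recurrence of the induced random walk on $n_L$. This is more work but buys something: it directly exhibits a randomized strategy of the form promised by Theorem~\ref{thm:cortosconj}, rather than relying on the main theorem to justify it afterwards, and it underlines that randomization between $L$- and $R$-rounds is exactly what a bounded-definability strategy needs.

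Two small things worth tightening. First, the random walk on $n_L$ preserves parity under steps in $\{-2,0,+2\}$, so ``almost surely reaches $0$ or $n$'' should really be ``almost surely reaches $\{0,1\}$ or $\{n-1,n\}$'' (from $n_L=1$ an $L$-round yields $(n,0)$, from $n_L=n-1$ an $R$-round yields $(0,n)$, so the finish is still one round away). Second, in the lower bound, the assertion that ``the only safe way back to $(L=\omega,R=\omega)$ is via a successful $L$-round'' is slightly too strong: a deterministic strategy can also safely abort by playing $l_2$ or $l_3$ once $L_3$ is empty, performing a ``swap-round'' $(n_L,n_R)\mapsto(n_R,n_L)$ that makes no progress. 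This does not affect the conclusion --- all safe deterministic round schedules keep both sides above $k$ and never enable $\winl$ or $\winr$ --- but the case analysis should acknowledge the swap-round (and the possibility of looping on $l$ forever) rather than claim uniqueness of the $L$-round. With those clarifications, the proof is sound and in the lower bound essentially identical in spirit to the paper's.
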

\begin{proof}
    Referring to the relevant decisions above,
    a winning strategy is to 
    \begin{itemize}
    	\item always pick the side with the least number of tokens (one can also pick the side at random), and
    	
    	\item play $l$ (or $r$) until exactly one token is isolated.
    \end{itemize}

    Consider a $K$-deterministic strategy.
    Say we start with $2K+2$ tokens, and $K+1$ tokens end up on both $L$ and $R$.
    Our strategy must play $l$ or $r$, say it plays $l$.
    We have $K+1$ tokens in the state below $L$ and in the one above $R$.
    Playing $l_2$ or $l_3$ brings us back to the previous configuration (up to renaming tokens), and playing $l_1$ has no effect, thus it must play $l$.
    Once this is done, it must keep playing $l$ until exactly one token is in the lower state: if all tokens are in the upper state we have established that it plays $l$, and if there are $2$ or more tokens below, every move is losing except $l$ (playing $l_1$ risks tokens splitting between the two left states, after which we are stuck).
    Therefore, the strategy must wait until one token is in the lower state, then play $l_1$ and then $l_2$ or $l_3$.
    
    $L$ now contains $K+2$ tokens and $R$ contains $K$, which is $K$-equivalent to $K+1$ and $K+1$. By repeating the same reasoning, in the next round the strategy does the same thing, which yields a configuration with $K+1$ tokens in both $L$ and $R$, meaning we are back to the initial configuration.
\end{proof}

\subsection{A Chain of Bottlenecks}
\label{app:example-chain-of-bottlenecks}

In the previous section we saw that there are positive instances of the \problem{} on which $K$-deterministic strategies do not suffice for any $K$.
By contrast, a central result of this work, \cref{thm:cortosconj}, implies that a randomised variant of these strategies suffices. Namely, for a positive instance of the \problem{} with $K$ many states, there exists a \emph{$K$-definable strategy}: one that gives the same distribution over actions from all configurations that are $K$-equivalent.
Here, we show that
for every $K$, one can construct a positive instance of the population control problem{} in which all $<K$-definable strategies are not winning.
Our construction to show this uses the bottleneck gadgets of Figure~\ref{fig:bottlenecks}, in a way that prevents winning strategies that are $1$-definable.

We present gadgets that we call \emph{bottlenecks} with a fixed capacity $k$.
These are designed so that \Laetitia can ``pass through'' up to $k$ tokens, but not more, meaning that each such gadget is a negative instance of the random population control problem.

\begin{figure}[ht]
\begin{subfigure}[t]{0.5\textwidth}
\begin{tikzpicture}[
		node distance=0.5cm and 1.25cm,
	]
	\path[use as bounding box] (-1,-2) rectangle (5.5,2);

	\node[state, initial] (A)   {$s$};
	\node[astate,above right= of A] (B)  {~~~};
	\node[astate,below right= of A] (C) {~~~};
	\node[state,above right= of C] (D)  {~~~};
	\node[state,accepting, right= of D] (E)  {$t$};

	\draw (A) edge node[] {$b$} (B);
	\draw (A) edge node[swap] {$b$} (C);
	
	\draw (B) edge node[] {$d$} (D);
	\draw (C) edge node[swap] {$c$} (D);
	\draw (D) edge node[] {$e$} (E);
	
	\draw[draw=none,loop above] (B) edge[draw=none] (B);
	\draw[draw=none,loop below] (C) edge[draw=none] (C);
\end{tikzpicture}
\caption{A bottleneck with capacity $1$.}
\end{subfigure}
    \hfill
\begin{subfigure}[t]{0.5\textwidth}

\hfill
\begin{tikzpicture}[
		node distance=0.5cm and 1.25cm,
	]

	\path[use as bounding box] (-1,-2) rectangle (5.5,2);

	\node[state, initial] (A)   {$s$};
	\node[astate,above right= of A] (B)  {$l$};
	\node[astate,below right= of A] (C) {$r$};
	\node[state,above right= of C] (D)  {$m$};
	\node[state,accepting, right= of D] (E)  {$t$};

	\draw[->](A) edge node {$b$} (B);
	\draw[->](A) edge node[swap] {$b$} (C);

	\draw[->](B) edge[bend right=10] node[swap] {$a$} (C);
	\draw[->](C) edge[bend right=10] node[swap] {$a$} (B);
	\draw (B) edge[loop above] node {$a$} (B);
	\draw (C) edge[loop below] node {$a$} (C);

	\draw (B) edge[col1] node[] {$\le k$} (D);
	\draw (C) edge[col2] node[swap] {$\le k$} (D);
	\draw (D) edge node[swap] {$e$} (E);

\end{tikzpicture}
\caption{A bottleneck with capacity $2k$.}
\end{subfigure}
\caption{Depiction of bottleneck gadgets using actions in $\Sigma=\{a,b,c,d,e\}$. Each starts in a state $s$ and action $b$, and ends in a target state $t$ with action $e$.}\label{fig:bottlenecks}
\end{figure}

\subparagraph*{Bottlenecks}
Consider first the automaton in Figure~\ref{fig:bottlenecks}(a).
One readily sees that it is a negative instance of the \problem{}. \Laetitia can however safely move one token from start $s$ to the target $t$.

Figure~\ref{fig:bottlenecks}(b) shows how to construct bottlenecks of arbitrary finite capacity.
To do this, we replace the red (blue) edge by a bottleneck of capacity $k$
so that 1) all its actions are distinct to, and ignored outside of, that red (blue) gadget.
2) the global actions $a,b,e$ are daemonic for all but the last state of the red (blue) gadget.
For instance, for $k=2$, both red and blue edges are replaced by disjoint copies of the capacity-1 bottleneck in Figure~\ref{fig:bottlenecks}(a).

\begin{lemma}
    \label{lem:bottleneck-K}
    \Laetitia can move $2k$ tokens from $s$ to $t$
    in a bottleneck of capacity $2k$, but not with a $(k-1)$-definable strategy.
\end{lemma}
\begin{proof}
She can do so by playing $b$ and then repeating action $a$ until exactly $k$ tokens reside on states $l$ and $r$, respectively. She can then use the actions in the red (blue) gadgets to move these tokens to the central state $m$. Only then is it safe to play the action $e$ and move them all to the target.

However, with a $(k-1)$-definable strategy, \Laetitia cannot distinguish configurations with $k$ tokens on each side from the ones with $k+1$ on a side and $k-1$ on the other.
Therefore, she either 
risks entering a $k$-bottleneck with $k+1$ tokens
or continues to play the action $a$ indefinitely and keeps all tokens in states $l$ or $r$. Both cases are clearly losing.
\end{proof}


\subparagraph*{A length-$K$ chain of bottlenecks with capacity $1$}
We join $K$ copies of the gadget depicted in Figure~\ref{fig:bad-ex-chain} (without the red arrows for now)
so that for all $1\le n \le K$, the right-most state of the $n$-th copy is the left-most state of the $(n+1)$-th copy.
The initial state is $q_1$, the start of the first copy at level one, and the ultimate target state is $q_{K+1}$, the last state in the $K$-th copy.
Notice that each member of the chain is an instance of Figure~\ref{fig:force-one} followed by a bottleneck of capacity one (Figure~\ref{fig:bottlenecks}(a)), and operates on its own alphabet $\Sigma_n$ of actions.
We further impose the following constraints.
\begin{enumerate}
    \item 
       In all states with index $n$, all actions of index $<n$ are ignored
        and all actions of index $>n$ are daemonic.
    \item In states $q_n$, action $b_n$ 
        lead back to $q_1$ (indicated by the blue arrow).
\end{enumerate}

\begin{figure}[ht]
	\centering
	\begin{tikzpicture}[
		node distance=0.5cm and 1.5cm,
	]

	\node[estate, initial] (Q) {$q_n$};
	\node[state, right= of Q] (A)  {$s_n$};
	\node[astate,above right= of A] (B)  {$l_n$};
	\node[astate,below right= of A] (C) {$r_n$};
	\node[estate,above right= of C] (D)  {$q_{n+1}$};

	\draw (Q) edge[loop above] node {$\Sigma_n$} (Q);
	\draw[] (Q) edge[bend right=20] node[swap] {$a_n$} (A);
	\draw[] (A) edge[bend right=20] node[swap] {$a_n$} (Q);

	\draw (A) edge node[] {$b_n$} (B);
	\draw (A) edge node[swap] {$b_n$} (C);
	\draw (B) edge node[] {$d_n$} (D);
	\draw (C) edge node[swap] {$c_n$} (D);
	
	\draw (Q) edge[col2] node[left] {$b_{n}$} ++(0,-1.5);
	\draw (B) edge[col1, bend left] node[yshift=-5mm] {$a_{1}$} ++(0,-3.3);
	\draw (C) edge[col1, bend left=20] node[left] {} ++(-0.3,-1.3);
\end{tikzpicture}
	\caption{A member of the chain}\label{fig:bad-ex-chain}
\end{figure}


Notice that, when moving a large number of tokens from $q_0$ to $q_{K}$,
\Laetitia will reach at least one configuration with exactly one token in $\set{l_i,r_i}$ for $1\le i \le K$.
Indeed, to move a token to the target $q_{K+1}$, she needs to play action $b_{K}$ at some point, which is safe only if there is at most one token in $s_{K}$.
To make tokens reach $q_{K+1}$, she must eventually play $b_{K}$ with exactly one token in $s_{K}$.
Since $b_{K}$ is daemonic for all states of index $<K$, before playing $b_{K}$ for the first time, all other tokens must be in $q_{K}$, and are thus sent to $q_1$ when $b_{K}$ is played.

Now, in order to move the isolated token to $q_{K+1}$, \Laetitia must bring all tokens to $q_{K}$ once again, while the isolated token is waiting in $l_{K}$ or $r_{K}$.
By repeating these steps, we conclude that there must be a point at which exactly one token is in $\set{l_i,r_i}$ for $1\le i \le K$

\subparagraph*{A Leaky Chain}
Let $k \in \NN$. 
We construct an instance where \Laetitia has a winning strategy for any number of tokens $N$, but cannot win with a $k-1$-definable strategy.
We describe the full system now, which has two disjoint parts. 

The first part of the construction is a bottleneck of capacity $K$ with $K \ge 2k$.
The second is the chain of $K$ many bottlenecks of capacity $1$ as described just above.

This bottleneck of capacity $K$ is disjoint from, 
and its actions are ignored throughout, the chain of bottlenecks.
The purpose of the capacity-$K$ bottleneck is to recover up to $K$ tokens and move them back to the initial state $q_1$ in the chain.
The final part in our construction is to add edges from all states $l_n$ and $r_n$ to the initial state of the capacity-$K$ bottleneck. This happens on action $a_1$ and is indicated by the red arrows in Figure~\ref{fig:bad-ex-chain}.

We also add a reset action that sends all states of the chain to the initial one, as well as the final state of the $K$-bottleneck, but is daemonic for all other states of the $K$-bottleneck.

We summarize in the following lemma.

\begin{lemma}
    The system constructed above is a positive instance of the population control problem: For every $N\in\NN$, \Laetitia has a strategy to almost surely move $N$ tokens from $q_1$ to $q_{K+1}$.
    For $N\ge k$, every $(k-1)$-definable strategy is losing.
\end{lemma}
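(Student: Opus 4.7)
My plan is to prove the two parts separately: almost-sure winnability for every $N$, and infeasibility of $(K-1)$-definable strategies for $N \geq K$.

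For the positive direction, I would exhibit a winning strategy that pushes tokens through the chain one at a time, falling back on the recovery bottleneck whenever strays accumulate. Starting from $N$ tokens at $q_1$, the strategy proceeds in rounds. At each round it attempts to isolate exactly one token at $s_1$ using the $a_1$-loop (as in Example~\ref{ex:force-one}), advance it through gadget~1 via $b_1$ and the appropriate continuation to $c_1$ and $e_1$, and then recursively push it through each subsequent gadget. If the random outcome scatters tokens onto multiple $c_i$ states, the strategy plays $e_0$, which by construction routes the (at most $K-1$) strays into the capacity-$K$ recovery bottleneck (safe by Lemma~\ref{lem:bottleneck-K}) and back to $q_1$. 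Almost-sure success then follows from standard finite-chain martingale arguments: every round succeeds with probability bounded below, and the count at $q_1$ is a non-increasing potential that strictly decreases on success.

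For the hardness direction, I rely on the fundamental property that a $(K-1)$-definable strategy must assign the same distribution over actions to any two configurations that coincide after truncating all token counts at $K-1$. I plan to construct a family of configurations reachable under any candidate strategy $\sigma$ that all share the same $(K-1)$-abstraction but demand mutually incompatible responses for safety. The crucial insight is that progress through the chain of $K$ bottlenecks requires distinguishing $K$ distinct phases of advancement, and the count at $q_1$ (which drops as tokens progress) is the natural indicator of phase; however, $(K-1)$-definability collapses all $q_1$-counts at least $K-1$ into a single abstract value $\omega$, while the remaining states along the chain carry counts too small to encode the missing phase information. Exploiting the back-edges $b_m$ at $q_n$ (which ensure that misplayed actions reset progress to $q_1$), I would identify two reachable configurations with identical $(K-1)$-abstractions in which the unique safe progress-making action differs, forcing $\sigma$ to play unsafely in at least one.

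The main obstacle is this hardness proof, specifically the construction of the witness pair of $(K-1)$-indistinguishable configurations with incompatible safe responses. The interaction between the back-edges, the recovery mechanism, and the chain structure generates a complicated reachability graph, and identifying the right pair requires understanding precisely which configurations the strategy can steer into under all random branchings. I would attack this by induction on $K$: for the base case $K=1$ the statement reduces to observing that a $0$-definable strategy (which sees only \emph{occupied} versus \emph{unoccupied}) cannot safely traverse a single capacity-$1$ bottleneck; for the inductive step, I would argue that restricting a purported winning $(K-1)$-definable strategy on the depth-$K$ chain to the embedded sub-chain of depth $K-1$ yields a $(K-2)$-definable winning strategy on the shorter instance, contradicting the inductive hypothesis.
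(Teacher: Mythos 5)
Your positive-direction plan matches the paper: push one token through the chain at a time, and on a bad branch funnel strays into the capacity-$K$ recovery bottleneck and cycle them back to $q_1$; the paper phrases the almost-sure convergence the same way. No issue there.

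Your negative direction, however, departs from the paper and leaves a real gap. The paper's argument is short but structurally pointed: it observes that along \emph{any} play a strategy is eventually forced to route a batch of exactly $K$ tokens into the capacity-$K$ recovery bottleneck, and then invokes Lemma~\ref{lem:bottleneck-K}, whose proof shows that moving $2k$ tokens through a capacity-$2k$ bottleneck requires detecting precise equal splits at every recursive level — precisely the kind of counting a strategy that collapses counts above $K-1$ cannot perform. Your witness-pair plan locates the obstruction at the back-edges $b_m$ of the chain and the count at $q_1$, which is the wrong place: when $N\gg K$ the count at $q_1$ is permanently abstracted to $\omega$ anyway and so provides no discriminating information either to Controller or to the adversary, and the genuinely count-sensitive decisions happen inside the recovery gadget, not on the back-edges. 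Your inductive alternative also does not go through as stated: a $(K-1)$-definable strategy on the depth-$K$ automaton does not restrict to a $(K-2)$-definable strategy on the depth-$(K-1)$ automaton, because the two instances have different state sets, different recovery-bottleneck capacities ($K$ vs.\ $K-1$), and the sub-chain is not an induced sub-MDP on which the bigger strategy is even defined; moreover the definability parameter ($K-1$ vs.\ $K-2$) would need an argument to drop that you have not supplied. You acknowledge the hardness half is the ``main obstacle,'' so you are aware of the gap — but the missing idea is specifically the forced passage through the capacity-$K$ recovery bottleneck together with Lemma~\ref{lem:bottleneck-K}, not the witness-pair or sub-chain-induction routes you sketch.
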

\begin{proof}
    \Laetitia can win by applying the following strategy while all tokens are in the chain of bottlenecks: push one token into a chain $i$, have it wait in state $l_i$ or $r_i$ while all tokens on lower-indexed states move to $q_i$, then move it out. This proceeds recursively and ultimately moves all tokens to $q_{K+1}$.
    
    What may happen is that when playing an action some tokens from states $l_i, r_i$ fall in the $K$-bottleneck.
    There may be up to $K$ tokens falling simultaneously, as explained when describing the chain, but not more since our strategy allows at most one token at a time in each $\set{l_i, r_i}$.
    \Laetitia's only choice is then to get those tokens to the final state of the $K$-bottleneck, play the reset action and start over.
	This is a winning strategy.
	However, if \Laetitia uses a $(k-1)$-definable strategy then she cannot win almost surely when $K \ge 2k+2$ tokens enter the $K$-bottleneck, by \cref{lem:bottleneck-K}.
	Furthermore, she cannot win without a positive probability to arrive in this situation, as explained when describing the chain, hence there is no $(k-1)$-definable winning strategy.
\end{proof}

\subsection{A Double-exponential Cut-off}
\label{app:example-2exp-cutoff}

The \intro{cut-off} of an MDP is the minimal number of tokens against which Controller does not have an almost-sure winning strategy, 
In this section we show that there are MDPs whose "cut-off" is finite but doubly-exponential in the number of states. 
We reuse a construction from~\cite{BertrandDGG17}. We simply replace the adversary with randomisation.
Let us summarise it here.

Let $N \in \NN$. We build an MDP $\?M_N$ as follows: 
We have an initial state $i$ and a final state $f$, plus a sink state $\mathtt{sink}$.

\newcommand{\start}{\mathtt{start}}
\newcommand{\sink}{\mathtt{sink}}
\newcommand{\leftpart}{\mathtt{left}}
\newcommand{\rightpart}{\mathtt{right}}
\renewcommand{\dec}{\mathtt{dec}}
\newcommand{\stopcount}{\mathtt{stop}}

\begin{itemize}
	\item First of all we have a \emph{splitting gadget}, as shown in Figure~\ref{fig:splitting}. 
	This gadget forces Controller to play a word of $((\Sigma_N\setminus\set{l,r})\set{l,r})^*$ until all three states are empty. We call a sequence of two actions of $(\Sigma_N\setminus\set{l,r})\set{l,r}$ a \emph{round}.
	Say there are $2^K$ tokens in $\mathtt{s}$ at the start, then there is a positive probability that they distribute evenly between $\leftpart$ and $\rightpart$ whenever they split from $\mathtt{s}$.
	Therefore, for any strategy of Controller, there is a positive probability that after $K$ rounds there is still at least one token in $\mathtt{s}$.
	
	\item Second, we have a \emph{counter gadget}, already used in Section~\ref{sec:app-lower-bound}, as shown in Figure~\ref{fig:counter}. 
	This one consists of $2N$ states, representing $N$ bits, $\set{(i:0), (i:1) \mid 0\leq i \leq N-1}$.
	For all $0 \leq j \leq N-1$ there is an action $\dec_j$ that fixes all states $(k:0), (k:1)$ with $k>j$, sends $(j:1)$ to $(j:0)$, and sends each $(k:0)$ with $k<j$ to $(k:1)$. The tokens in states $(k:1)$  with $k<j$ and $(j:0)$ are all sent to $\sink$.
	Action $\stopcount$ sends all tokens in $(i:0)_{1\leq i \leq N}$ to $f$ and all tokens in $(i:1)_{1\leq i \leq N}$ to $\sink$.
	Actions $l,r$ leave the tokens in those states idle.
	This gadget implements a binary counter over $N$ bits. Let $\mathtt{Dec} = \set{\dec_j \mid 1 \leq j \leq N}$.
	If we start with tokens in states $(i:0)_{1\leq i \leq N}$ and not in states $(i:1)_{1\leq i \leq N}$, then at all times Controller has only one action from $\mathtt{Dec}$ available, which increments the counter. 
	Once every bit is $1$, the only possible action apart from $\set{l,r}$ is $\stopcount$.
	She is forced to play a sequence of actions of $\mathtt{Dec}^{2^N-1}\stopcount$, interleaved with some $l$ and $r$.
	
	\item We add an action $\start$ which sends tokens from $i$ to $\set{\mathtt{s}} \cup \set{0_i \mid 1 \leq i \leq N}$, and leaves all other tokens idle. All other actions map $i$ to $\sink$.
\end{itemize}

\begin{figure}
	\begin{center}
	\begin{tikzpicture}[
		node distance=0.5cm and 1.5cm,
		]
		
		\node[astate, initial above] (A) {$s$};
		\node[astate,below left= of A] (B)  {\footnotesize$~\mathtt{left}~$};
		\node[astate,below right= of A] (C) {\footnotesize$\mathtt{right}$};
		\node[state,below right= of B] (D) {$f$};

		\draw (A) edge[bend right=20] node[above left=1mm] {$\Sigma\setminus\set{l,r,\mathtt{stop}}$} (B);
		\draw (A) edge[bend left= 20] node[above right=1mm] {$\Sigma\setminus\set{l,r,\mathtt{stop}}$} (C);
		\draw (B) edge[bend right= 20] node[below] {$l$} (D);
		\draw (C) edge[bend left= 20] node[below] {$r$} (D);
		\draw (B) edge[bend right= 20] node[swap] {$l$} (A);
		\draw (C) edge[bend left= 20] node[] {$r$} (A);
		\draw (D) edge[loop above] node[] {$\Sigma$} (D);
	\end{tikzpicture}
	\end{center}
	\caption{The splitting gadget. The action $\mathtt{stop}$ sends all tokens in $s$, $\mathtt{left}$ and $\mathtt{right}$ to a sink state.}
	\label{fig:splitting}
\end{figure}

If there are more than $2^{2^N} + N$ tokens, then, with positive probability, when playing $\start$ one goes to each of the $(i:0)_{1\leq i \leq N}$ and the rest to $\mathtt{s}$. 
Then, as observed above, with positive probability the first gadget forces Controller to play at least $2^N$ actions from $\Sigma_N\setminus \set{l,r}$ before being able to play $\stopcount$. However, the second gadget forces Controller to play $\stopcount$ after playing $2^N$ actions from $\Sigma\setminus \set{l,r, \stopcount}$. Hence Controller cannot win almost surely.

If there are fewer than $2^{2^N}$ tokens, then Controller can play as follows: Play $\start$, then alternate between playing the available $\dec_j$ (it may be that several actions are available because some $(i:0)$ did not receive any tokens in the first step; in that case, choose the minimal $j$) and playing either $l$ or $r$ (whichever sends the most tokens to $f$).
After $2^N$ steps, all tokens in the counter gadget are in the states $(i:1)_{1 \leq i \leq N}$.
Since at every round we send at least half of the tokens in the splitting gadget to $f$, after $2^N$ rounds the gadget is empty. Controller can then play $\stopcount$ safely, and send all tokens in the counter gadget to $f$.

We obtain an MDP with $2N + 6$ states whose "cut-off" is between $2^{2^N}$ and $2^{2^N}+N$.

\end{document}